\newtheorem{theorem}{Theorem}[section]
\newtheorem{corollary}[theorem]{Corollary}
\newtheorem{definition}[theorem]{Definition}
\newtheorem{lemma}[theorem]{Lemma}
\newtheorem{proposition}[theorem]{Proposition}
\newtheorem{remark}[theorem]{Remark}
\newenvironment{proof}[1][Proof]{\textbf{#1.} }{\hfill\rule{0.5em}{0.5em}}
{\catcode`\@=11\global\let\AddToReset=\@addtoreset
\AddToReset{equation}{section}

\AddToReset{theorem}{section}

\def\nc{\newcommand}

\def\om{\omega}

 \def\Om{\Omega}

\nc\pa{\partial}

\nc\CC{\mathbb{C}}
\nc\RR{\mathbb{R}}
\nc\QQ{\mathbb{Q}}
\nc\ZZ{\mathbb{Z}}
\nc\NN{\mathbb{N}}

\begin{document}
\title{Good-$\lambda$  and Muckenhoupt-Wheeden type bounds in quasilinear measure datum problems, with applications}
	\author{
	{\bf Quoc-Hung Nguyen\thanks{E-mail address: quochung.nguyen@sns.it, Scuola Normale Superiore, Centro Ennio de Giorgi, Piazza dei Cavalieri 3, I-56100
			Pisa, Italy.}~~and~Nguyen Cong Phuc\thanks{E-mail address: pcnguyen@math.lsu.edu, Department of Mathematics, Louisiana State University, 303 Lockett Hall,
			Baton Rouge, LA 70803, USA. }}}
\date{}  
\maketitle
\begin{abstract}
Weighted good-$\lambda$ type inequalities  and  Muckenhoupt-Wheeden type bounds 
are obtained for gradients of solutions to a class of  quasilinear elliptic equations
with measure data. 
Such results are obtained globally  over sufficiently flat domains in $\RR^n$ in the sense of Reifenberg. The principal operator here is modeled after the $p$-Laplacian, where for the first time singular case $\frac{3n-2}{2n-1}<p\leq 2-\frac{1}{n}$ is considered.
Those bounds lead to
useful compactness criteria for  solution sets of quasilinear elliptic equations
with measure data. As an application, sharp existence results and sharp bounds on the size of removable singular sets  
are deduced for a quasilinear Riccati type equation having a gradient source term with linear or super-linear power  growth.

\medskip

\medskip

\medskip

\noindent MSC2010: primary: 35J60, 35J61, 35J62; secondary: 35J75, 42B37.

\medskip

\noindent Keywords: quasilinear equation; Riccati type equation; measure data; good-$\lambda$  inequality; Muckenhoupt-Wheeden type inequality; weighted norm inequality; capacity.
\end{abstract}   
                  
\tableofcontents
									
 \section{Introduction and main results} 
 In this article, we are concerned with  global weighted gradient estimates for quasilinear elliptic equations
with measure data. Such estimates are then applied to address the question of sharp existence and removable singularities  for a 
quasilinear equation with strong power growth in the gradient known as an  equation of Riccati type.

In particular, our first goal is to obtain `good-$\lambda$' type bounds   and  nonlinear 
Muckenhoupt-Wheeden type inequalities  for gradients of  solutions to  quasilinear elliptic equations with measure data:   
\begin{eqnarray}\label{5hh070120148}
\left\{ \begin{array}{rcl}
 -{\rm div}(A(x, \nabla u))&=& \mu \quad \text{in} ~\Omega, \\
u&=&0  \quad \text{on}~ \partial \Omega.
\end{array}\right.
\end{eqnarray} 																																						
Here  $\Omega$ is a bounded open subset of $\mathbb{R}^{n}$, $n\geq2$, and $\mu$ is  a finite signed Radon  in $\Omega$.

Our second goal is to employ those estimates to study  a quasilinear Riccati type equation with measure data:																																					
\begin{eqnarray}\label{Riccati}
\left\{ \begin{array}{rcl}
 -{\rm div}(A(x, \nabla u))&=& |\nabla u|^q + \mu \quad \text{in} ~\Omega, \\
u&=&0  \quad ~~~~~~~~~~~\text{on}~ \partial \Omega,
\end{array}\right.
\end{eqnarray} 																																						
 and removable singularities for related  `homogeneous' equations 
$-\operatorname{div}(A(x,\nabla u))=|\nabla u|^q$, $q>0$.
In particular, we address a  question of sharp existence for \eqref{Riccati} posed by Igor E. Verbitsky (personal communication), which has also been  stated as an open problem  in \cite{VHV},
pages 13--14.

In \eqref{5hh070120148}-\eqref{Riccati} and  throughout the paper, the nonlinearity  $A:\mathbb{R}^n\times \mathbb{R}^n\to \mathbb{R}^n$ is a Carath\'eodory vector valued function, i.e., $A(x,\xi)$ is measurable in $x$ and continuous with respect to $\xi$ for a.e. $x$. Moreover, for a.e. $x$, $A(x,\xi)$ is continuously differentiable 
in $\xi$ away from the origin and																			satisfies 
                                       \begin{align}
                                       \label{condi1}| A(x,\xi)|\le \Lambda |\xi|^{p-1},\quad | \nabla_\xi A(x,\xi)|\le \Lambda |\xi|^{p-2},
                                       \end{align}
                                        \begin{align}
                                       \label{condi2}  \langle \nabla_\xi A(x,\xi)\eta,\eta\rangle\geq \Lambda^{-1} |\eta|^2|\xi|^{p-2},
                                       \end{align}
                                          for every $(\xi,\eta)\in \mathbb{R}^n\times \mathbb{R}^n\backslash\{(0,0)\}$ and a.e. $x\in \mathbb{R}^n$, where  $\Lambda$ is a  positive constant.

																					As for $p$ in \eqref{condi1}-\eqref{condi2},  in this paper, we shall restrict ourselves to the `singular' case: 
                                          \begin{align*}
                                          \frac{3n-2}{2n-1}<p\leq 2-\frac{1}{n}.
                                          \end{align*}        
However, as we remark later, all of the results obtained in this paper also hold in the `regular' case $2-\frac{1}{n}<p\leq n$.
It also makes sense to consider the case $1<p\leq \frac{3n-2}{2n-1}$. Unfortunately, our method breaks down in this case. However, some useful partial results could be obtained for this range of $p$, and they  will be presented elsewhere.

For our purpose  we also require that the nonlinearity $A$ satisfy a smallness condition of BMO type in the $x$-variable. We call such a condition the $(\delta, R_0)$-BMO condition.

\begin{definition} 
 We say that $A({x, \zeta})$ satisfies a $(\delta, R_0)$-BMO condition for some $\delta, R_0>0$  if
\begin{equation*}
[A]_{R_0}:=\mathop {\sup }\limits_{y\in \mathbb{R}^n,0<r\leq R_0}\fint_{B_r(y)}\Theta(A,B_r(y))(x)dx \leq \delta,
\end{equation*}         
where 
\begin{equation*}
\Theta(A,B_r(y))(x):=\mathop {\sup }\limits_{\zeta\in\mathbb{R}^n\backslash\{0\}}\frac{|A(x,\zeta)-\overline{A}_{B_r(y)}(\zeta)|}{|\zeta|^{p-1}},
\end{equation*}
and $\overline{A}_{B_r(y)}(\zeta)$ denotes the average of $A(\cdot,\zeta)$ over the ball $B_r(y)$, i.e.,
\begin{equation*}
\overline{A}_{B_r(y)}(\zeta):=\fint_{B_r(y)}A(x,\zeta)dx=\frac{1}{|B_r(y)|}\int_{B_r(y)}A(x,\zeta)dx.
\end{equation*}                                                              
\end{definition}

A typical example of such a nonlinearity $A$ is given by $A(x, \xi)=|\xi|^{p-2}\xi$ which gives rise to the standard $p$-Laplacian $\Delta_p u={\rm div}(|\nabla u|^{p-2}\nabla u)$. 

In the case  $p=2$, the above $(\delta, R_0)$-BMO condition 
was introduced in \cite{BW2}, whereas  such a condition for  general $p\in(1,\infty)$  appears  in the  paper \cite{Ph3}.
We remark that the  $(\delta, R_0)$-BMO condition allows $A(x, \xi)$ has discontinuity in $x$ and it can be used as an 
appropriate substitute for the Sarason \cite{Sa} VMO  condition.

Due to the global nature of our gradient estimates, we also require certain regularity on the ground domain $\Omega$.
 Namely, at each
boundary point and every scale, we ask that the boundary of $\Omega$  be trapped between two hyperplanes separated by a distance that
depends on the scale.    The following  defines the relevant geometry  precisely.
\begin{definition}
 Given $\delta\in (0, 1)$ and $R_0>0$, we say that $\Omega$ is a $(\delta, R_0)$-Reifenberg flat domain if for every $x\in \partial \Omega$
 and every $r\in (0, R_0]$, there exists a
 system of coordinates $\{ z_{1}, z_{2}, \dots,z_{n}\}$,
 which may depend on $r$ and $x$, so that  in this coordinate system $x=0$ and that
\[
B_{r}(0)\cap \{z_{n}> \delta r \} \subset B_{r}(0)\cap \Omega \subset B_{r}(0)\cap \{z_{n} > -\delta r\}.
\]
\end{definition}

For more properties of Reifenberg flat domains and their many applications, we refer to the papers \cite{HM, Jon, KT1, KT2, 
55Re, Tor}. This class of domains appeared first in a paper of Reifenberg (see \cite{55Re}) in the context of the Plateau problem. Here we remark that they include $C^1$ domains and  Lipschitz domains with sufficiently small Lipschitz constants (see \cite{Tor}).  Moreover, they also include certain domains with fractal boundaries and thus provide a wide range of applications.

  In this paper, all solutions of   \eqref{5hh070120148} and \eqref{Riccati} with a finite signed measure $\mu$ in $\Omega$ will be understood in the     renormalized sense (see \cite{11DMOP}). For $\mu\in\mathfrak{M}_b(\Omega)$ (the set of finite signed measures in $\Omega$), we will tacitly extend it by zero to $\Omega^c:=\mathbb{R}^n\setminus\Omega$.	We let $\mu^+$ and $\mu^-$  be the positive and negative parts, respectively, of a measure $\mu\in\mathfrak{M}_b(\Omega)$. We denote by $\mathfrak{M}_0(\Omega)$ the space of finite signed measures in $\Omega$ which are absolutely continuous with respect to the capacity $c^{\Omega}_{1,p}$. Here  $c^{\Omega}_{1,p}$ is the $p$-capacity defined  for each compact set $K\subset\Omega$ by
  \begin{equation*}
  c^{\Omega}_{1,p}(K)=\inf\left\{\int_{\Omega}{}|{\nabla \varphi}|^pdx:\varphi\geq \chi_K,\varphi\in C^\infty_c(\Omega)\right\},
  \end{equation*}
	where $\chi_{K}$ is the characteristic function of the set $K$.
	  We also denote by $\mathfrak{M}_s(\Omega)$ the space of finite signed measures in $\Omega$ with support on a set of zero $c^{\Omega}_{1,p}$-capacity. It is known that  any $\mu\in\mathfrak{M}_b(\Omega)$ can be written  uniquely  in the form $\mu=\mu_0+\mu_s$ where $\mu_0\in \mathfrak{M}_0(\Omega)$ and $\mu_s\in \mathfrak{M}_s(\Omega)$ (see \cite{FS}).
  It is also known  that any  $\mu_0\in \mathfrak{M}_0(\Omega)$ can be written in the form $\mu_0=f-\operatorname{div}(F)$ where $f\in L^1(\Omega)$ and $F\in L^{\frac{p}{p-1}}(\Omega,\mathbb{R}^n)$.
  
  For $k>0$, we define the usual  two-sided truncation operator $T_k$ by
	$$T_k(s)=\max\{\min\{s,k\},-k\}, \qquad s\in\mathbb{R}.$$ 
	
	For our purpose, the following notion of gradient is needed.
	If $u$ is a measurable function defined  in $\Omega$, finite a.e., such that $T_k(u)\in W^{1,p}_{loc}(\Omega)$ for any $k>0$, then there exists a measurable function $v:\Omega\to \mathbb{R}^n$ such that $\nabla T_k(u)= v \chi_{\{|u|< k\}} $ 
  a.e. in $\Omega$  for all $k>0$ (see \cite[Lemma 2.1]{bebo}). In this case, we define the gradient $\nabla u$ of $u$ by $\nabla u:=v$. It is known that  $v\in L^1_{loc}(\Omega, \mathbb{R}^n)$ if and only if  $u\in W^{1,1}_{loc}(\Omega)$ and then $v$ is the usual weak gradient of $u$. On the other hand, for $1<p\leq 2-\frac{1}{n}$, by looking at the fundamental solution we see that in general distributional solutions of \eqref{5hh070120148}  may not even belong to $u\in W^{1,1}_{loc}(\Omega)$.
	
	The notion of renormalized solutions is a generalization of that of entropy solutions introduced in \cite{bebo} and \cite{BGO}, where the right-hand side is assumed to be  in $L^1(\Omega)$ or in  $\mathfrak{M}_{0}(\Omega)$.
Several equivalent definitions of renormalized solutions
were  given  in \cite{11DMOP}. Here we use the following one:

  
  \begin{definition} \label{derenormalized} Let $\mu=\mu_0+\mu_s\in\mathfrak{M}_b(\Omega)$, with $\mu_0\in \mathfrak{M}_0(\Omega)$ and $\mu_s\in \mathfrak{M}_s(\Omega)$. A measurable  function $u$ defined in $\Omega$ and finite a.e. is called a renormalized solution of \eqref{5hh070120148}
  	if $T_k(u)\in W^{1,p}_0(\Omega)$ for any $k>0$, $|{\nabla u}|^{p-1}\in L^r(\Omega)$ for any $0<r<\frac{n}{n-1}$, and $u$ has the following additional property. For any $k>0$ there exist  nonnegative Radon measures $\lambda_k^+, \lambda_k^- \in\mathfrak{M}_0(\Omega)$ concentrated on the sets $\{u=k\}$ and $\{u=-k\}$, respectively, such that 
  	$\mu_k^+\rightarrow\mu_s^+$, $\mu_k^-\rightarrow\mu_s^-$ in the narrow topology of measures and  that
  	\[
  	\int_{\{|u|<k\}}\langle A(x,\nabla u),\nabla \varphi\rangle
  	dx=\int_{\{|u|<k\}}{\varphi d}{\mu_{0}}+\int_{\Omega}\varphi d\lambda_{k}%
  	^{+}-\int_{\Omega}\varphi d\lambda_{k}^{-},
  	\]
  	for every $\varphi\in W^{1,p}_0(\Omega)\cap L^{\infty}(\Omega)$.
  	  \end{definition}

Here we recall that a sequence  $\{\mu_{k}\} \subset
\mathfrak{M}_{b}(\Omega)$ is said to converge in the narrow topology of measures to $\mu \in
\mathfrak{M}_{b}(\Omega)$ if
$$\lim_{k\rightarrow\infty}\int_{\Omega}\varphi \, d\mu_{k}=\int_{\Omega}\varphi \,
d\mu,$$
for every bounded and continuous function $\varphi$ on $\Omega$.

It is known that if $\mu\in \mathfrak{M}_0(\Omega)$ then there is one and only one  renormalized solution of 
\eqref{5hh070120148} (see \cite{BGO, 11DMOP}). However, to the best of our knowledge, for a general $\mu\in \mathfrak{M}_b(\Omega)$ the uniqueness of renormalized solutions of \eqref{5hh070120148} is still an open problem.

In the first main result of the paper,  we are  concerned with a nonlinear Muckenhoupt and Wheeden type  bound for gradients of solutions of 
\eqref{5hh070120148} that involves the class of $\mathbf{A}_{\infty}$ weights.
We recall that   a positive function $w\in L^1_{\text{loc}}(\mathbb{R}^{n})$ is said to be an $\mathbf{A}_{\infty}$ weight if there are two positive constants $C$ and $\nu$ such that
$$w(E)\le C \left(\frac{|E|}{|B|}\right)^\nu w(B),
$$
for all balls $B=B_\rho(x)$ and all measurable subsets $E$ of $B$. The pair $(C,\nu) $ is called the $\mathbf{A}_\infty$ constants of $w$ and is denoted by $[w]_{\mathbf{A}_\infty}$.

\begin{theorem} \label{101120143-p} Let $\mu\in \mathfrak{M}_b(\Omega)$ and $\frac{3n-2}{2n-1}<p\leq 2-\frac{1}{n}$.  Let $\Phi: [0,\infty)\rightarrow [0,\infty)$ be a strictly increasing function such that $\Phi(0)=0$, $\lim_{t\rightarrow\infty}\Phi(t)=\infty$, and $\Phi$ is of moderate growth, i.e., $\Phi(2t)\leq c\, \Phi(t)$ for all $t\geq 0$ with a constant $c>1$. 
For any $w\in \mathbf{A}_{\infty}$, we can find  $\delta=\delta(n,p,\Lambda, \Phi, [w]_{\mathbf{A}_{\infty}})\in (0,1)$ such that if $\Omega$ is  $(\delta,R_0)$-Reifenberg flat  and $[A]_{R_0}\le \delta$ with some $R_0>0$ then  for any renormalized solution $u$ of \eqref{5hh070120148} we have                            
     \begin{equation}\label{101120144}
\int_{\Omega}\Phi(|\nabla u|) w(x)dx \leq C \int_{\Omega} \Phi([\mathbf{M}_1(\mu)]^{\frac{1}{p-1}}) w(x)dx.
    \end{equation} 
                                        Here $C$ depends only  on $n,p,\Lambda, \Phi, [w]_{\mathbf{A}_\infty}$, and $diam(\Omega)/R_0$.
   \end{theorem}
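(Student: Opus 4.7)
The plan is to reduce \eqref{101120144} to a weighted good-$\lambda$ inequality between the Hardy-Littlewood maximal function of $|\nabla u|^{p-1}$ and the fractional maximal $\mathbf{M}_1(\mu)$, and then integrate using the layer-cake formula together with the doubling of $\Phi$. The argument will be carried out in three stages: local comparison estimates on Whitney-type balls, a weighted good-$\lambda$ inequality obtained by combining the comparison bounds with the $\mathbf{A}_{\infty}$ property, and layer-cake integration.

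For the first stage, on each ball $B_{2r}(x_{0})$ with $2r\leq R_{0}$, I would compare $u$ first with the $A$-harmonic extension $v$ (solving $-\operatorname{div}(A(x,\nabla v))=0$ in $B_{2r}(x_{0})$ with $v-u\in W_{0}^{1,p}(B_{2r}(x_{0}))$), and then $v$ with the solution $h$ of the frozen-coefficient equation $-\operatorname{div}(\overline{A}_{B_{2r}(x_{0})}(\nabla h))=0$. Using the renormalized formulation and admissible truncations $T_{k}$, the singular range yields the $\mu$-comparison bound
\begin{equation*}
\Bigl(\fint_{B_{r}(x_{0})}|\nabla u-\nabla v|^{p-1}dx\Bigr)^{\frac{1}{p-1}}\leq C\,\frac{|\mu|(B_{2r}(x_{0}))}{r^{n-1}},
\end{equation*}
while a BMO-type perturbation exploiting $[A]_{R_{0}}\leq\delta$ together with the interior $C^{1,\alpha}$ theory for $h$ gives $\|\nabla h\|_{L^{\infty}(B_{r}(x_{0}))}^{p-1}\leq C\fint_{B_{2r}(x_{0})}|\nabla u|^{p-1}dx$ and $\fint_{B_{r}(x_{0})}|\nabla v-\nabla h|^{p-1}dx\leq C\delta^{\sigma}\fint_{B_{2r}(x_{0})}|\nabla u|^{p-1}dx$. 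Boundary analogues, valid under $(\delta,R_{0})$-Reifenberg flatness after flattening, are obtained by the same route.

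Combining these estimates with a Vitali/Calder\'on-Zygmund covering of the super-level set $\{\mathbf{M}(|\nabla u|^{p-1})>\lambda\}$ yields the density-decay inequality
\begin{equation*}
\bigl|\{\mathbf{M}(|\nabla u|^{p-1})>N\lambda,\ \mathbf{M}_{1}(\mu)\leq\epsilon\lambda^{p-1}\}\cap B\bigr|\leq C(\epsilon^{\sigma_{1}}+\delta^{\sigma_{2}})|B|
\end{equation*}
for each ball $B$ in the covering, with constants $N=N(n,p,\Lambda)$ and $\sigma_{1},\sigma_{2}>0$. The quantitative $\mathbf{A}_{\infty}$ inequality $w(E)\leq C(|E|/|B|)^{\nu}w(B)$, with $\nu>0$ depending only on $[w]_{\mathbf{A}_{\infty}}$, then promotes this to the weighted good-$\lambda$ inequality
\begin{equation*}
w\bigl(\{\mathbf{M}(|\nabla u|^{p-1})>N\lambda,\ \mathbf{M}_{1}(\mu)\leq\epsilon\lambda^{p-1}\}\bigr)\leq C(\epsilon^{\sigma_{1}\nu}+\delta^{\sigma_{2}\nu})\,w\bigl(\{\mathbf{M}(|\nabla u|^{p-1})>\lambda\}\bigr).
\end{equation*}
Applying this to $g:=\mathbf{M}(|\nabla u|^{p-1})^{1/(p-1)}$ and integrating against $d\Phi$ via the layer-cake formula, the doubling of $\Phi$ permits the scale change $\lambda\mapsto N\lambda$; choosing $\epsilon,\delta$ small in terms of $n,p,\Lambda,\Phi,[w]_{\mathbf{A}_{\infty}}$ absorbs the bad-set contribution into the left side and leaves $\int_{\Omega}\Phi(\mathbf{M}_{1}(\mu)^{1/(p-1)})\,w\,dx$ on the right. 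The pointwise bound $|\nabla u|^{p-1}\leq\mathbf{M}(|\nabla u|^{p-1})$ a.e. concludes \eqref{101120144}.

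The main obstacle is the first-stage comparison in the genuinely singular range $p\leq 2-\tfrac{1}{n}$, where $|\nabla u|$ need not be locally integrable and one must work with $|\nabla u|^{p-1}$ as the natural Lebesgue object throughout. The Kilpel\"ainen-Mal\'y/Duzaar-Mingione fine estimates for $\mu$-solutions enter here, and the lower threshold $p>\tfrac{3n-2}{2n-1}$ appears precisely where the dual exponents governing the fractional maximal bounds cease to be admissible. The boundary comparison under mere Reifenberg flatness, together with the careful book-keeping of the dependence of all constants on $\Phi$ and $[w]_{\mathbf{A}_{\infty}}$ so that $\delta$ can be chosen uniformly, constitutes the other principal technical hurdle.
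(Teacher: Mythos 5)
Your broad strategy---local comparison estimates, covering argument, weighted good-$\lambda$ inequality, then layer-cake integration against $\Phi$---is the same as the paper's. The paper proves Theorem~\ref{101120143-p} exactly by establishing a good-$\lambda$ inequality for $({\bf M}(|\nabla u|^{\gamma_0}))^{1/\gamma_0}$ against $({\bf M}_1(\mu))^{1/(p-1)}$ (their Theorem~\ref{5hh23101312}), using comparison estimates on Whitney balls with interior/boundary Lipschitz approximants, and then integrating in $\lambda$ with the doubling of $\Phi$ absorbing the scale factor $\Lambda_0$. Your choice of working with $|\nabla u|^{p-1}$ as the Lebesgue object is acceptable, since for $p>\frac{3n-2}{2n-1}$ the exponent $p-1$ does lie in the admissible window $\bigl(\frac{2-p}{2},\frac{(p-1)n}{n-1}\bigr)$ used by the paper.

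The gap is in the first stage. You assert the single-term comparison estimate
\[
\Bigl(\fint_{B_{r}}|\nabla u-\nabla v|^{p-1}\Bigr)^{\frac{1}{p-1}}\leq C\,\Bigl[\frac{|\mu|(B_{2r})}{r^{n-1}}\Bigr]^{\frac{1}{p-1}}
\]
as if it were available from the existing Kilpel\"ainen--Mal\'y/Duzaar--Mingione theory. It is not, in the singular range $\frac{3n-2}{2n-1}<p\leq 2-\frac{1}{n}$: that clean estimate is what one has for $p>2-\frac{1}{n}$, and the authors emphasize this point explicitly. The actual comparison the paper proves (Lemma~\ref{111120149}, equation~\eqref{1111201410+}) necessarily carries an \emph{additional term},
\[
\Bigl(\fint_{B_{2R}}|\nabla u-\nabla w|^{\gamma_0}\Bigr)^{\frac{1}{\gamma_0}}\leq C\Bigl[\frac{|\mu|(B_{2R})}{R^{n-1}}\Bigr]^{\frac{1}{p-1}}
+C\,\frac{|\mu|(B_{2R})}{R^{n-1}}\Bigl(\fint_{B_{2R}}|\nabla u|^{\gamma_0}\Bigr)^{\frac{2-p}{\gamma_0}},
\]
which reflects the degeneracy of the $p$-energy when $p<2$ and large gradients are present, and whose derivation (via the truncation $T_{h,m}$, the quantity $g(u,w)$, and careful exponent bookkeeping) is the paper's main technical contribution. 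The extra term is then converted to an $\varepsilon$-small $\bigl(\fint|\nabla u|^{\gamma_0}\bigr)^{1/\gamma_0}$ contribution by Young's inequality, which is precisely what allows absorption in the good-$\lambda$ step. Without that term your density-decay inequality on Whitney balls cannot be established by the usual route; this missing step is where the proposal would fail, and it is the reason the lower threshold $p>\frac{3n-2}{2n-1}$ appears at all.

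As a minor note, your displayed comparison is also dimensionally inconsistent (the right side should be $[\,|\mu|(B_{2r})/r^{n-1}\,]^{1/(p-1)}$, and in the density-decay line the side condition should read ${\bf M}_1(\mu)\leq\epsilon\lambda$ when the set is defined via ${\bf M}(|\nabla u|^{p-1})>N\lambda$), but these are typographical rather than structural.
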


In \eqref{101120144} and in what follows the operator  $\mathbf{M}_1$ is the first order fractional maximal function defined by
                                          \begin{align*}
                                          \mathbf{M}_1(\mu)(x):=\sup_{\rho>0}\frac{|\mu|(B_\rho(x))}{\rho^{n-1}}~~\forall  x\in \mathbb{R}^n.
                                          \end{align*}

We shall also use the  Hardy-Littlewood maximal function ${\bf M}$ defined for each locally integrable function  $f$ in $\mathbb{R}^{n}$ by
\begin{equation*}
{\bf M}(f)(x)=\sup_{\rho>0}\fint_{B_\rho(x)}|f(y)|dy~~\forall x\in\mathbb{R}^{n}.
\end{equation*}

The proof of Theorem \ref{101120143-p} above is in fact a consequence of the following good-$\lambda$ type inequality involving both $\mathbf{M}_1$ and ${\bf M}$, which is interesting in its own right.																					
\begin{theorem}\label{5hh23101312}   Let $w\in {\bf A}_\infty$, $\mu\in\mathfrak{M}_b(\Omega)$, and $\frac{3n-2}{2n-1}<p\leq 2-\frac{1}{n}$.   For any $\varepsilon>0,R_0>0$ one can find  constants $\delta_1=\delta_1(n,p,\Lambda,\varepsilon,[w]_{{\bf A}_\infty})\in (0,1)$, $\delta_2=\delta_2(n,p,\Lambda,\varepsilon,[w]_{{\bf A}_\infty},diam(\Omega)/R_0)\in (0,1)$ and $\Lambda_0=\Lambda_0(n,p,\Lambda)>1$ such that if $\Omega$ is a $(\delta_1,R_0)$-Reifenberg flat domain and $[A]_{R_0}\le \delta_1$ then  for any renormalized solution $u$ to \eqref{5hh070120148}, we have 
\begin{align*}
&w(\{( {\bf M}(|\nabla u|^{\gamma_0}))^{1/\gamma_0}>\Lambda_0\lambda, (\mathbf{M}_1(\mu))^{\frac{1}{p-1}}\le \delta_2\lambda \}\cap \Omega) \nonumber\\
&\qquad\leq C\varepsilon w(\{ ({\bf M}(|\nabla u|^{\gamma_0}))^{1/\gamma_0}> \lambda\}\cap \Omega), 
\end{align*}
for any $\lambda>0$. Here  $\gamma_0$ is a number in $\left(\frac{2-p}{2}, \frac{(p-1)n}{n-1}\right)$, and the constant $C$  depends only on $n,p,\Lambda,diam(\Omega)/R_0$, and $[w]_{{\bf A}_\infty}$. 
\end{theorem}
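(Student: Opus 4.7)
The plan is to establish the good-$\lambda$ inequality via the Calderón--Zygmund / Vitali covering strategy pioneered by Caffarelli--Peral and adapted to measure-data problems by Mingione and collaborators, combined with comparison estimates tailored to the singular range $\frac{3n-2}{2n-1}<p\leq 2-\frac{1}{n}$.

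First, I would use the $\mathbf{A}_\infty$ property of $w$ to reduce the weighted level-set inequality to the unweighted density statement
\[
|\{(\mathbf{M}(|\nabla u|^{\gamma_0}))^{1/\gamma_0}>\Lambda_0\lambda,\ (\mathbf{M}_1(\mu))^{1/(p-1)}\leq\delta_2\lambda\}\cap B|\leq C\varepsilon^{1/\nu}|B|,
\]
for every Vitali ball $B=B_r(x_0)$ associated to the covering of $\{(\mathbf{M}(|\nabla u|^{\gamma_0}))^{1/\gamma_0}>\Lambda_0\lambda\}\cap\Omega$ constructed at a scale at which $\fint_B |\nabla u|^{\gamma_0}\,dx\lesssim \lambda^{\gamma_0}$. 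One standard exit-time/stopping-time argument on $\mathbf{M}(|\nabla u|^{\gamma_0})$ produces such a covering inside $F_\lambda=\{(\mathbf{M}(|\nabla u|^{\gamma_0}))^{1/\gamma_0}>\lambda\}$, while the constraint $\mathrm{diam}(\Omega)/R_0$ enters to ensure that the balls can be chosen with $r\leq R_0$ (thereby activating Reifenberg flatness and $(\delta,R_0)$-BMO).

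Second, on each such ball $B_{2r}(x_0)$ I would introduce the reference function $v$ solving the frozen, homogeneous problem $\operatorname{div}(\overline{A}_{B_{2r}(x_0)}(\nabla v))=0$ (on $B_{2r}\cap\Omega$, with matching boundary values when $x_0$ is near $\partial\Omega$). Interior (and, via the Reifenberg flattening, boundary) $C^{1,\alpha}$-regularity of Lieberman--Byun--Wang type gives a uniform bound
\[
\|\nabla v\|_{L^\infty(B_r)}^{\gamma_0}\leq C_1\,\fint_{B_{2r}}|\nabla v|^{\gamma_0}\,dx.
\]
The technical heart is a Duzaar--Mingione style comparison
\[
\Bigl(\fint_{B_r}|\nabla u-\nabla v|^{\gamma_0}\,dx\Bigr)^{1/\gamma_0}\leq C\bigl([A]_{R_0}+\delta\bigr)^{\sigma}\Bigl(\fint_{B_{2r}}|\nabla u|^{\gamma_0}\,dx\Bigr)^{1/\gamma_0}+C\Bigl(\frac{|\mu|(B_{2r})}{r^{n-1}}\Bigr)^{\frac{1}{p-1}},
\]
which is precisely where the singular range is used: the monotonicity inequality $|a-b|^{2}\lesssim(|a|+|b|)^{2-p}\langle A(x,a)-A(x,b),a-b\rangle$ valid for $p<2$ can only be turned into an estimate for $|\nabla u-\nabla v|^{\gamma_0}$ by Hölder when $\gamma_0>(2-p)/2$, while the Marcinkiewicz/global bound $|\nabla u|^{p-1}\in L^{n/(n-1),\infty}$ forces $\gamma_0<(p-1)n/(n-1)$. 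Both conditions are compatible exactly when $p>(3n-2)/(2n-1)$.

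Third, I would combine these ingredients: if there exists a point $y\in B_r(x_0)$ with $(\mathbf{M}(|\nabla u|^{\gamma_0}))^{1/\gamma_0}(y)\leq\lambda$ and $(\mathbf{M}_1(\mu))^{1/(p-1)}(y)\leq\delta_2\lambda$, then the reference bound gives $\|\nabla v\|_{L^\infty(B_r)}\leq C_2\lambda$ and the comparison gives $\fint_{B_r}|\nabla u-\nabla v|^{\gamma_0}\,dx\leq C(\delta^{\sigma\gamma_0}+\delta_2^{\gamma_0})\lambda^{\gamma_0}$. Fixing $\Lambda_0:=2C_2$, the weak-type $(1,1)$ bound for $\mathbf{M}$ then yields
\[
|\{(\mathbf{M}(|\nabla u|^{\gamma_0}))^{1/\gamma_0}>\Lambda_0\lambda\}\cap B_r|\leq \frac{C}{\lambda^{\gamma_0}}\int_{B_{2r}}|\nabla u-\nabla v|^{\gamma_0}\,dx\lesssim(\delta^{\sigma\gamma_0}+\delta_2^{\gamma_0})|B_r|,
\]
and choosing $\delta_1,\delta_2$ small produces the required density $\leq\varepsilon^{1/\nu}|B_r|$, which transfers to the weighted statement by $\mathbf{A}_\infty$. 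Boundary balls are treated identically after the Reifenberg flattening, using the global-in-$\Omega$ version of the comparison. The main obstacle I expect is precisely the measure-data comparison in the singular range: one must quantify the interplay between the forbidden exponent window $(2-p)/2<\gamma_0<(p-1)n/(n-1)$ and the Reifenberg-flat boundary regularity of $v$, for which the restriction $p>(3n-2)/(2n-1)$ is sharp within this strategy.
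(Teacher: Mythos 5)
Your proposal follows essentially the same route as the paper: a weighted Vitali/Calder\'on--Zygmund covering argument (the paper invokes Lemma~\ref{5hhvitali2}), reduction via $\mathbf{A}_\infty$ to an unweighted density estimate on each ball, interior/boundary comparison to an $L^\infty$-controlled reference function, and the weak-$(1,1)$ bound for $\mathbf M$, with the choice of $\gamma_0$ pinned down exactly by the window $\frac{2-p}{2}<\gamma_0<\frac{(p-1)n}{n-1}$ that makes the singular-range comparison close. Your heuristic for both endpoints of that window ($\gamma_0>\frac{2-p}{2}$ from the H\"older step on the monotonicity quantity, $\gamma_0<\frac{(p-1)n}{n-1}$ from the Marcinkiewicz bound $\|\nabla u\|_{L^{(p-1)n/(n-1),\infty}}\lesssim|\mu|(\Omega)^{1/(p-1)}$) matches the paper's logic.

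One omission is worth flagging: you apply the comparison directly to the renormalized solution $u$, but for $\frac{3n-2}{2n-1}<p\leq 2-\frac1n$ such a $u$ need not lie in $W^{1,p}_0(\Omega)$, and the measure-data comparison estimates (Lemma~\ref{111120149}, Propositions~\ref{inter} and~\ref{boundary}) are stated for genuine $W^{1,p}$ solutions. The paper handles this by working with the truncations $u_k=T_k(u)\in W_0^{1,p}(\Omega)$, which solve the equation with datum $\mu_k=\chi_{\{|u|<k\}}\mu_0+\lambda_k^+-\lambda_k^-$, applying the comparison to $(u_k,\mu_k)$, and then passing to the limit using $\nabla u_k\to\nabla u$ in $L^{\gamma}(\Omega)$ (for $\gamma<\frac{(p-1)n}{n-1}$) together with the narrow convergence $\mu_k\to\mu$. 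Without this step the argument is not literally applicable to renormalized solutions. A second, cosmetic point: the paper's comparison is a two-step affair, $u\to w$ (solution of the unperturbed $A$-equation, Lemma~\ref{111120149}) and then $w\to v$ (Lipschitz approximation of the frozen/limiting problem from \cite{55Ph2}); you compress this into one step directly against the frozen problem, which is conceptually equivalent but the precise form of the resulting estimate ($C_\varepsilon$ on the measure term, $([A]_{R_0}^\kappa+\varepsilon)$ on the gradient term, with a free $\varepsilon$ from Young's inequality) is slightly different from the $([A]_{R_0}+\delta)^\sigma$ factor you write.
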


We now  have some comments on the proof of Theorem \ref{5hh23101312}.  It is based on various tools developed for quasilinear equations with measure data and linear or nonlinear potential and Calder\'on-Zygmund theories (see, e.g., \cite{bebo, BW1, 11DMOP, 55DuzaMing, Duzamin2, 55MePh2, Mi2,55Mi0, 55QH2, 55Ph0, 55Ph2}). The key ingredients in this work which make it possible for us to apply those tools are  some new local comparison estimates obtained in the singular case $\frac{3n-2}{2n-1}<p\leq 2-\frac{1}{n}$; see Lemmas \ref{111120149} and \ref{111120149"} below. Earlier those comparison estimates were known in the case  $p>2-\frac{1}{n}$ (see \cite{Duzamin2, Mi2}), and thus in fact one can follow the method of this paper to prove Theorem  
\ref{5hh23101312} in the case $p>2-\frac{1}{n}$ (with $\gamma_0=1$). With this remark, Theorem 
\ref{101120143-p} also holds for $p>2-\frac{1}{n}$ and so does its Corollaries \ref{compactness}-\ref{MW-2} below. 
It is worth mentioning that the comparison estimates obtained in Lemma \ref{111120149} can also be used to extend the recent gradient pointwise estimates by potentials obtained in \cite{Duzamin2} (see also \cite{55DuzaMing, KuMi}) to the case $\frac{3n-2}{2n-1}<p\leq 2-\frac{1}{n}$. This will be pursued in our forthcoming work.

We remark that the function $\Phi$ in Theorem \ref{101120143-p} is quite general. In particular, we do not ask
$\Phi$ to be convex or to satisfy the so-called $\nabla_2$ condition: $\Phi(t)\geq \frac{1}{2a}\Phi(at)$ for some $a>1$
and for all $t\geq 0$. As such one can take, e.g., $\Phi(t)=t^q$ for any $q>0$, or even $\Phi(t)=[\log(1+t)]^{\alpha}$, $\alpha>0$, etc. We emphasize that the introduction of $\Phi$
in Theorem \ref{101120143-p} is not just for the sake of generality. In fact, such $\Phi$ will serve as an indispensable tool in our study of the Riccati type equation \eqref{Riccati}. In particular, Theorem \ref{101120143-p} with such general $\Phi$ is needed  to obtain a useful criterion for compactness of solution sets of 
equation \eqref{5hh070120148}; see Corollary \ref{compactness} below.

In the case $\Phi(t)=t^q$, $q>0$, estimates of the form \eqref{101120144} were obtained for (linear) fractional integral operators by Muckenhoupt-Wheeden in the pioneering work \cite{MW}.  It is worth mentioning that for quasilinear problems the fractional maximal operator approach  has been introduced in Mingione \cite{55Mi0}.
Also, for $\Phi(t)=t^q$, $q>0$, and for $p>2-\frac{1}{n}$  estimate
\eqref{101120144} was  obtained in \cite{55Ph2}. Thus Theorem \ref{101120143-p} is new at least in the case $\frac{3n-2}{2n-1}<p\leq 2-\frac{1}{n}$ considered in this paper. Moreover, using Theorem \ref{5hh23101312} one can also obtain a weighted Lorentz space estimate in the spirit of  \cite{55Ph2} but now for the singular case $\frac{3n-2}{2n-1}<p\leq 2-\frac{1}{n}$. For a weight function
$w$, the weighted Lorentz space $L^{q,s}_w(\Omega)$, 
$q\in(0,\infty), s\in(0,\infty]$,  is the space of measurable functions $g$ on $\Omega$ such that 
\begin{equation*}
\|g\|_{L^{q,s}_w(E)}:=\left\{ \begin{array}{l}
\left(q\int_{0}^{\infty}\left(\rho^qw\left(\{x\in \Omega:|g(x)|>\rho\}\right)\right)^{\frac{s}{q}}\frac{d\rho}{\rho}\right)^{1/s}<\infty~\text{ if }~s<\infty, \\ 
\sup_{\rho>0}\rho \left( w\left(\{x\in \Omega:|g(x)|>\rho\}\right)\right)^{1/q}<\infty~~\text{ if }~s=\infty. \\ 
\end{array} \right.
\end{equation*}              
Here we write $w(E)=\int_{E}w(x)dx$ for a measurable set $E\subset \mathbb{R}^{n}$.  Obviously,  
$
\|g\|_{L^{q,q}_w(\Omega)}=\|g\|_{L^q_w(\Omega)}
$,
thus $L^{q,q}_w(\Omega)=L^{q}_w(\Omega)$.               As usual, when $w \equiv 1$  we  write  $L^{q,s}(\Omega)$ instead of $L^{q,s}_w(\Omega)$.

 \begin{theorem} \label{101120143} Let $\mu\in \mathfrak{M}_b(\Omega)$ and $\frac{3n-2}{2n-1}<p\leq 2-\frac{1}{n}$.   For any $w\in \mathbf{A}_{\infty}$, $0< q<\infty$, $0<s\leq\infty$ we can find  $\delta=\delta(n,p,\Lambda, q,s, [w]_{\mathbf{A}_{\infty}})\in (0,1)$ such that if $\Omega$ is  $(\delta,R_0)$-Reifenberg flat   and $[A]_{R_0}\le \delta$ for some $R_0>0$, then  for any renormalized solution $u$ of \eqref{5hh070120148}, we have                            
                  \begin{equation*}
                              \|\nabla u\|_{L^{q,s}_w(\Omega)}\leq C \|[\mathbf{M}_1(\mu)]^{\frac{1}{p-1}}\|_{L^{q,s}_w(\Omega)}.
                                       \end{equation*} 
                                        Here the constant $C$ depends only  on $n,p,\Lambda,q,s, [w]_{\mathbf{A}_\infty}$ and $diam(\Omega)/R_0$.               
\end{theorem}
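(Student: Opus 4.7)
The plan is to deduce Theorem 1.5 directly from the good-$\lambda$ inequality of Theorem \ref{5hh23101312} via the standard distribution-function representation of Lorentz quasi-norms. Set
\[ g := ({\bf M}(|\nabla u|^{\gamma_0}))^{1/\gamma_0}, \qquad h := [\mathbf{M}_1(\mu)]^{1/(p-1)}. \]
Since $\gamma_0 < (p-1)n/(n-1)$ and any renormalized solution satisfies $|\nabla u|^{p-1}\in L^r_{\text{loc}}(\Omega)$ for all $r < n/(n-1)$, the function $|\nabla u|^{\gamma_0}$ is locally integrable, so the Lebesgue differentiation theorem gives $|\nabla u| \leq g$ a.e.\ in $\Omega$. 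It therefore suffices to prove $\|g\|_{L^{q,s}_w(\Omega)} \leq C\|h\|_{L^{q,s}_w(\Omega)}$.

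For a small parameter $\varepsilon>0$ to be chosen, apply Theorem \ref{5hh23101312} with this $\varepsilon$, which simultaneously fixes the smallness thresholds $\delta_1$, $\delta_2$ and the dilation constant $\Lambda_0$. Writing the trivial inclusion
\[ \{g > \Lambda_0 \lambda\}\cap\Omega \subset \bigl(\{g > \Lambda_0\lambda,\, h \leq \delta_2 \lambda\}\cap\Omega\bigr) \cup \{h > \delta_2 \lambda\}, \]
Theorem \ref{5hh23101312} yields, for every $\lambda>0$,
\[ w(\{g > \Lambda_0\lambda\}\cap\Omega) \leq C\varepsilon\, w(\{g > \lambda\}\cap\Omega) + w(\{h > \delta_2 \lambda\}\cap\Omega). \]
Inserting this into the Lorentz-norm identity
\[ \|g\|_{L^{q,s}_w(\Omega)}^s = q\int_0^\infty \bigl(\lambda^q w(\{g>\lambda\}\cap\Omega)\bigr)^{s/q} \frac{d\lambda}{\lambda}\qquad (s<\infty), \]
using the change of variable $\lambda \mapsto \Lambda_0\lambda$ and the quasi-triangle inequality $(a+b)^{s/q} \leq C(a^{s/q}+b^{s/q})$, one obtains
\[ \|g\|_{L^{q,s}_w(\Omega)}^s \leq C_1 \Lambda_0^s (C\varepsilon)^{s/q}\|g\|_{L^{q,s}_w(\Omega)}^s + C_2(\Lambda_0/\delta_2)^s\|h\|_{L^{q,s}_w(\Omega)}^s, \]
with an entirely analogous supremum-based bound when $s=\infty$. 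Choosing $\varepsilon = \varepsilon(n,p,\Lambda,q,s,[w]_{\mathbf{A}_\infty})$ so small that $C_1\Lambda_0^s(C\varepsilon)^{s/q} \leq 1/2$ and setting $\delta = \delta_1$ from Theorem \ref{5hh23101312} for this $\varepsilon$, one absorbs the first term and recovers the desired estimate.

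The main obstacle is that the absorption step requires the a priori finiteness of $\|g\|_{L^{q,s}_w(\Omega)}$, which is not known in advance for a general measure datum. I would handle this by truncating the outer integral: for $N>0$ set
\[ I_N := q\int_0^N \bigl(\lambda^q w(\{g>\lambda\}\cap\Omega)\bigr)^{s/q}\frac{d\lambda}{\lambda}, \]
which is finite since $\Omega$ is bounded (so $w(\Omega)<\infty$) and the integrand is bounded on $(0,N)$. Repeat the splitting and absorption argument on $I_N$ in place of $\|g\|_{L^{q,s}_w(\Omega)}^s$, observing that all constants produced are independent of $N$, and then let $N\to\infty$ by monotone convergence. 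An equally standard alternative is to first prove the estimate for smooth approximations $\mu_k\to\mu$, for which the associated solutions are bounded in $W^{1,\infty}$ by classical regularity and hence $\|g_k\|_{L^{q,s}_w(\Omega)}<\infty$ automatically, then pass to the limit using the stability of renormalized solutions established in \cite{11DMOP}. Apart from this finiteness issue, the remaining work is purely bookkeeping to verify that the quantitative dependencies in Theorem \ref{5hh23101312} propagate to those declared in the present theorem.
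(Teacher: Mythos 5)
Your proposal is correct and follows essentially the route the paper itself indicates: the authors state that Theorem~\ref{101120143} follows by the same good-$\lambda$ integration argument used for Theorem~\ref{101120143-p} (integrate the good-$\lambda$ inequality against the Lorentz functional, rescale $\lambda\mapsto\Lambda_0\lambda$ and $\lambda\mapsto\delta_2\lambda$, and absorb), and that is exactly what you do, with the necessary truncation at level $N$ playing the role of the paper's truncation at level $T$ to justify the absorption.
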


Theorem \ref{101120143-p} implies the following compactness criterion for solution sets of equation  \eqref{5hh070120148}. This result will be needed in the proof of Theorem \ref{main-Ric} below.

\begin{corollary}\label{compactness} Suppose that  $\frac{3n-2}{2n-1}<p\leq 2-\frac{1}{n}$. For each $j>0$, let $\mu_j \in \mathfrak{M}_b(\Omega)$. 
Let $u_j$ be a solution of \eqref{5hh070120148} with datum $\mu=\mu_j$ in $\Omega$.
Assume that 
$\{\left[\mathbf{M}_1(\mu_j)\right]^{\frac{q}{p-1}}\}_{j}$, $q>0$, is a bounded and equi-integrable subset of  $L^1_w(\Omega)$
for some   $w\in \mathbf{A}_{\infty}$.
Then, there exists  $\delta=\delta(n,p,\Lambda, q, [w]_{\mathbf{A}_{\infty}})\in (0,1)$ such that if $\Omega$ is  $(\delta,R_0)$-Reifenberg flat  and $[A]_{R_0}\le \delta$ for some $R_0>0$, then there exist a subsequence 
$\{u_{j'}\}_{j'}$ and a finite a.e. function $u$ with the property that $T_k(u)\in W^{1,p}_0(\Omega)$ for all $k>0$, $u_{j'}\rightarrow u$ a.e., and 
\begin{equation}\label{strong-q-w}
\nabla u_{j'} \rightarrow \nabla u \quad \text{strongly in} \quad L^q_{w}(\Omega, \mathbb{R}^n).
\end{equation}
\end{corollary}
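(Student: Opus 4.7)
The plan is to deduce the strong $L^q_w$ convergence \eqref{strong-q-w} from Vitali's convergence theorem. The argument therefore decomposes into two ingredients: (a) extraction of a subsequence along which $u_{j'}\to u$ a.e.\ and $\nabla u_{j'}\to \nabla u$ a.e.\ in $\Omega$, with the limit $u$ finite a.e.\ and $T_k(u)\in W^{1,p}_0(\Omega)$ for every $k>0$; and (b) equi-integrability of the family $\{|\nabla u_j|^q w\}_j$ on $\Omega$. Since Fatou's lemma combined with Theorem \ref{101120143-p} applied with $\Phi(t)=t^q$ guarantee $\nabla u\in L^q_w(\Omega,\mathbb{R}^n)$, Vitali's theorem then produces \eqref{strong-q-w} at once.

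For (b), set $h_j:=[\mathbf{M}_1(\mu_j)]^{1/(p-1)}$, so that $\{h_j^q\}$ is bounded and equi-integrable in $L^1_w(\Omega)$ by hypothesis. The plan is to apply Theorem \ref{101120143-p} with a function $\tilde\Phi$ that is simultaneously of moderate growth and strictly superlinear with respect to $t^q$. By the de la Vall\'ee Poussin criterion, in a refined version providing a comparison function satisfying the $\Delta_2$ condition (built, for instance, by integrating a step function $H(s)=1+\#\{k:M_k<s\}$ with thresholds chosen so that $M_{k+1}\ge 2M_k$ and $\int_{\{h_j^q>M_k\}}h_j^q\,w\,dx\le 2^{-k}$ for every $j$), one produces a strictly increasing continuous $G:[0,\infty)\to[1,\infty)$ with $G(s)\to\infty$, $G(2s)\le c\,G(s)$, and $\sup_j\int_\Omega h_j^q\, G(h_j^q)\,w\,dx<\infty$. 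Setting $\tilde\Phi(t):=t^q G(t^q)$, Theorem \ref{101120143-p} yields $\sup_j\int_\Omega \tilde\Phi(|\nabla u_j|)\,w\,dx<\infty$, and since $\tilde\Phi(t)/t^q\to\infty$ a further application of de la Vall\'ee Poussin delivers the equi-integrability claimed in (b).

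For (a), I would first observe $\sup_j|\mu_j|(\Omega)<\infty$: for any $x\in\Omega$ and $\rho\ge\operatorname{diam}(\Omega)$ one has $\mathbf{M}_1(\mu_j)(x)\ge \rho^{1-n}|\mu_j|(\Omega)$, so integrating $[\mathbf{M}_1(\mu_j)]^{q/(p-1)}w$ over $\Omega$ gives $|\mu_j|(\Omega)^{q/(p-1)}\le C\,w(\Omega)^{-1}\sup_j\|h_j^q\|_{L^1_w}$. Combined with the uniform $L^q_w$ bound on $\nabla u_j$ and the standard energy estimate which forces $\{T_k(u_j)\}_j$ to be bounded in $W^{1,p}_0(\Omega)$ for each fixed $k>0$, the classical compactness machinery for renormalized solutions (\cite{11DMOP, BGO, bebo}) delivers through a diagonal extraction a subsequence $u_{j'}$ and a limit $u$ with the desired properties. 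I expect this step to be the main obstacle: upgrading weak $W^{1,p}_0$ convergence of the truncations to a.e.\ convergence of their gradients (hence of $\nabla u_{j'}$ on each set $\{|u|<k\}$) requires the Boccardo--Murat monotonicity argument adapted to measure data, which in turn hinges on carefully tracking the diffuse and singular parts of $\mu_j$ through the renormalization procedure so that the monotonicity of $A(x,\cdot)$ can be converted into strong convergence of $\nabla T_k(u_{j'})$.
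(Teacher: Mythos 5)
Your proposal is correct and follows essentially the same route as the paper: obtain a superlinear, moderate-growth Young function from de la Vall\'ee--Poussin (the paper cites \cite{Mey} for the $\Delta_2$ refinement; your explicit $M_k$-threshold construction achieves the same), feed $\Phi(t)=G(t^q)$ (equivalently your $t^qG(t^q)$) into Theorem~\ref{101120143-p} to get equi-integrability of $\{|\nabla u_j|^q\}$ in $L^1_w$, derive $\sup_j|\mu_j|(\Omega)<\infty$ from the hypothesis, invoke the compactness/stability machinery of \cite[proof of Theorem 3.4]{11DMOP} for a.e.\ convergence of $u_{j'}$ and $\nabla u_{j'}$, and finish with Vitali. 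The step you flag as the main obstacle is exactly the one the paper also dispatches by citation to \cite{11DMOP}, so there is no gap.
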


One can also combine Theorem \ref{101120143-p} (or Theorem \ref{101120143}) with a classical result of Muckenhoupt and Wheeden \cite[Theorem 3]{MW} to obtain the following gradient regularity result. This result was shown to be sharp for fractional integrals (Riesz's potentials) of order 1 (see \cite[Theorem 4]{MW}).

\begin{corollary}\label{MW-2} Suppose that  $\frac{3n-2}{2n-1}<p\leq 2-\frac{1}{n}$. For each $f\in L^1(\Omega)$, we denote by
$u(f)$ the (unique) renormalized solution of \eqref{5hh070120148} with datum $\mu=f$ in $\Omega$. Assume that 
$1<s<n$,  $q=\frac{ns}{n-s}$, and $V(x)$ is a nonnegative function in $\RR^n$ such that 
$$K:=\sup_{Balls \, B\subset\RR^n}\left(\fint_{B} [V(x)]^q dx\right)^{\frac{1}{q}} \left( \fint_{B} [V(x)]^{\frac{-s}{s-1}}\right)^{\frac{s-1}{s}}<+\infty.$$
Then there exists   $\delta=\delta(n, p, s, \Lambda,  K)\in (0,1)$ such that if $\Omega$ is  $(\delta,R_0)$-Reifenberg flat  and $[A]_{R_0}\le \delta$ with some $R_0>0$ then  we have                            
     \begin{equation*}
\int_{\Omega} |\nabla (u(f))|^{(p-1)q}\, V(x)^{q} dx \leq C \int_{\Omega} |f(x)|^s\,  V(x)^s dx,
    \end{equation*} 
 where the constant $C$ depends only  on $n,p,s,\Lambda, K$ and $diam(\Omega)/R_0$.          
\end{corollary}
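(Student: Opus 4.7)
My approach is to combine Theorem \ref{101120143-p} (in its weighted $L^q$ form, obtained by taking $\Phi(t)=t^{(p-1)q}$) with the classical Muckenhoupt--Wheeden two weight bound for the Riesz potential $I_1$, together with the pointwise domination of $\mathbf{M}_1$ by $I_1$.

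The key preliminary observation is that the two weight hypothesis translates into a quantitative $\mathbf{A}_\infty$ bound for $w:=V^q$. Setting $s'=s/(s-1)$ and $r=1+q/s'$, raising the supremum defining $K$ to the $q$-th power gives
\begin{equation*}
\sup_B \Bigl(\fint_B V^q\Bigr)\Bigl(\fint_B (V^q)^{-1/(r-1)}\Bigr)^{r-1}\leq K^q,
\end{equation*}
which is precisely the Muckenhoupt $\mathbf{A}_r$ condition for $V^q$; hence $V^q\in \mathbf{A}_r\subset \mathbf{A}_\infty$, with $[V^q]_{\mathbf{A}_\infty}$ controlled only by $n,s,q,K$. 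With this in hand, I would apply Theorem \ref{101120143-p} to $u=u(f)$ with $\mu=f$, weight $w=V^q$, and $\Phi(t)=t^{(p-1)q}$ (strictly increasing, vanishing at $0$, tending to infinity, and clearly doubling). The smallness parameter supplied by that theorem then depends only on $n,p,\Lambda,q,[V^q]_{\mathbf{A}_\infty}$, hence only on $n,p,s,\Lambda,K$, matching the statement. The conclusion is
\begin{equation*}
\int_\Omega |\nabla u(f)|^{(p-1)q}\,V^q\,dx\leq C\int_\Omega \bigl[\mathbf{M}_1(f)\bigr]^q\,V^q\,dx.
\end{equation*}

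To close the argument, I would reduce the right hand side to the desired form via $I_1$. For each $x\in\mathbb{R}^n$, the elementary estimate
\begin{equation*}
\rho^{1-n}\!\!\int_{B_\rho(x)}|f(y)|\,dy\leq \int_{B_\rho(x)}\frac{|f(y)|}{|x-y|^{n-1}}\,dy\leq C(n)\,I_1(|f|)(x)
\end{equation*}
yields $\mathbf{M}_1(f)\leq C(n)\,I_1(|f|)$ pointwise. The Muckenhoupt--Wheeden two weight theorem \cite[Theorem~3]{MW}, applied with $\alpha=1$ and the Sobolev pair $(s,q)$ satisfying $1/s-1/q=1/n$, then gives
\begin{equation*}
\Bigl(\int_{\mathbb{R}^n}\bigl[I_1(|f|)\bigr]^q\,V^q\,dx\Bigr)^{1/q}\leq C\,\Bigl(\int_\Omega |f|^s\,V^s\,dx\Bigr)^{1/s},
\end{equation*}
with $C$ depending only on $n,s,K$. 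Concatenating these two bounds produces the claimed inequality.

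I do not expect a serious conceptual obstacle here, since the plan is essentially an assembly of Theorem \ref{101120143-p} with a classical two weight result. The only step that requires careful bookkeeping is the passage from $K$ to the $\mathbf{A}_\infty$ character of $V^q$: one must verify that the resulting $\mathbf{A}_\infty$ constant depends only on $K$ (and on $n,s,q$), so that the smallness threshold delivered by Theorem \ref{101120143-p} ultimately depends only on the parameters $n,p,s,\Lambda,K$ listed in the statement and not on any finer structural information about $V$.
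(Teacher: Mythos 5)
Your plan is precisely the route the paper indicates — it gives no separate proof, only the remark that one can ``combine Theorem \ref{101120143-p} (or Theorem \ref{101120143}) with a classical result of Muckenhoupt and Wheeden [Theorem 3]'' — and you have filled in the details correctly: the identification $V^q\in\mathbf{A}_r$ with $r=1+q/s'$ (so $[V^q]_{\mathbf{A}_\infty}$, and hence $\delta$, depends only on $n,p,s,\Lambda,K$), the choice $\Phi(t)=t^{(p-1)q}$, the pointwise bound $\mathbf{M}_1(f)\le C(n)\,I_1(|f|)$, and the final two-weight strong type estimate of Muckenhoupt--Wheeden. One small point: chaining your three estimates actually yields
\begin{equation*}
\int_\Omega |\nabla u(f)|^{(p-1)q}\,V^q\,dx \le C\Bigl(\int_\Omega |f|^s\,V^s\,dx\Bigr)^{q/s},
\end{equation*}
with an outer exponent $q/s$ that is missing from the corollary as printed; that exponent must be there, since with $A(x,\xi)=|\xi|^{p-2}\xi$ replacing $f$ by $\lambda^{p-1}f$ scales the printed left side by $\lambda^{(p-1)q}$ and the printed right side by $\lambda^{(p-1)s}$, and $q>s$. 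The normalized form $\bigl(\int_\Omega |\nabla u(f)|^{(p-1)q}V^q\,dx\bigr)^{1/q}\le C\bigl(\int_\Omega |f|^sV^s\,dx\bigr)^{1/s}$ is what is intended and what your argument proves.
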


We  next describe our results in regard to equation  \eqref{Riccati}.  For this, we shall need the notion of capacity
associated to the Sobolev space $W^{1, s}(\RR^n)$,  $1<s<+\infty$.
For a compact set $K\subset\RR^n$,  we  define
\begin{equation*}
{\rm Cap}_{1,  s}(K)=\inf\Big\{\int_{\RR^n}(|\nabla \varphi|^s +\varphi^s) dx: \varphi\in C^\infty_0(\RR^n),
\varphi\geq \chi_K \Big\}.
\end{equation*}

Note that ${\rm Cap}_{1,  s}$ can be extended to all sets $E\subset\RR^n$ by letting 
\begin{equation*}
{\rm Cap}_{1,  s}(E)=\inf_{\substack{O\supset E\\ O\, open}} \, \Big\{  \sup_{\substack{K\subset O\\ K \, compact}} {\rm Cap}_{1,  s}(K)\Big\}.
\end{equation*}
Moreover, by the capacitability of Borel sets (see, e.g., \cite[Theorem 2.3.11]{AH}) we have
$${\rm Cap}_{1,  s}(E)=\sup_{\substack{K\subset E\\ K \, compact}} {\rm Cap}_{1,  s}(K)$$
for any Borel set $E\subset\RR^n$.

\begin{theorem}\label{main-Ric}
Let $\frac{3n-2}{2n-1}<p\leq 2-\frac{1}{n}$ and  $q\geq 1$. 
There exists a constant  $\delta=\delta(n, p, \Lambda, q)\in (0, 1)$ such that the following holds. 
Suppose that  $[A]_{R_0}\leq \delta$ and $\Om$ is $(\delta, R_0)$-Reifenberg flat for some $R_0>0$. Then there exists a constant 
$c_0=c_0(n, p, \Lambda, q, {\rm diam}(\Om), R_0)>0$ such that if $\mu$ is a finite signed measure in $\Om$ with 
\begin{equation}\label{capcondi} 
|\mu|(K) \leq c_0\, {\rm Cap}_{1,\, \frac{q}{q-p+1}}(K)
\end{equation}
for all compact sets $K\subset\Om$, then there exists a renormalized solution $u\in W_0^{1, q}(\Om)$ to the Riccati type equation \eqref{Riccati} such that 
\begin{equation*} 
\int_{K} |\nabla u|^q  \leq C\, {\rm Cap}_{1,\, \frac{q}{q-p+1}}(K)
\end{equation*}
for all compact sets $K\subset\Om$. Here the constant $C$ depends only on $n, p, \Lambda, q, {\rm diam}(\Om)$, and $R_0$.
\end{theorem}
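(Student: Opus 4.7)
The proof plan is a fixed-point iteration resting on Theorem \ref{101120143-p} and Corollary \ref{compactness}. Set $s := q/(q-p+1)$, whose H\"older conjugate is $s' = q/(p-1)$, matching the exponent on the right-hand side of Theorem \ref{101120143-p} under the choice $\Phi(t) = t^q$. Let $u_0 \equiv 0$ and, for $j \geq 0$, let $u_{j+1}$ be the renormalized solution of
\[
-\operatorname{div}(A(x, \nabla u_{j+1})) = |\nabla u_j|^q + \mu \text{ in } \Om, \qquad u_{j+1} = 0 \text{ on } \partial \Om,
\]
which is well-defined once $|\nabla u_j|^q \in L^1(\Om)$. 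The central aim is to produce a constant $M$, independent of $j$, such that
\[
\int_K |\nabla u_j|^q\, dx \leq M\, \operatorname{Cap}_{1, s}(K), \qquad \text{for all compact } K \subset \Om, \; j \geq 0;
\]
the bound in the statement of the theorem then follows upon passing to the limit.

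The induction step is where the main analytical work lies. Write $\nu_j := |\nabla u_j|^q\, dx + |\mu|$, so that, by the inductive hypothesis and \eqref{capcondi}, $\nu_j(K) \leq (M + c_0)\, \operatorname{Cap}_{1, s}(K)$ for every compact $K \subset \Om$. The key input is a localized Adams--Maz'ya-type trace inequality for Bessel capacities: for any nonnegative measure $\nu$ on $\Om$ satisfying $\nu(K) \leq a\, \operatorname{Cap}_{1, s}(K)$ for all compact $K$, one has
\[
\int_K [\mathbf{M}_1 \nu]^{s'}\, dx \leq C_1\, a^{s'-1}\, \nu(K) \leq C_1\, a^{s'}\, \operatorname{Cap}_{1, s}(K).
\]
Pairing this with Theorem \ref{101120143-p}, applied to $u_{j+1}$ with $\Phi(t) = t^q$ and an $\mathbf{A}_\infty$ weight calibrated to localize near $K$ (e.g.\ constructed from the capacitary potential of $K$, which has Muckenhoupt character with controlled constants), produces
\[
\int_K |\nabla u_{j+1}|^q\, dx \leq C_2\, (M + c_0)^{s'}\, \operatorname{Cap}_{1, s}(K).
\]
Since $s' > 1$, one can take $M := 2 C_2 (2 c_0)^{s'}$ and require $c_0$ so small that $C_2 (M + c_0)^{s'} \leq M$, which closes the induction.

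With the capacity bound uniform in $j$, the family $\{|\nabla u_j|^q\}_j$ is equi-integrable in $L^1(\Om)$, since $\operatorname{Cap}_{1, s}$ dominates Lebesgue measure to an appropriate power. Corollary \ref{compactness}, invoked with measure data $|\nabla u_j|^q + \mu$ and weight $w \equiv 1$, supplies a subsequence $u_{j'} \to u$ a.e.\ with $\nabla u_{j'} \to \nabla u$ strongly in $L^q(\Om, \RR^n)$. Strong convergence forces $|\nabla u_{j'}|^q \to |\nabla u|^q$ in $L^1(\Om)$, hence narrow convergence of the right-hand sides; standard stability of renormalized solutions under convergence of data (cf.\ \cite{11DMOP}) then identifies $u$ as a renormalized solution of \eqref{Riccati}, and Fatou's lemma transmits the capacity bound to $|\nabla u|^q$. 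The principal obstacle is the capacity induction step: Theorem \ref{101120143-p} only yields a weighted global $L^q$ estimate, and converting it into a localized, capacity-measured bound requires the dual trace inequalities for Bessel capacities together with the judicious choice of localizing weight; it is precisely the need to close the iteration inside the capacity-bounded class that forces the smallness of $c_0$.
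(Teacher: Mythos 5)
Your overall plan---a monotone-type iteration producing a uniform capacity bound, closed by smallness of $c_0$ and passed to the limit via Corollary \ref{compactness}---is a legitimate alternative to the paper's Schauder fixed-point argument, and it rests on the same two pillars (Theorem \ref{101120143-p} and Corollary \ref{compactness}). However, the single most technical step of the whole proof is dispatched in one sentence, and as written it does not work. Theorem \ref{101120143-p} is a \emph{global} weighted bound $\int_\Omega |\nabla u_{j+1}|^q w \leq C \int_\Omega [\mathbf{M}_1(\nu_j)]^{q/(p-1)} w$. To extract from it a \emph{localized} inequality of the form $\int_K |\nabla u_{j+1}|^q \leq C(M+c_0)^{s'}\mathrm{Cap}_{1,s}(K)$ you propose choosing ``an $\mathbf{A}_\infty$ weight calibrated to localize near $K$.'' Any such weight that is $\geq 1$ on $K$ (so the left side controls $\int_K$) is, by the doubling and reverse-H\"older structure of $\mathbf{A}_\infty$ with fixed constants, bounded below on an appreciable neighbourhood of $K$; there is no reason for $\int_\Omega [\mathbf{M}_1 \nu_j]^{s'}\,w$ to be controlled by $\int_K [\mathbf{M}_1\nu_j]^{s'}$, since $\mathbf{M}_1\nu_j$ may live far from $K$. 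A naive localization therefore fails, and the trace inequality you invoke is not enough to rescue it.

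The paper resolves this by not localizing at all. It tests the global estimate against the uniformly bounded family of $\mathbf{A}_1$ weights $w = \mathbf{I}_1 f$ with $f \geq 0$, $f \in L^\infty_c$ (all of which satisfy $[w]_{\mathbf{A}_1}\leq M$ by \eqref{A1pot}); Fubini then upgrades the weighted inequality to the \emph{pointwise} potential estimate of Lemma \ref{pwforv-lem},
\[
\mathbf{I}_1^{2R}(|\nabla v|^q\chi_\Omega)(x)\leq C_1 T_1\,[\mu]_{M^{1,\frac{q}{q-p+1}}(\Omega)}^{\frac{q-p+1}{p-1}}\,\mathbf{I}_1^{2R}(|\mu|)(x),
\]
using the Maz'ya--Wolff iteration inequality \cite[Eq.\ (2.10)]{Ph1}. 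Only afterwards is the capacity bound recovered, via \eqref{MaVer} and \cite[Lemma 3.1]{MV}, which characterize $M^{1,s}$-membership by exactly such $\mathbf{A}_1$-weighted tests. Your ``dual trace inequality'' $\int_K [\mathbf{M}_1\nu]^{s'}\,dx\leq C a^{s'}\mathrm{Cap}_{1,s}(K)$ is indeed a correct known fact (it is the maximal-function analogue of \cite[Corollary 2.5]{Ph1}), but without the $\mathbf{A}_1$-duality step it cannot be married to the global estimate in the way you suggest.

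Two further points. (i) Your equi-integrability argument relies on $\mathrm{Cap}_{1,s}$ dominating a positive power of Lebesgue measure; this is the standard inequality $\mathrm{Cap}_{1,s}(E)\geq c|E|^{1-s/n}$ and it \emph{degenerates} at the borderline $s=n$, which actually occurs here when $q=1$ and $p=2-\tfrac1n$ (then $s=q/(q-p+1)=n$). The paper sidesteps this by dominating $\mathbf{M}_1(|\nabla v_k|^q\chi_\Omega+|\mu_k|)$ by the single fixed function $C\,\mathbf{I}_1^{2R}(|\mu|)$, an automatic equi-integrability device that you would also need. (ii) You should first assume $\mu\in\mathfrak{M}_0(\Omega)$ (where renormalized solutions are unique, so the iteration is unambiguously defined) and then remove this assumption by mollification of the singular part, as the paper does; for general $\mu$ the step ``let $u_{j+1}$ be the renormalized solution'' is not well-posed.
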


It is worth mentioning that the capacitary condition \eqref{capcondi} is sharp. Namely,  if \eqref{Riccati} has a solution with $\om$ being  nonnegative and compactly supported in $\Om$ then  \eqref{capcondi} holds with  a different constant $c_0$ (see \cite{HMV, Ph1}). Moreover,  it is also practically useful. In particular, it implies  that the Marcinkiewicz space condition $\mu\in L^{\frac{n(q-p+1)}{q},  \infty}(\Om)$, $q>\frac{n(p-1)}{n-1}$, (with a small norm) is sufficient for the solvability of 
\eqref{Riccati}. Other sufficient conditions of Fefferman-Phong type involving Morrey spaces can also be deduced from  \eqref{capcondi} (see Corollaries 3.5 and 3.6 in \cite{Ph1}). See also Theorem \ref{remove-S-nec} below in which  \eqref{capcondi} is used in the study of removable singularities  for the homogeneous Riccati type equation $-{\rm div} (A(x, \nabla u)) =|\nabla u|^q$.

Theorem \ref{main-Ric} extends  similar existence  results obtained earlier for $2-\frac{1}{n}<p\leq n$ in 
\cite{55Ph2, 55Ph2-2}. See  also \cite{Ph1, Ph1-1} or \cite{AdP1, AdP2} where the case $q>p$ or $q=p$ is studied, respectively. 
In particular, Theorem \ref{main-Ric} solves  an open problem in \cite[page 13]{VHV} at least for compactly supported measures and  for  $\frac{3n-2}{2n-1}<p\leq 2-\frac{1}{n}$, $q\geq 1$. It is natural to expect that Theorem \ref{main-Ric} should also hold  for $p-1<q<1$ but we are not able to prove it here due to the lack of convexity.
It is also worth mentioning that the  `linear' case $p=2$ was first considered in the pioneering work \cite{HMV}. There is a vast literature on equations of the form \eqref{Riccati} (but mostly for $0<q\leq p$). We  refer to  \cite{ALT, BBM,   BMP1, BMP2, VHV, CC, De, FMe,  FPR,  Gre, GT,  Me, PS} and to
\cite{BMMP, FMu, FM3,  JMV1, JMV2, FV,  GMP1, GMP2,  Ph3} for various contributions.

Finally, as mentioned above  Theorem \ref{main-Ric} can be used  to give sharp bound on  the size of removable singular sets for homogeneous Riccati type equations.  We recall that a Borel set $E\subset\Omega$ is a said to be a removable singular set for the equation  $-{\rm div} (A(x, \nabla u)) =|\nabla u|^q$
in $\Omega$ if any solution $u$ to  
\begin{equation*}
\left\{\begin{array}{l}
 u\in W^{1, q}_{loc}(\Om\setminus E), ~ \text{and}\\
-{\rm div} (A(x, \nabla u)) =|\nabla u|^q ~ \text{in}~
 \mathcal{D}'(\Om\setminus E)
\end{array}
\right.
\end{equation*}
can be extended to be a solution to 
\begin{equation*}
\left\{\begin{array}{l}
 u\in W^{1, q}_{loc}(\Om), ~ \text{and}\\
-{\rm div} (A(x, \nabla u)) =|\nabla u|^q ~ \text{in}~
 \mathcal{D}'(\Om).
\end{array}
\right.
\end{equation*}

\begin{theorem}\label{remove-S-nec}
Let $\frac{3n-2}{2n-1}<p\leq 2-\frac{1}{n}$ and  $q\geq 1$.  There exists a constant  $\delta=\delta(n, p, \Lambda, q)\in (0, 1)$ such that the following holds. 
Suppose that  $[A]_{R_0}\leq \delta$ and $\Om$ is $(\delta, R_0)$-Reifenberg flat for some $R_0>0$. 
If a Borel set $E\subset\Om$ is a removable set for the equation  $-{\rm div} (A(x, \nabla u)) =|\nabla u|^q$
in $\Omega$, then it must hold that   
$${\rm Cap}_{1,\,\frac{q}{q-p+1}}(E)=0.$$ 
\end{theorem}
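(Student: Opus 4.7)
The plan is to argue by contradiction. Assume $\operatorname{Cap}_{1,s}(E)>0$ with $s := q/(q-p+1)$. By the capacitability identity recalled just before the statement, I fix a compact set $K\subset E$ with $\operatorname{Cap}_{1,s}(K)>0$; it then suffices to produce a contradiction from this single compact set. The strategy is to feed a nontrivial measure $\mu$ supported on $K$ into Theorem \ref{main-Ric}, obtaining a renormalized solution whose restriction to $\Omega\setminus E$ solves the homogeneous Riccati equation, and then use removability to propagate it back to $\Omega$ in a way that forces $\mu=0$.

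The key preparatory step is to produce a nonzero positive Radon measure $\nu$ concentrated on $K$ satisfying the pointwise set-function bound $\nu(B)\le C_\ast \operatorname{Cap}_{1,s}(B)$ for every Borel $B\subset\RR^n$. Since $s>1$ throughout the parameter range (for $q\ge 1$ and $\frac{3n-2}{2n-1}<p\le 2-\frac{1}{n}$), this is classical nonlinear potential theory: either Maz'ya's strong capacitary inequality or the $(1,s)$-equilibrium-measure construction supplies such a $\nu$. Rescaling, $\mu:=(c_0/C_\ast)\nu$ meets the capacitary smallness hypothesis \eqref{capcondi} of Theorem \ref{main-Ric}, so that theorem furnishes a renormalized solution $u\in W_0^{1,q}(\Omega)$ of the inhomogeneous Riccati equation $-\operatorname{div}(A(x,\nabla u))=|\nabla u|^q+\mu$ with vanishing boundary values. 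Passing $k\to\infty$ in Definition \ref{derenormalized}, using $|A(x,\nabla u)|\lesssim|\nabla u|^{p-1}\in L^1_{loc}(\Omega)$ (valid because $q\ge 1>p-1$) together with the narrow convergence $\lambda_k^\pm\to\mu_s^\pm$, upgrades $u$ to a genuine distributional solution on $\Omega$:
\[
-\operatorname{div}(A(x,\nabla u))=|\nabla u|^q+\mu\quad\text{in }\mathcal{D}'(\Omega).
\]

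Because $\operatorname{supp}\mu\subset K\subset E$, testing this identity against $\varphi\in C_c^\infty(\Omega\setminus E)$ annihilates the measure term, so $u$ solves the homogeneous Riccati equation in $\mathcal{D}'(\Omega\setminus E)$; trivially $u\in W_0^{1,q}(\Omega)\subset W^{1,q}_{loc}(\Omega\setminus E)$. Removability of $E$ then produces an extension $\tilde u\in W^{1,q}_{loc}(\Omega)$ that satisfies $-\operatorname{div}(A(x,\nabla\tilde u))=|\nabla\tilde u|^q$ in $\mathcal{D}'(\Omega)$ and coincides with $u$ on $\Omega\setminus E$. Under the standing convention for this class of removability problems that $|E|=0$ (verifiable separately by exhibiting a cutoff counterexample at any Lebesgue density point of $E$), one has $\tilde u=u$ a.e.\ on $\Omega$ and hence $\nabla\tilde u=\nabla u$ a.e. Subtracting the two distributional identities forces $\mu=0$ in $\mathcal{D}'(\Omega)$, contradicting $\mu(K)>0$.

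The principal obstacle I foresee is the measure-construction step: one needs the genuine pointwise bound $\nu\le C_\ast\operatorname{Cap}_{1,s}$, not merely the $L^{s'}$ bound on the Bessel potential of $\nu$ that comes directly from the definition of capacity. Overcoming this requires an explicit appeal to Maz'ya's strong capacitary inequality (or, equivalently, to a Hedberg--Wolff trace-measure characterization) — classical but worth citing carefully. Secondary technical points, namely the upgrade from renormalized to distributional solution and the identification $\tilde u=u$, are routine consequences of $u\in W_0^{1,q}$ with $q\ge 1$ and of the Lebesgue-negligibility of removable sets, respectively.
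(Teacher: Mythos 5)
Your proposal follows the same overall strategy that the paper attributes to \cite[Theorem 3.9]{Ph1}: assume ${\rm Cap}_{1,s}(E)>0$ with $s=q/(q-p+1)$, manufacture a nonzero measure $\mu$ on a compact $K\subset E$ that is small in the $M^{1,s}(\Om)$ norm, invoke Theorem \ref{main-Ric} to solve the Riccati equation with datum $\mu$, note that the solution is a homogeneous solution away from $E$, and force a contradiction with removability. The preparatory Frostman-type measure construction (existence of $\nu$ on $K$ with $\nu(K)>0$ and $\nu(B)\le C\,{\rm Cap}_{1,s}(B)$) is indeed classical and in line with the Maz'ya--Verbitsky machinery the paper already cites, and the upgrade from renormalized to distributional solution is unproblematic since $u\in W_0^{1,q}(\Om)$ with $q\ge 1$ gives $|\nabla u|^{p-1}\in L^1(\Om)$.

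The one step that is not properly justified is the identification $\tilde u = u$ a.e. You rest this on the assertion ``$|E|=0$'', calling it a ``standing convention'' and saying it is ``verifiable separately by exhibiting a cutoff counterexample at any Lebesgue density point of $E$.'' But the theorem statement imposes no hypothesis $|E|=0$, and the conclusion ${\rm Cap}_{1,s}(E)=0$ is strictly stronger than $|E|=0$, so you cannot assume the latter while arguing for the former. If $|E|>0$, the extension $\tilde u$ provided by removability agrees with $u$ only on $\Om\setminus E$, which is then not a set of full measure, and nothing forces $\nabla\tilde u=\nabla u$ a.e.; the subtraction of the two distributional identities then does not localize to yield $\mu=0$. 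To close the gap you would need either to prove separately that any removable set in the paper's sense is Lebesgue-null (which is plausible but requires a genuine argument, not a one-line remark about density points), or to arrange that the measure $\mu$ you produce is carried by a compact set of zero Lebesgue measure (e.g.~by replacing $K$ with a suitable compact subset on which $\nu$ still charges positively while $|K|=0$, which again needs justification). As written, this is the substantive missing ingredient; the rest of the proposal is in harmony with the paper's intended route.
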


The proof of Theorem \ref{remove-S-nec} is based on Theorem \ref{main-Ric} and is similar to that of \cite[Theorem 3.9]{Ph1}. 

\begin{remark} By \cite[Theorem 3.8]{Ph1}, Theorem \ref{remove-S-nec} is sharp at least in the natural class of $A$-superharmonic functions in $\Omega$. Namely,
if $K$ is a compact set in $\Omega$ with ${\rm Cap}_{1,\,\frac{q}{q-p+1}}(K)=0$ then any solution $u$ to 
\begin{equation*}
\left\{\begin{array}{l}
-{\rm div} (A(x, \nabla u))\geq 0 ~in~ \mathcal{D}'(\Om),\\
 u\in W^{1, q}_{loc}(\Om\setminus K), ~ and\\
-{\rm div} (A(x, \nabla u)) =|\nabla u|^q ~ in~
 \mathcal{D}'(\Om\setminus K),
\end{array}
\right.
\end{equation*}
is also a solution to 
\begin{equation*}
\left\{\begin{array}{l}
 u\in W^{1, q}_{loc}(\Om), ~ and\\
-{\rm div} (A(x, \nabla u)) =|\nabla u|^q ~ in~
 \mathcal{D}'(\Om).
\end{array}
\right.
\end{equation*}
\end{remark}

The paper is organized as follows. In Section \ref{sec-2} we obtain some important comparison estimates that are needed for the proof of Theorem \ref{5hh23101312}.
The proof of good-$\lambda$ type bounds, Theorem \ref{5hh23101312}, is given in Section  \ref{sec-3}. Then in Section \ref{sec-4}, we prove 
Theorem \ref{101120143-p} and Corollary \ref{compactness}. Finally, we obtain existence results for the Riccati type equation \eqref{Riccati}, Theorem \ref{main-Ric},
in Section \ref{sec-5}.

    \section{Local interior  and boundary estimates}\label{sec-2}
In this section, we obtain certain local interior and boundary comparison
estimates that are essential to our development later. First let us consider
the interior ones. With $u\in W_{loc}^{1,p}(\Omega)$ and for each ball $B_{2R}=B_{2R}(x_0)\subset\subset\Omega$, we consider the unique solution $w\in W_{0}^{1,p}(B_{2R})+u$
    to the  equation 
    \begin{equation}
    \label{111120146}\left\{ \begin{array}{rcl}
    - \operatorname{div}\left( {A(x,\nabla w)} \right) &=& 0 \quad in \quad B_{2R}, \\ 
    w &=& u\quad \text{on} \quad \partial B_{2R}.  
    \end{array} \right.
    \end{equation}
		
    We first recall the following  version of interior Gehring's lemma that was proved in \cite[Theorem 6.7]{Giu}. 
    \begin{lemma} \label{111120147} Let $w$ be as in \eqref{111120146}.
    	There exist  constants $\theta_1>p$ and $C>0$ depending only on $n, p, \Lambda$ such that the following estimate      
    	\begin{equation}\label{111120148}
    	\left(\fint_{B_{\rho/2}(y)}|\nabla w|^{\theta_1} dxdt\right)^{\frac{1}{\theta_1}}\leq C\left(\fint_{B_{\rho}(y)}|\nabla w|^{p-1} dx\right)^{\frac{1}{p-1}}
    	\end{equation}holds 
    	for all  $B_{\rho}(y)\subset B_{2R}$. 
    \end{lemma}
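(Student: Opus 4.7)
The estimate is a two-sided self-improvement of a reverse Hölder inequality for $|\nabla w|$. My plan is the classical Gehring scheme: first establish a Caccioppoli estimate, then use Sobolev--Poincaré to convert it into a reverse Hölder inequality at the natural exponent $p$, and finally invoke Gehring's lemma to raise the left-hand exponent above $p$ and, by the downward self-improvement available for gradients of $A$-harmonic functions, lower the right-hand exponent down to $p-1$.

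Testing the homogeneous equation \eqref{111120146} against $\eta^{p}(w-(w)_{B_{\rho}(y)})$ for a standard cutoff $\eta\in C_c^\infty(B_{\rho}(y))$ with $\eta\equiv 1$ on $B_{\rho/2}(y)$, and using \eqref{condi1}--\eqref{condi2}, yields the Caccioppoli bound
\[
\fint_{B_{\rho/2}(y)}|\nabla w|^{p}\,dx \;\leq\; \frac{C}{\rho^{p}}\fint_{B_{\rho}(y)}\bigl|w-(w)_{B_{\rho}(y)}\bigr|^{p}\,dx.
\]
Applying Sobolev--Poincaré with exponent $s:=np/(n+p)<p$ to the right-hand side produces the reverse Hölder inequality
\[
\Bigl(\fint_{B_{\rho/2}(y)}|\nabla w|^{p}\,dx\Bigr)^{1/p} \;\leq\; C\Bigl(\fint_{B_{\rho}(y)}|\nabla w|^{s}\,dx\Bigr)^{1/s},
\]
and Gehring's lemma (\cite[Chapter 6]{Giu}) upgrades this to the existence of some $\theta_{1}>p$ with
\[
\Bigl(\fint_{B_{\rho/2}(y)}|\nabla w|^{\theta_{1}}\,dx\Bigr)^{1/\theta_{1}} \;\leq\; C\Bigl(\fint_{B_{\rho}(y)}|\nabla w|^{p}\,dx\Bigr)^{1/p}.
\]

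The main obstacle is to replace the exponent $p$ on the right by $p-1$. In the singular range $\frac{3n-2}{2n-1}<p\leq 2-\frac{1}{n}$ one checks that $p-1<s$, so this bound lies strictly below Gehring's natural reach. To cross it I would invoke the $C^{1,\alpha}$-regularity theory for $A$-harmonic functions of DiBenedetto and Tolksdorf, which, under \eqref{condi1}--\eqref{condi2}, provides a local $L^{\infty}$ gradient estimate of the form
\[
\|\nabla w\|_{L^{\infty}(B_{\rho/2}(y))} \;\leq\; C\Bigl(\fint_{B_{\rho}(y)}|\nabla w|^{t}\,dx\Bigr)^{1/t}\qquad\text{for every }t>0.
\]
Taking $t=p-1$ and combining with the Gehring estimate above on a slightly shrunken concentric ball (absorbed by a standard chaining/covering argument) delivers the claimed inequality. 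This passage below the Poincaré threshold $s$ is the delicate step; all other ingredients are routine manipulations of Caccioppoli and reverse Hölder inequalities.
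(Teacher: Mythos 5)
The decisive step in your plan---lowering the right-hand exponent from $p$ to $p-1$---is where the argument breaks. You propose to invoke the DiBenedetto--Tolksdorf $C^{1,\alpha}$ theory and the resulting local $L^{\infty}$ gradient bound with arbitrary $t>0$ on the right. That theory requires the coefficients of $A$ to depend at least H\"older continuously on $x$; here $A(x,\xi)$ is only assumed to be a Carath\'eodory function satisfying \eqref{condi1}--\eqref{condi2}, i.e.\ merely \emph{measurable} in $x$ (the $(\delta,R_0)$-BMO condition in this paper even permits genuine discontinuities). Under such assumptions no local $L^{\infty}$ gradient estimate for $A$-harmonic functions holds---this is the quasilinear analogue of the failure of Lipschitz regularity for linear divergence-form equations with merely measurable coefficients. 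Note also that if the bound $\|\nabla w\|_{L^{\infty}(B_{\rho/2})}\leq C(\fint_{B_{\rho}}|\nabla w|^{p-1}\,dx)^{1/(p-1)}$ were available with $C=C(n,p,\Lambda)$, Lemma~\ref{111120147} would hold trivially with $\theta_{1}=\infty$, making the assertion of a finite $\theta_{1}>p$ vacuous; the finiteness of $\theta_{1}$ is precisely a signal that $L^{\infty}$ control is out of reach here.

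The paper does not prove this lemma: it simply cites \cite[Theorem 6.7]{Giu}. The correct way to bridge the gap $p-1<s=np/(n+p)$ is the purely measure-theoretic ``downward self-improvement'' of reverse H\"older inequalities: once one has the reverse H\"older inequality at the pair $(p,s)$ on all concentric balls (your Caccioppoli plus Sobolev--Poincar\'e step), a covering of $B_{\rho_1}$ by a controlled family of small balls whose doubles lie in $B_{\rho_2}$, together with interpolation between $L^{p-1}$ and $L^{p}$ and the standard absorption/iteration lemma for monotone quantities, shows that the right exponent can be lowered to any $t\in(0,s)$, in particular to $t=p-1$. Gehring's upward improvement then produces the finite $\theta_{1}>p$ on the left. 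No PDE-specific regularity beyond Caccioppoli is needed for the lowering step, and the constants depend only on $n,p,\Lambda$, which is consistent with the statement. If you replace the appeal to $C^{1,\alpha}$ theory by this self-improvement argument, the rest of your outline (Caccioppoli, Sobolev--Poincar\'e, Gehring) is sound.
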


    The next lemma gives an estimate for the difference $\nabla u-\nabla w$. This is one of the key estimates of this paper.
		We remark that earlier this kind of comparison estimates is known only in the case $p>2-\frac{1}{n}$ (see \cite{Mi2, Duzamin2}). Here we are able to obtain it for $\frac{3n-2}{2n-1}< p\leq 2-\frac{1}{n}$.
		
    \begin{lemma}\label{111120149}Let $w$ be in \eqref{111120146}. Assume that $\frac{3n-2}{2n-1}<p\leq 2-\frac{1}{n}$. Then it holds that 
    	\begin{align}\label{1111201410+}
    	\left(	\fint_{B_{2R}}|\nabla u-\nabla w|^{\gamma_0}dx\right)^{\frac{1}{\gamma_0}}&\leq C \left[\frac{|\mu|(B_{2R})}{R^{n-1}}\right]^{\frac{1}{p-1}} + \nonumber
			\\&\qquad+C\frac{|\mu|(B_{2R})}{R^{n-1}}\left(	\fint_{B_{2R}}|\nabla u|^{\gamma_0}dx\right)^{\frac{2-p}{\gamma_0}},
    	\end{align}
    	for some $\frac{2-p}{2}\leq \gamma_0<\frac{(p-1)n}{n-1}\leq 1$.  In particular, for any $\varepsilon>0$ one can find $C_\varepsilon>0$ such that  
    	\begin{align}\label{1111201410}
    \left(	\fint_{B_{2R}}|\nabla u-\nabla w|^{\gamma_0}dx\right)^{\frac{1}{\gamma_0}}\leq C_\varepsilon \left[\frac{|\mu|(B_{2R})}{R^{n-1}}\right]^{\frac{1}{p-1}}+\varepsilon \left(	\fint_{B_{2R}}|\nabla u|^{\gamma_0}dx\right)^{\frac{1}{\gamma_0}}.
    	\end{align}  
    \end{lemma}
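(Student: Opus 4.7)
The plan is to combine a truncated energy identity, the singular-case monotonicity of $A$ from \eqref{condi2}, and a Boccardo--Gallouet-style level-set decomposition, adapted to the sub-quadratic range $p\leq 2-\frac{1}{n}$. The first step is to use a suitable renormalized regularization of $T_k(u-w)$ as a test function in both equations; since $w$ solves the homogeneous problem, subtracting gives
\[
\int_{\{|u-w|<k\}}\langle A(x,\nabla u)-A(x,\nabla w),\nabla u-\nabla w\rangle\,dx\ \leq\ k\,|\mu|(B_{2R}).
\]
For $p<2$, assumption \eqref{condi2} implies the singular monotonicity $\langle A(x,\xi_1)-A(x,\xi_2),\xi_1-\xi_2\rangle \geq c\,|\xi_1-\xi_2|^2(|\xi_1|+|\xi_2|)^{p-2}$, so, writing $z:=u-w$ and $G:=|\nabla u|+|\nabla w|$, the previous identity is upgraded to the weighted energy bound
\[
\int_{\{|z|<k\}}\frac{|\nabla z|^2}{G^{2-p}}\,dx\ \leq\ C\,k\,|\mu|(B_{2R}).
\]

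To pass from this weighted $L^2$-inequality to the desired $L^{\gamma_0}$-estimate, I would use the factorisation $|\nabla z|^{\gamma_0}=\bigl(|\nabla z|^2/G^{2-p}\bigr)^{\gamma_0/2}\,G^{(2-p)\gamma_0/2}$ together with H\"older's inequality with conjugate exponents $2/\gamma_0$ and $2/(2-\gamma_0)$, which produces
\[
\int_{\{|z|<k\}}|\nabla z|^{\gamma_0}\,dx\ \leq\ C\bigl(k\,|\mu|(B_{2R})\bigr)^{\gamma_0/2}\!\left(\int_{B_{2R}}G^{\beta}\,dx\right)^{(2-\gamma_0)/2},\qquad \beta:=\frac{(2-p)\gamma_0}{2-\gamma_0}.
\]
The admissible range $\gamma_0<\frac{(p-1)n}{n-1}$, combined with the elementary fact $\frac{(p-1)n}{n-1}\leq 2(p-1)$ for $n\geq 2$, forces both $\beta<p-1$ and $\beta\leq\gamma_0$, so $\int G^{\beta}$ is controlled by a volume factor times a power of $\int G^{\gamma_0}$, and the latter reduces to $\int_{B_{2R}}|\nabla u|^{\gamma_0}$ via an $L^{\gamma_0}$-comparison between $\nabla w$ and $\nabla u$ deduced from Gehring's Lemma~\ref{111120147} and the classical Boccardo--Gallouet Marcinkiewicz bound $|\nabla u|\in L^{n(p-1)/(n-1),\infty}$. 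For the complementary set $\{|z|\geq k\}$ I would use a Sobolev/Marcinkiewicz argument of Boccardo--Gallouet type to derive a distributional estimate $|\{|z|\geq k\}|\leq Ck^{-n(p-1)/(n-p)}\cdots$ and then apply H\"older against the weak-$L^{n(p-1)/(n-1)}$ bound on $\nabla z$. Adding the two contributions and optimising the free parameter at the scale $k\sim\bigl(|\mu|(B_{2R})/R^{n-1}\bigr)^{1/(p-1)}R$ balances them and yields \eqref{1111201410+}: the scale-invariant summand $(|\mu|/R^{n-1})^{1/(p-1)}$ and the $G^{\beta}$-induced correction $(|\mu|/R^{n-1})\bigl(\fint|\nabla u|^{\gamma_0}\bigr)^{(2-p)/\gamma_0}$. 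The variant \eqref{1111201410} is then immediate from a single application of Young's inequality to the latter term with conjugate exponents $\tfrac{1}{p-1}$ and $\tfrac{1}{2-p}$.

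The hardest part is the rigorous execution of the test-function step inside the renormalized framework of Definition~\ref{derenormalized}: because $u\notin W^{1,p}(B_{2R})$ in the singular regime, $T_k(u-w)$ is not directly admissible, and one must first truncate $u$ further by $T_h(u)$, carefully track the truncation measures $\lambda_h^{\pm}$ concentrated on $\{|u|=h\}$, and pass to the limit $h\to\infty$ with control on the singular part $\mu_s$ of $\mu$. A secondary technical hurdle is the $L^{\gamma_0}$-comparison between $\nabla w$ and $\nabla u$ required in the middle step: since $\gamma_0<1$, no Poincar\'e inequality is available at that integrability level, which forces a delicate combination of Gehring's reverse-H\"older inequality with the measure-data Marcinkiewicz estimate, and it is precisely at this juncture that the lower bound $p>\tfrac{3n-2}{2n-1}$ on the exponent is used.
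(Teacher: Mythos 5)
Your proposal takes a genuinely different route from the paper's proof, and there are two concrete gaps that make me doubt it can be made rigorous in the stated range of $p$.

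\textbf{The test function is the wrong one.} You test with $T_k(u-w)$, which gives $\int_{\{|z|<k\}} g(u,w)\,dx \leq Ck\,|\mu|(B_{2R})$ where $g(u,w)=|\nabla(u-w)|^2/(|\nabla u|+|\nabla w|)^{2-p}$. The paper instead tests with $T_{h,k^{1-\alpha}}\bigl(|u-w|^{-\alpha}(u-w)\bigr)$, introducing a free parameter $\alpha$ and producing the \emph{weighted} energy bound $\int_{\{|z|<k\}} |z|^{-\alpha}g(u,w)\,dx\leq Ck^{1-\alpha}|\mu|(B_{2R})$. Tracking the exponent bookkeeping in the paper (set $\alpha_0=\alpha/p$), the achievable exponent is $\gamma_0=\frac{(1-\alpha_0)n}{n-\alpha_0}$; with $\alpha_0=0$ (your case) this gives $\gamma_0=1$, which \emph{violates} the constraint $\gamma_0<\frac{(p-1)n}{n-1}\leq 1$ that is in force throughout the singular range $p\leq 2-\frac{1}{n}$. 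The constraints on $\alpha_0$ needed to make the argument close are $\tfrac{-1+(2-p)n}{n-p}<\alpha_0<\tfrac12$, and this interval is nonempty precisely when $p>\tfrac{3n-2}{2n-1}$, with $\alpha_0>0$ forced whenever $p\leq 2-\tfrac1n$. So the weight $|u-w|^{-\alpha}$ is not a technicality; it is the mechanism that lets the argument descend below $p=2-\frac1n$, and plain truncation cannot reproduce it.

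\textbf{The $\{|z|\geq k\}$ and ``$L^{\gamma_0}$-comparison'' steps are circular.} You propose to control the far range by ``appl[ying] H\"older against the weak-$L^{n(p-1)/(n-1)}$ bound on $\nabla z$.'' But such a weak-Lebesgue bound for the \emph{difference} $\nabla z=\nabla u-\nabla w$ is precisely the content of the lemma being proved (the difference does not itself solve an equation of the form treated in \cite{11DMOP}, so the Dal Maso--Murat--Orsina--Prignet bound does not apply to $z$). Similarly, the step that ``reduces $\int_{B_{2R}}G^{\gamma_0}$ to $\int_{B_{2R}}|\nabla u|^{\gamma_0}$'' has no rigorous basis from Gehring's lemma plus a Marcinkiewicz bound: Lemma \ref{111120147} is a reverse-H\"older estimate for $\nabla w$ alone, and the usual energy comparison for $w$ only gives $\int_{B_{2R}}|\nabla w|^p\leq C\int_{B_{2R}}|\nabla u|^p$, which reintroduces the $p$-th power of $\nabla u$ that the statement must avoid. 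The paper sidesteps both issues by never comparing $\nabla w$ to $\nabla u$ in $L^{\gamma_0}$ at all: it bounds the auxiliary quantity $M=\int_{B_{2R}}|\nabla z|\,|z|^{-\alpha/p}$, splits $|\nabla z|\leq C\bigl(g^{1/p}+g^{1/2}|\nabla u|^{(2-p)/2}\bigr)$ so that only $\nabla u$ (never $\nabla w$) appears on the right, and applies the weak-Lorentz bound on $|z|^{-\alpha}g$ produced by the level-set decomposition. The extra factor $|z|^{-\alpha/p}$ is what allows Sobolev's inequality to convert $M$ into $\int |\nabla z|^{\gamma_0}$ at the exponent $\gamma_0<1$. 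Your ``optimisation in $k$'' has no analogue of this step.

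In short, the overall strategy (monotonicity, truncated energy identity, level-set/optimisation) points in the right direction, but the absence of the singular weight $|u-w|^{-\alpha}$ in the test function and the unresolved comparison of $\nabla w$ to $\nabla u$ in $L^{\gamma_0}$ are genuine gaps, not mere technical hurdles to fill in.
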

\begin{proof} For any $\varphi\in W_0^{1,p}(B_{2R})$, we have 
\begin{align}\label{es0}
\int_{B_{2R}}\langle A(x,\nabla u)-A(x,\nabla w),\nabla\varphi\rangle dx=\int_{B_{2R}}\varphi d\mu.
\end{align}

We now set 
	\begin{equation*}
T_{h,m}(s)=\left\{ \begin{array}{l}
T_m(s)~~~~~~~~~~~~~~~~~\text{if}~~|s|\geq 2h,\\ 
	2\operatorname{sgn}(s)(|s|-h) ~~~\text{ if }~h<|s|<2h, \\ 
	0 ~~~~~~~~~~~~~~~~~~~~~~~~\text{if}~~|s|\leq h,
	\end{array} \right.
	\end{equation*}  
	for $m>2h>0$.                
 It is easy to see that we can take $\varphi=T_{h,k^{1-\alpha}}(|u-w|^{-\alpha}(u-w))$ with $\alpha\in (-\infty,1)$ and $0<h< \frac{k^{1-\alpha}}{2}$ as a test function in \eqref{es0}. This gives
 \begin{align*}
 \int_{B_{2R}\cap\{x: (2h)^{\frac{1}{1-\alpha}}<|u-w|<k\}}|u-w|^{-\alpha}g(u,w)dx\leq C k^{1-\alpha} |\mu|(B_{2R}),
 \end{align*}
 where 
 \begin{align}\label{g-def}
 g(u,w)=\frac{|\nabla (u-w)|^2}{(|\nabla w|+|\nabla u|)^{2-p}}.
 \end{align}

Thus sending $h\rightarrow 0$ we get
 \begin{align*}
 \int_{B_{2R}\cap\{x:|u-w|<k\}}|u-w|^{-\alpha}g(u,w)dx\leq C k^{1-\alpha} |\mu|(B_{2R}).
 \end{align*}

 We now estimate $|u-w|^{-\alpha}g(u,w)$ in $L^\gamma(B_{2R})$ for some appropriate $\gamma$. To do so we employ the method of \cite{bebo} (see also \cite{55Ph0}). For $k,\lambda\geq 0$, we let $$\Phi(k,\lambda)=|\{x: |u-w|>k,|u-w|^{-\alpha}g(u,w)>\lambda\}\cap B_{2R}|.$$
As $\lambda\mapsto \Phi(k,\lambda)$ is non-increasing, we find
\begin{align*}
\Phi(0,\lambda)&\leq \frac{1}{\lambda}\int_{0}^{\lambda}\Phi(0,s)ds\leq 
\Phi(k,0)+\frac{1}{\lambda}\int_{0}^{\lambda}\Phi(0,s)-\Phi(k,s) ds
\\&=|\{x:|u-w|>k\}\cap B_{2R}|+
\\&\quad  +\frac{1}{\lambda}\int_{0}^{\lambda}|\{x:|u-w|\leq k,|u-w|^{-\alpha}g(u,w)>s\}\cap B_{2R}| ds
\\&\leq k^{-\beta}\|u-w\|_{L^\beta(B_{2R})}^\beta+\frac{1}{\lambda}\int_{B_{2R}\cap\{x:|u-w|\leq k\}}|u-w|^{-\alpha}g(u,w) dx
\\&\leq k^{-\beta}\|u-w\|_{L^\beta(B_{2R})}^\beta+\frac{C k^{1-\alpha}}{\lambda} |\mu|(B_{2R}),
\end{align*} 
for any $\beta > 0$. Then choosing $$k=\left[\frac{\lambda \|u-w\|_{L^\beta(B_{2R})}^\beta}{|\mu|(B_{2R}) }\right]^{\frac{1}{1-\alpha+\beta}},$$
we obtain
\begin{align*}
\lambda^{\frac{\beta}{1-\alpha+\beta}}|\{x:|u-w|^{-\alpha}g(u,w)>\lambda\}\cap B_{2R}|\leq C |\mu|(B_{2R})^{\frac{\beta}{1-\alpha+\beta}} \|u-w\|_{L^\beta(B_{2R})}^{\frac{\beta(1-\alpha)}{1-\alpha+\beta}}
\end{align*}
for all $\lambda>0$. Thus by Holder's inequality, for $0<\gamma<\frac{\beta}{1-\alpha+\beta}$, we get 
\begin{align}\nonumber
&\int_{B_{2R}}|u-w|^{-\alpha\gamma}g(u,w)^\gamma dx
\\&\quad\leq C |B_{2R}|^{1-\frac{\gamma(1-\alpha+\beta)}{\beta}}\|[|u-w|^{-\alpha}g(u,w)]^\gamma\|_{L^{\frac{\beta}{\gamma(1-\alpha+\beta)},\infty}(B_{2R})}
\nonumber\\&\quad\leq 
C R^{n-\frac{n\gamma(1-\alpha+\beta)}{\beta}} |\mu|(B_{2R})^{\gamma}\|u-w\|_{L^\beta(B_{2R})}^{\gamma(1-\alpha)}.\label{es1}
\end{align}

We next define a quantity
$$M:=\int_{B_{2R}}|\nabla (u-w)|\, |u-w|^{-\frac{\alpha}{p}}.$$

Applying Sobolev's inequality for the function $|u-w|^{\frac{p-\alpha}{p}}$, we have 
\begin{align}
\int_{B_{2R}}|u-w|^{\frac{(p-\alpha)n}{p(n-1)}}\leq  C \left(\int_{B_{2R}}|\nabla |u-w|^{1-\frac{\alpha}{p}}|dx\right)^{\frac{n}{n-1}} =CM^{\frac{n}{n-1}}\label{es2}.
\end{align}

Then using Holder's inequality and  \eqref{es2}, we get
\begin{align}\nonumber
&\int_{B_{2R}}|\nabla (u-w)|^{\frac{(p-\alpha)n}{pn-\alpha}}\\
&\leq \left(\int_{B_{2R}}|u-w|^{\frac{(p-\alpha)n}{p(n-1)}}\right)^{\frac{\alpha(n-1)}{pn-\alpha}}\left(\int_{B_{2R}}|\nabla (u-w)| \, |u-w|^{-\frac{\alpha}{p}}dx\right)^{\frac{n(p-\alpha)}{pn-\alpha}}\nonumber\\
&\leq C M^{\frac{pn}{pn-\alpha}}\label{es9}.
\end{align}

Our next goal is to bound $M$. To this end, using $1<p<2$, we have  
\begin{align*}
|\nabla (u-w)|\leq C \left( g(u,w)^{1/p}+g(u,w)^{1/2}|\nabla u|^{\frac{2-p}{2}}\right),
\end{align*}
and thus
\begin{align}
&M \leq C\int_{B_{2R}} \left[|u-w|^{-\frac{\alpha}{p}}g(u,w)^{1/p}+|u-w|^{-\frac{\alpha}{p}}g(u,w)^{\frac{1}{2}}|\nabla u|^{\frac{2-p}{2}} \right].\label{es6}
\end{align}

We now assume that \begin{align}\label{ine1}
1/p<\frac{\beta}{1-\alpha+\beta},~~\beta=\frac{(p-\alpha)n}{p(n-1)}.
\end{align}
Thus, we can apply \eqref{es1}  to $\gamma=1/p$, to get
\begin{align}\nonumber
\int_{B_{2R}} |u-w|^{-\frac{\alpha}{p}}g(u,w)^{1/p}&\leq C R^{n-\frac{n(1-\alpha+\beta)}{p\beta}} |\mu|(B_{2R})^{1/p}\|u-w\|_{L^\beta(B_{2R})}^{(1-\alpha)/p}\\& \leq C R^{n-\frac{n(1-\alpha+\beta)}{p\beta}} |\mu|(B_{2R})^{1/p} M^{\frac{1-\alpha}{p-\alpha}}\label{es3},
\end{align}
where we used \eqref{es2} in  the last inequality.

Assume also that 
\begin{align}\label{es7}
\gamma_0:=\frac{(p-\alpha)n}{pn-\alpha}>\frac{2-p}{2}.
\end{align} 
Then by Holder's inequality with exponents $\frac{2\gamma_0}{2\gamma_0+p-2}$ and $\frac{2\gamma_0}{2-p}$,
\begin{align}\nonumber
&\int_{B_{2R}} |u-w|^{-\frac{\alpha}{p}}g(u,w)^{\frac{1}{2}}|\nabla u|^{\frac{2-p}{2}}
\\&\quad\leq \left(\int_{B_{2R}} |u-w|^{-\frac{2\alpha \gamma_0}{p(2\gamma_0+p-2)}}g(u,w)^{\frac{\gamma_0}{2\gamma_0+p-2}}\right)^{\frac{2\gamma_0+p-2}{2\gamma_0}} \left(\int_{B_{2R}} |\nabla u|^{\gamma_0}\right)^{\frac{2-p}{2\gamma_0}}\label{es5}.
\end{align}

We further restrict that 
\begin{align}\label{ine2}
\alpha<\frac{p}{2}, ~~\frac{\gamma_0}{2\gamma_0+p-2}<\frac{\beta}{1-\frac{2\alpha}{p} +\beta},
\end{align} 
which then by  \eqref{es1} gives
\begin{align}\nonumber
&\int_{B_{2R}} |u-w|^{-\frac{2\alpha \gamma_0}{p(2\gamma_0+p-2)}}g(u,w)^{\frac{\gamma_0}{2\gamma_0+p-2}}
\\&\quad \leq C R^{n-\frac{n\gamma_0(1-\frac{2\alpha}{p} +\beta)}{\beta(2\gamma_0+p-2)}} |\mu|(B_{2R})^{\frac{\gamma_0}{2\gamma_0+p-2}}\|u-w\|_{L^\beta(B_{2R})}^{\frac{\gamma_0(1-\frac{2\alpha}{p})}{2\gamma_0+p-2}}
\nonumber\\& \quad\leq C R^{n-\frac{n\gamma_0(1-\frac{2\alpha}{p} +\beta)}{\beta(2\gamma_0+p-2)}} |\mu|(B_{2R})^{\frac{\gamma_0}{2\gamma_0+p-2}}M^{\frac{\gamma_0(p-2\alpha)}{(p-\alpha)(2\gamma_0+p-2)}}.\label{es4}
\end{align}

Hence, combining \eqref{es6}, \eqref{es3}, \eqref{es5}, and \eqref{es4} we have 
\begin{align}\nonumber
M&\leq C R^{n-\frac{n(1-\alpha+\beta)}{p\beta}} |\mu|(B_{2R})^{1/p} M^{\frac{1-\alpha}{p-\alpha}}+
\\&\quad +\left(R^{n-\frac{n\gamma_0(1-\frac{2\alpha}{p} +\beta)}{\beta(2\gamma_0+p-2)}} |\mu|(B_{2R})^{\frac{\gamma_0}{2\gamma_0+p-2}}M^{\frac{\gamma_0(p-2\alpha)}{(p-\alpha)(2\gamma_0+p-2)}}\right)^{\frac{2\gamma_0+p-2}{2\gamma_0}} \times\nonumber\\
& \qquad\qquad\qquad \times\left(\int_{B_{2R}} |\nabla u|^{\gamma_0}\right)^{\frac{2-p}{2\gamma_0}}\label{es8}
\end{align}
provided  that \eqref{ine1}, \eqref{es7}, and \eqref{ine2} are satisfied.

Let $\alpha_0=\frac{\alpha}{p}<1/2$ so that $\beta=\frac{(1-\alpha_0)n}{n-1}$. We have 
\begin{align*}
1/p<\frac{\beta}{1-\alpha+\beta}~~~\Longleftrightarrow~~~p>\frac{n(2-\alpha_0)-1}{n-\alpha_0},
\end{align*}
\begin{align*}
\frac{\gamma_0}{2\gamma_0+p-2}<\frac{\beta}{1-\frac{2\alpha}{p} +\beta}~~~\Longleftrightarrow~~~p>\frac{n(2-\alpha_0)-1}{n-\alpha_0},
\end{align*}
and 
\begin{align*}
\gamma_0=\frac{(p-\alpha)n}{pn-\alpha}>\frac{2-p}{2} ~~~\Longleftrightarrow~~~p>\frac{2\alpha_0(n-1)}{n-\alpha_0}.
\end{align*}

Therefore, if  \begin{align*}
p>\frac{3n-2}{2n-1},
\end{align*} 
then  \eqref{ine1}, \eqref{es7}, and \eqref{ine2} hold  for any 
\begin{align*}
1/2>\alpha_0>\frac{-1+(2-p)n}{n-p}.
\end{align*}

With this, using Holder's inequality, we get from \eqref{es8} that
\begin{align}\nonumber
M&\leq C \left[R^{n-\frac{n(1-\alpha+\beta)}{p\beta}} |\mu|(B_{2R})^{1/p}\right]^{\frac{p-\alpha}{p-1}} +
\\&\quad+\left(R^{n-\frac{n\gamma_0(1-\frac{2\alpha}{p} +\beta)}{\beta(2\gamma_0+p-2)}} |\mu|(B_{2R})^{\frac{\gamma_0}{2\gamma_0+p-2}}\right)^{\frac{(2\gamma_0+p-2)(p-\alpha)}{p\gamma_0}} \times\nonumber
\\&\qquad\qquad \qquad \times \left(\int_{B_{2R}} |\nabla u|^{\gamma_0}\right)^{\frac{(p-\alpha)(2-p)}{p\gamma_0}}\label{es10}.
\end{align}

Thus it follows from \eqref{es9} and \eqref{es10} that 
\begin{align*}
&\left(\int_{B_{2R}}|\nabla (u-w)|^{\gamma_0}\right)^{\frac{p-\alpha}{p\gamma_0}} \leq C \left[R^{n-\frac{n(1-\alpha+\beta)}{p\beta}} |\mu|(B_{2R})^{1/p}\right]^{\frac{p-\alpha}{p-1}} +\\&\quad +\left(R^{n-\frac{n\gamma_0(1-\frac{2\alpha}{p} +\beta)}{\beta(2\gamma_0+p-2)}} |\mu|(B_{2R})^{\frac{\gamma_0}{2\gamma_0+p-2}}\right)^{\frac{(2\gamma_0+p-2)(p-\alpha)}{p\gamma_0}} \left(\int_{B_{2R}} |\nabla u|^{\gamma_0}\right)^{\frac{(p-\alpha)(2-p)}{p\gamma_0}}.
\end{align*}

That is, we obtain \eqref{1111201410+} with $\frac{2-p}{2}< \gamma_0<\frac{(p-1)n}{n-1}\leq 1$ as desired.

Finally, using Young's inequality, we get the bound \eqref{1111201410} which completes the proof of the lemma. 
\end{proof}

\medskip
The following proposition provides a useful estimate for the difference $\nabla u-\nabla v$ for a well-controlled locally Lipschitz function $v$.

\begin{proposition} \label{inter} Let $\mu\in\mathfrak{M}_b(\Omega)$ and $\frac{3n-2}{2n-1}<p\leq 2-\frac{1}{n}$. Let $\gamma_0$ be as in Lemma \ref{111120149}. There exists $v\in W^{1,p}(B_R)\cap W^{1,\infty}(B_{R/2})$ such that for any $\varepsilon>0$, 
	\begin{align*}
	\|\nabla v\|_{L^\infty(B_{R/2})}\leq C \left[\frac{|\mu|(B_{2R})}{R^{n-1}}\right]^{\frac{1}{p-1}}+C \left(\fint_{B_{2R}}|\nabla u|^{\gamma_0}\right)^{1/\gamma_0},
	\end{align*}
	and
	\begin{align*}
	\left(\fint_{B_{R}}|\nabla u-\nabla v|^{\gamma_0}dx\right)^{\frac{1}{\gamma_0}}&\leq C_\varepsilon \left[\frac{|\mu|(B_{2R})}{R^{n-1}}\right]^{\frac{1}{p-1}}+\\
	& \qquad \qquad C(([A]_{R_0})^{\kappa} +\varepsilon)\left(\fint_{B_{2R}}|\nabla u|^{\gamma_0}\right)^{1/\gamma_0}.
	\end{align*}
for some $C_\varepsilon=C(n,p,\Lambda,\varepsilon)>0$. Here $\kappa$ is a constant in $(0,1)$.
\end{proposition}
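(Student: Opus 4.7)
The plan is a two-step comparison: first replace $u$ by the $A$-harmonic extension $w$ of its boundary values in $B_{2R}$, then freeze the $x$-dependence of $A$ on $B_R$ to produce a homogeneous competitor $v$ with $x$-independent coefficients, which by classical regularity theory will automatically be locally Lipschitz.

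Concretely, let $w\in u+W_0^{1,p}(B_{2R})$ solve \eqref{111120146}. Lemma \ref{111120149} immediately supplies
\begin{align*}
\Bigl(\fint_{B_{2R}}|\nabla u-\nabla w|^{\gamma_0}\Bigr)^{1/\gamma_0}\leq C_\varepsilon\Bigl[\frac{|\mu|(B_{2R})}{R^{n-1}}\Bigr]^{\frac{1}{p-1}}+\varepsilon\Bigl(\fint_{B_{2R}}|\nabla u|^{\gamma_0}\Bigr)^{1/\gamma_0}.
\end{align*}
Next, with $\bar A_{B_R}(\xi):=\fint_{B_R}A(x,\xi)dx$, I define $v\in w+W_0^{1,p}(B_R)$ as the unique weak solution of $-\operatorname{div}(\bar A_{B_R}(\nabla v))=0$ in $B_R$ with $v=w$ on $\partial B_R$. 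Since $\bar A_{B_R}$ inherits the bounds \eqref{condi1}--\eqref{condi2} and is $x$-independent, the DiBenedetto--Tolksdorf--Uhlenbeck regularity theory for homogeneous $p$-Laplacian type equations, together with the reverse Hölder inequality, yields the interior bound $\|\nabla v\|_{L^\infty(B_{R/2})}\leq C\bigl(\fint_{B_R}|\nabla v|^{\gamma_0}\bigr)^{1/\gamma_0}$.

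The heart of the argument is an $L^{\gamma_0}$-comparison of $\nabla w$ with $\nabla v$. Subtracting the two weak formulations yields
\begin{align*}
\int_{B_R}\langle \bar A_{B_R}(\nabla w)-\bar A_{B_R}(\nabla v),\nabla\varphi\rangle\,dx=\int_{B_R}\langle \bar A_{B_R}(\nabla w)-A(x,\nabla w),\nabla\varphi\rangle\,dx
\end{align*}
for all $\varphi\in W_0^{1,p}(B_R)$. Because $p<2$ and $\gamma_0<1$, a naive test with $\varphi=w-v$ does not close; I would mimic the truncation scheme of Lemma \ref{111120149} by testing with $\varphi=T_{h,k^{1-\alpha}}(|w-v|^{-\alpha}(w-v))$ and exploiting the structural bound $|\nabla w-\nabla v|^2\leq C\langle \bar A_{B_R}(\nabla w)-\bar A_{B_R}(\nabla v),\nabla(w-v)\rangle(|\nabla w|+|\nabla v|)^{2-p}$ coming from \eqref{condi2}. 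The right-hand side is then controlled through Hölder's inequality using $|\bar A_{B_R}(\nabla w)-A(x,\nabla w)|\leq \Theta(A,B_R)(x)\,|\nabla w|^{p-1}$, while the $(\delta,R_0)$-BMO condition applied to $\Theta(A,B_R)$ combined with the self-improvement of integrability of $|\nabla w|$ afforded by Lemma \ref{111120147} promotes the oscillation to a small factor $([A]_{R_0})^{\kappa}$ for some $\kappa\in(0,1)$. Running the same Sobolev/parameter optimization as in Lemma \ref{111120149}, this should yield
\begin{align*}
\Bigl(\fint_{B_R}|\nabla w-\nabla v|^{\gamma_0}\Bigr)^{1/\gamma_0}\leq C\bigl([A]_{R_0}\bigr)^{\kappa}\Bigl(\fint_{B_{2R}}|\nabla w|^{\gamma_0}\Bigr)^{1/\gamma_0}.
\end{align*}

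This third step is the main obstacle: in the singular range $p\leq 2-\tfrac{1}{n}$ the standard energy test with $w-v$ fails, so the BMO perturbation has to be absorbed through the same truncation-plus-Sobolev chain as in Lemma \ref{111120149}, with the Gehring bootstrap tuning the Hölder exponents so that the small factor $[A]_{R_0}$ survives with a positive power $\kappa$. Once the three building blocks are in hand, the triangle inequality $|\nabla u-\nabla v|\leq |\nabla u-\nabla w|+|\nabla w-\nabla v|$ together with $\|\nabla w\|_{L^{\gamma_0}(B_{2R})}\leq \|\nabla u\|_{L^{\gamma_0}(B_{2R})}+\|\nabla u-\nabla w\|_{L^{\gamma_0}(B_{2R})}$ delivers both bounds claimed in the proposition; the $\varepsilon$-term originates from Lemma \ref{111120149}, and the factor $C([A]_{R_0})^{\kappa}$ from the $w$-to-$v$ comparison.
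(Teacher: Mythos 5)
Your two-step skeleton (compare $u$ to the $A$-harmonic lift $w$ via Lemma \ref{111120149}, then compare $w$ to the frozen-coefficient solution $v$, and conclude by triangle inequality plus the $L^\infty$ estimate for $v$) is the same as the paper's. But you misidentify where the singularity of $p$ matters, and this leads you to propose an unnecessary and likely unworkable detour in the $w$-to-$v$ step.

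The truncation scheme of Lemma \ref{111120149} is needed only because $u$ is a measure-data solution, so $\nabla u$ may fail to be in $L^1_{loc}$ when $p\le 2-\tfrac1n$. That pathology is gone once you pass to $w$: since $-\operatorname{div}(A(x,\nabla w))=0$ and $w\in u+W_0^{1,p}(B_{2R})$, both $w$ and $v$ are genuine $W^{1,p}$ energy solutions, and by Gehring (Lemma \ref{111120147}) $\nabla w$ even enjoys higher integrability $\theta_1>p$. Consequently the standard test $\varphi=w-v\in W_0^{1,p}(B_R)$ does close, after the usual Lieberman-type manipulation of the degenerate monotonicity for $p<2$ (write $|\nabla w-\nabla v|^p$ as a product of $[g(w,v)]^{p/2}$ and $(|\nabla w|+|\nabla v|)^{p(2-p)/2}$ and apply H\"older); no truncation is required, and the argument works for all $p>1$. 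The paper simply invokes \cite[Lemma 2.3 and Corollary 2.4]{55Ph2} for exactly this, obtaining
\[
\|\nabla v\|_{L^\infty(B_{R/2})}\le C\Bigl(\fint_{B_R}|\nabla w|^p\Bigr)^{1/p},\qquad
\fint_{B_R}|\nabla w-\nabla v|\,dx\le C([A]_{R_0})^\kappa\Bigl(\fint_{B_R}|\nabla w|^p\Bigr)^{1/p},
\]
and then uses the self-improving nature of the reverse H\"older inequality in Lemma \ref{111120147} to pass from $\bigl(\fint_{B_R}|\nabla w|^p\bigr)^{1/p}$ to $\bigl(\fint_{B_{2R}}|\nabla w|^{\gamma_0}\bigr)^{1/\gamma_0}$, followed by the quasi-triangle inequality in $L^{\gamma_0}$ ($\gamma_0<1$) and Lemma \ref{111120149} to replace $\nabla w$ by $\nabla u$. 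Your proposal instead tries to run the truncation-plus-Sobolev machinery for the pair $(w,v)$; besides being unnecessary, it is not at all clear that the scheme would preserve the small factor $([A]_{R_0})^\kappa$, and you yourself flag this with a ``should yield''. That is the genuine gap: the phantom obstacle you see in the $w$-to-$v$ step is not there, and the route you propose to surmount it is left unjustified precisely at the point where the smallness in $[A]_{R_0}$ must emerge.
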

\begin{proof} By \cite[Lemma 2.3 and Corollary 2.4]{55Ph2}, there exists $v\in W^{1,p}(B_R)\cap W^{1,\infty}(B_{R/2})$ such that 
	\begin{align*}
	\|\nabla v\|_{L^\infty(B_{R/2})}\leq C \left(\fint_{B_R}|\nabla w|^p\right)^{1/p},
	\end{align*}
	and 
	\begin{align*}
	\fint_{B_{R}}|\nabla w-\nabla v|dx\leq C ([A]_{R_0})^{\kappa} \left(\fint_{B_R}|\nabla w|^p\right)^{1/p},
	\end{align*}
	 for some $\kappa\in (0,1).$ Combining these with \eqref{111120148} in Lemma \ref{111120147},  \eqref{1111201410} in Lemma \ref{111120149}, we get the desired results.  
\end{proof}

\medskip

      Next, we focus on the corresponding estimates near the boundary. We recall that $\Omega$ is $(\delta_0,R_0)$-Reifenberg flat  with $\delta_0<1/2$. Fix $x_0\in \Omega$ and $0<R<R_0/10$. With $u\in W_0^{1,p}(\Omega)$ being a solution to \eqref{5hh070120148}, 
      we now consider the unique solution $w\in W_{0}^{1,p}(\Omega_{10R}(x_0))+u$
       to the following equation 
      \begin{equation}
      \label{111120146*}\left\{ \begin{array}{rcl}
      - \operatorname{div}\left( {A(x,\nabla w)} \right) &=& 0 \quad ~~~\text{in}\quad \Omega_{10R}(x_0), \\ 
      w &=& u\quad \quad \text{on} \quad \partial \Omega_{10R}(x_0). 
      \end{array} \right.
      \end{equation}
      Hereafter, the notation $\Omega_r(x)$ indicates the set $\Omega\cap B_r(x)$. 
      By \cite[Lemma 2.5]{55Ph0}, we have the following boundary counterpart of Lemma \ref{111120147}.
      \begin{lemma} \label{111120147*} Let $w$ be as in \eqref{111120146*}.
      	There exist  constants $\theta_1>p$ and $C>0$ depending only on $n,p,\delta_0,\Lambda$ such that the following estimate      
      	\begin{equation}\label{111120148*}
      	\left(\fint_{B_{\rho/2}(y)}|\nabla w|^{\theta_1} dxdt\right)^{\frac{1}{\theta_1}}\leq C\left(\fint_{B_{3\rho}(y)}|\nabla w|^{p-1} dx\right)^{\frac{1}{p-1}},
      	\end{equation}holds 
      	for all  $B_{3\rho}(y)\subset B_{10R}(x_0)$. 
      \end{lemma}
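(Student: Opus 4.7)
The statement is the boundary counterpart of Lemma~\ref{111120147}, and the natural strategy is to run the same Caccioppoli and Gehring scheme, using Reifenberg flatness to treat boundary balls uniformly. Since $u\in W^{1,p}_0(\Om)$ and $w-u\in W^{1,p}_0(\Om_{10R}(x_0))$, the zero extension $\bar w$ of $w$ across $\Om^c$ lies in $W^{1,p}_{\mathrm{loc}}(B_{10R}(x_0))$; any ball $B_{2\rho}(y)\subset B_{10R}(x_0)$ meeting $\partial\Om$ satisfies $|B_{2\rho}(y)\setminus\Om|\geq c_0\rho^n$ by the $(\delta_0,R_0)$-Reifenberg condition with $\delta_0<1/2$, so $\bar w$ vanishes on a set of measure at least $c_0\rho^n$ inside $B_{2\rho}(y)$. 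This density bound is the only new ingredient beyond the interior argument, and it makes the Sobolev--Poincar\'e constants uniform across interior and boundary balls.

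Testing \eqref{111120146*} with $\varphi=\eta^p(\bar w-k)$, where $\eta\in C^\infty_c(B_{3\rho/2}(y))$ is a cutoff equal to $1$ on $B_{\rho/2}(y)$ with $|\nabla\eta|\leq C/\rho$, and where one takes $k=\fint_{B_{3\rho/2}(y)}\bar w$ for balls well inside $\Om$ but $k=0$ for balls meeting the Reifenberg boundary, the growth and monotonicity conditions \eqref{condi1}--\eqref{condi2} combined with Young's inequality yield the Caccioppoli estimate
\[
\fint_{B_{\rho/2}(y)}|\nabla \bar w|^p\,dx \leq \frac{C}{\rho^p}\fint_{B_{3\rho/2}(y)}|\bar w-k|^p\,dx.
\]
Combining this with the Sobolev--Poincar\'e inequality at the subcritical Sobolev conjugate $p_*=np/(n+p)<p$ (mean-subtracted in the interior case, and exploiting the density lower bound in the boundary case) yields the reverse H\"older inequality
\[
\Bigl(\fint_{B_{\rho/2}(y)}|\nabla \bar w|^p\,dx\Bigr)^{1/p} \leq C \Bigl(\fint_{B_{3\rho/2}(y)}|\nabla \bar w|^{p_*}\,dx\Bigr)^{1/p_*}.
\]
Applying Gehring's self-improving lemma uniformly across all admissible balls then produces some $\theta_1>p$ depending only on $n,p,\delta_0,\Lambda$, together with the analogous inequality with $\theta_1$ in place of $p$ on the left.

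Finally, to lower the right-hand exponent from $p_*$ down to $p-1$, I would interpolate: by H\"older's inequality, $\bigl(\fint g^{p_*}\bigr)^{1/p_*}\leq \bigl(\fint g^{p-1}\bigr)^{\alpha/(p-1)}\bigl(\fint g^{\theta_1}\bigr)^{(1-\alpha)/\theta_1}$ with $\alpha\in(0,1)$ determined by $1/p_*=\alpha/(p-1)+(1-\alpha)/\theta_1$, and plugging this into the previous display on nested balls produces a self-improving estimate from which the exponent $p-1$ on the right emerges after a standard iteration and absorption argument; the cost is enlarging the right-hand ball from $B_{3\rho/2}(y)$ to $B_{3\rho}(y)$. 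For this interpolation to be legitimate we need $p-1<p_*$, equivalently $p^2-p<n$, which holds automatically throughout $\tfrac{3n-2}{2n-1}<p\leq 2-\tfrac{1}{n}$ for $n\geq 2$, since $p(p-1)<2\leq n$ there. The main obstacle is precisely the uniform Sobolev--Poincar\'e estimate across the Reifenberg boundary, for which the quantitative smallness $\delta_0<1/2$ is indispensable; once this is in hand, the rest of the proof parallels the interior Lemma~\ref{111120147}.
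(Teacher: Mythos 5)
The paper does not actually prove this lemma---it simply invokes \cite[Lemma 2.5]{55Ph0}---and your outline (zero extension of $w$ across $\Om^c$, Reifenberg density lower bound on $\Om^c$, boundary Caccioppoli, Sobolev--Poincar\'e with a nontrivial zero set, Gehring, and an interpolation/iteration to lower the right-hand exponent from $p_*$ to $p-1$) is the standard argument underlying that citation; note that the interior counterpart, Lemma \ref{111120147}, is likewise only cited (to \cite[Theorem 6.7]{Giu}). One small slip worth fixing: the density bound $|B_{2\rho}(y)\setminus\Om|\gtrsim\rho^n$ requires a \emph{smaller} concentric ball, say $B_{\rho}(y)$, to already meet $\partial\Om$, so that the boundary point lies deep enough in $B_{2\rho}(y)$ to carry a full Reifenberg half-cylinder; merely assuming $B_{2\rho}(y)$ meets $\partial\Om$ allows the boundary point to be too shallow. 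In the usual case split (either $B_{\rho}(y)\subset\Om$, or $B_{\rho}(y)$ meets $\partial\Om$ and one invokes Reifenberg flatness at such a boundary point with radius $\rho$) this is automatic, and with that adjustment the rest of the chain goes through.
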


  We also have the following analogues of Lemmas  \ref{111120149}.
  \begin{lemma}\label{111120149"} Assume that $\frac{3n-2}{2n-1}<p\leq 2-\frac{1}{n}$. Let $w$ be as in \eqref{111120146*} and $\gamma_0$ be as in Lemma  \ref{111120149}. Then we  have
  	\begin{align*}\nonumber
  \left(	\fint_{B_{10R}(x_0)}|\nabla (u-w)|^{\gamma_0}dx\right)^{\frac{1}{\gamma_0}}&\leq C\left[\frac{|\mu|(B_{10R}(x_0))}{R^{n-1}}\right]^{\frac{1}{p-1}}+\\
&\qquad +	C \frac{|\mu|(B_{10R}(x_0))}{R^{n-1}}\left(	\fint_{B_{10R}(x_0)}|\nabla u|^{\gamma_0}dx\right)^{\frac{2-p}{\gamma_0}}.
  	\end{align*}
   In particular, for any $\varepsilon>0$,
  	 	\begin{align}\label{1111201410"}
  	 \left(	\fint_{B_{10R}(x_0)}|\nabla (u- w)|^{\gamma_0}dx\right)^{\frac{1}{\gamma_0}}&\leq C_\varepsilon \left[\frac{|\mu|(B_{10R}(x_0))}{R^{n-1}}\right]^{\frac{1}{p-1}}+\nonumber\\
		&\qquad \varepsilon \left(	\fint_{B_{10R}(x_0)}|\nabla u|^{\gamma_0}dx\right)^{\frac{1}{\gamma_0}}.
  	 \end{align}
  \end{lemma}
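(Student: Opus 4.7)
The plan is to follow the argument of Lemma \ref{111120149} line by line, with the interior ball $B_{2R}$ replaced by $\Omega_{10R}(x_0)$, and with all Sobolev type estimates handled by extending the boundary datum by zero. Since $u-w\in W_0^{1,p}(\Omega_{10R}(x_0))$, it extends trivially by zero to $B_{10R}(x_0)$ (and indeed to all of $\RR^n$), so the Sobolev embedding used in \eqref{es2} is available without any intrinsic Sobolev inequality on the Reifenberg flat set. The Reifenberg flatness condition $\delta_0<1/2$ will only be used at the end to guarantee $|\Omega_{10R}(x_0)|\simeq|B_{10R}(x_0)|$ with constants depending only on $n$, so that averages over $\Omega_{10R}(x_0)$ and over $B_{10R}(x_0)$ are comparable.

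The first step is to take, for $\alpha\in(-\infty,1)$ and $0<h<k^{1-\alpha}/2$, the admissible test function $\varphi=T_{h,k^{1-\alpha}}(|u-w|^{-\alpha}(u-w))\in W_0^{1,p}(\Omega_{10R}(x_0))\cap L^\infty$ in the weak formulation
\[
\int_{\Omega_{10R}(x_0)}\langle A(x,\nabla u)-A(x,\nabla w),\nabla\varphi\rangle\,dx=\int_{\Omega_{10R}(x_0)}\varphi\,d\mu,
\]
which is justified by the usual renormalized-solution machinery (since the difference $u-w$ is the renormalized solution of the Dirichlet problem on $\Omega_{10R}(x_0)$ with datum $\mu$ restricted to that set, up to harmless technicalities). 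Letting $h\to0$ and using the monotonicity structure as in the proof of Lemma \ref{111120149}, one arrives at the same key bound
\[
\int_{\Omega_{10R}(x_0)\cap\{|u-w|<k\}}|u-w|^{-\alpha}g(u,w)\,dx\leq C\,k^{1-\alpha}|\mu|(B_{10R}(x_0)),
\]
with $g(u,w)$ as in \eqref{g-def}.

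The second step reproduces the level-set argument and the subsequent weak-type bound \eqref{es1}; the only change is that all integrals are over $\Omega_{10R}(x_0)$ and $R^{n}$ is replaced by $|\Omega_{10R}(x_0)|\simeq R^n$. In the third step one applies the standard Sobolev inequality to $|u-w|^{(p-\alpha)/p}$ extended by zero from $\Omega_{10R}(x_0)$ to $B_{10R}(x_0)$, giving the analogue of \eqref{es2} and consequently of \eqref{es9}. The fourth step combines Young's and Hölder's inequalities exactly as in \eqref{es6}--\eqref{es8}, producing a self-improving inequality for $M:=\int_{\Omega_{10R}(x_0)}|\nabla(u-w)|\,|u-w|^{-\alpha/p}\,dx$. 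Choosing $\alpha_0=\alpha/p$ in the same range
\[
\frac{-1+(2-p)n}{n-p}<\alpha_0<\frac12,
\]
which is nonempty precisely when $p>\frac{3n-2}{2n-1}$, yields $\frac{2-p}{2}<\gamma_0<\frac{(p-1)n}{n-1}$ and, after solving for $M$ and inserting back into the analogue of \eqref{es9}, the claimed bound on $(\fint_{B_{10R}(x_0)}|\nabla(u-w)|^{\gamma_0})^{1/\gamma_0}$. Finally, Young's inequality with exponents $\frac{1}{2-p}$ and $\frac{1}{p-1}$ absorbs the product $|\mu|(B_{10R}(x_0))R^{1-n}\cdot(\fint|\nabla u|^{\gamma_0})^{(2-p)/\gamma_0}$ into a small multiple of the gradient average and a large multiple of the potential-type term, yielding \eqref{1111201410"}.

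The only genuinely new ingredient compared with the interior case is handling the test function $\varphi$ at the boundary within the renormalized framework; this is the step I expect to require the most care, though it is by now a standard adaptation (using the admissibility of $T_k$-truncations and cutoffs against the defect measures $\lambda_k^\pm$). Everything else is formally identical to the interior proof.
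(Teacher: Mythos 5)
Your proposal is correct and is essentially the proof the paper intends: the paper gives no explicit proof of the boundary lemma, leaving it to the reader to transfer the interior argument verbatim, which is exactly what you do. Two small remarks are worth making. First, the concern you flag as ``requiring the most care'' --- handling the test function within the renormalized framework --- is not actually an issue at this stage: in the setup of \eqref{111120146*} one has $u \in W_0^{1,p}(\Omega)$, so $u - w \in W_0^{1,p}(\Omega_{10R}(x_0))$ in the classical Sobolev sense and the weak formulation with $\varphi \in W_0^{1,p}(\Omega_{10R}(x_0)) \cap L^\infty(\Omega_{10R}(x_0))$ holds directly, exactly as in the interior case. The defect-measure and approximation-by-$u_k$ machinery enters only in Section \ref{sec-3}, where the comparison lemmas are applied to the Sobolev approximants $u_k=T_k(u)$ of a renormalized solution rather than to the renormalized solution itself. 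Second, the comparability $|\Omega_{10R}(x_0)| \simeq |B_{10R}(x_0)|$ is not actually needed: in the boundary analogues of \eqref{es1}, \eqref{es3}, and \eqref{es4} the exponent on the measure of the domain of integration is positive (this is exactly the content of \eqref{ine1} and \eqref{ine2}), so the trivial inequality $|\Omega_{10R}(x_0)| \le |B_{10R}(x_0)|$ already yields the stated estimate with all averages taken over $B_{10R}(x_0)$, once $\nabla(u-w)$ and $\nabla u$ are extended by zero outside $\Omega$.
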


Using Lemma \ref{111120149"} we  derive the following boundary version of Proposition \ref{inter}.
   \begin{proposition} \label{boundary} Let $\mu\in\mathfrak{M}_b(\Omega)$ and $\frac{3n-2}{2n-1}<p\leq 2-\frac{1}{n}$.  Let $\gamma_0$ be as in Lemma  \ref{111120149}. For any $\varepsilon>0$, there exists $\delta_0=\delta_0(n,p,\Lambda,\varepsilon)\in(0,1)$ such that the following holds. If $\Omega$ is $(\delta_0,R_0)$-Reifenberg flat  and $u\in W_{0}^{1,p}(\Omega)$, $x_0\in\partial \Omega$, and $0<R<R_0/10$, then there exists a function $V\in W^{1,\infty}(B_{R/10}(x_0))$ 
such that   	
\begin{align*}
   	\|\nabla V\|_{L^\infty(B_{R/10}(x_0))}\leq C \left[\frac{|\mu|(B_{10R}(x_0))}{R^{n-1}}\right]^{\frac{1}{p-1}}+C \left(\fint_{B_{10R}}|\nabla u|^{\gamma_0}\right)^{1/\gamma_0},
   	\end{align*}
   	and
   	\begin{align*}
   &\left(\fint_{B_{R/10}(x_0)}|\nabla (u-V)|^{\gamma_0}dx\right)^{\frac{1}{\gamma_0}}\\&~~~~~\leq C_\varepsilon \left[\frac{|\mu|(B_{10R}(x_0))}{R^{n-1}}\right]^{\frac{1}{p-1}}+C(([A]_{R_0})^{\kappa} +\varepsilon)\left(\fint_{B_{10R}(x_0)}|\nabla u|^{\gamma_0}\right)^{1/\gamma_0}.
   	\end{align*}
   	for some $C_\varepsilon=C(n,p,\Lambda,\varepsilon)>0$. Here $\kappa$ is a constant in $(0,1)$.
   \end{proposition}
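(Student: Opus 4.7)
The plan is to follow exactly the same three-step template as in the proof of Proposition 2.4 (the interior version), replacing each interior ingredient by its boundary analogue on the Reifenberg-flat domain. Specifically, on the set $\Omega_{10R}(x_0)$ introduce the $A$-harmonic comparison function $w$ from \eqref{111120146*}, and then construct $V$ by a further approximation of $w$ by a solution of a frozen-coefficient equation on a half-ball obtained from the Reifenberg flatness of $\partial\Omega$.

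First I would invoke Lemma \ref{111120149"} to control $u-w$: estimate \eqref{1111201410"} already gives, on $B_{10R}(x_0)$, an $L^{\gamma_0}$-bound of $\nabla(u-w)$ by $C_\varepsilon[|\mu|(B_{10R}(x_0))/R^{n-1}]^{1/(p-1)}+\varepsilon(\fint_{B_{10R}(x_0)}|\nabla u|^{\gamma_0})^{1/\gamma_0}$. Next I would apply the boundary versions of Lemma 2.3 and Corollary 2.4 from \cite{55Ph2} (which are the boundary counterparts used implicitly in the proof of Proposition \ref{inter}) to the function $w$. On a $(\delta_0,R_0)$-Reifenberg flat domain with $[A]_{R_0}\le \delta$, these results produce a function $V\in W^{1,p}(\Omega_{10R}(x_0))\cap W^{1,\infty}(B_{R/10}(x_0))$ satisfying
\begin{align*}
\|\nabla V\|_{L^\infty(B_{R/10}(x_0))}&\le C\Bigl(\fint_{B_{10R}(x_0)}|\nabla w|^p\,dx\Bigr)^{1/p},\\
\fint_{B_{R/10}(x_0)}|\nabla w-\nabla V|\,dx&\le C([A]_{R_0})^{\kappa}\Bigl(\fint_{B_{10R}(x_0)}|\nabla w|^p\,dx\Bigr)^{1/p},
\end{align*}
for some $\kappa\in(0,1)$, provided $\delta_0$ is chosen small enough. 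This is where the Reifenberg-flatness is really used: one flattens $\partial\Omega$ by a rotation, extends $w$ by zero across the flat part, and compares with the $p$-harmonic replacement having constant (averaged) coefficients on the half-ball.

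The remaining step is to replace $(\fint_{B_{10R}(x_0)}|\nabla w|^p)^{1/p}$ by a controllable quantity. For this I would apply the boundary higher integrability estimate \eqref{111120148*} from Lemma \ref{111120147*} to bound $(\fint_{B_{10R}(x_0)}|\nabla w|^p)^{1/p}$ by $C(\fint_{B_{30R}(x_0)}|\nabla w|^{p-1}\,dx)^{1/(p-1)}$ (or, after simple manipulation, by the $L^{\gamma_0}$-average of $\nabla w$), and then split $\nabla w=\nabla u-(\nabla u-\nabla w)$, using \eqref{1111201410"} to absorb the $\nabla(u-w)$ contribution. After applying Young's inequality to dispose of the $(2-p)$-exponent coming from Lemma \ref{111120149"}, this produces the stated $L^\infty$ bound on $\nabla V$ and, by the triangle inequality in $L^{\gamma_0}(B_{R/10}(x_0))$, the stated bound on $\nabla(u-V)$ with the prefactor $C_\varepsilon$ on the $\mu$-term and the prefactor $C([A]_{R_0}^\kappa+\varepsilon)$ on the $\nabla u$-term.

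The main obstacle in principle is the boundary approximation of $w$ by a Lipschitz function $V$ up to the flat portion of $\partial\Omega$; this is a delicate step requiring sufficient smallness of both the Reifenberg constant and of $[A]_{R_0}$. However, this step is precisely the boundary counterpart of \cite[Lemma 2.3, Corollary 2.4]{55Ph2}, which has already been established in that reference in the form needed here. Hence the proof reduces, as in Proposition \ref{inter}, to combining those references with Lemmas \ref{111120147*} and \ref{111120149"}.
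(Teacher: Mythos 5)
Your plan matches the paper's proof exactly: the paper invokes \cite[Corollary 2.13]{55Ph2} (the boundary Lipschitz approximation of $w$ on a Reifenberg-flat domain, which is precisely the boundary analogue of the interior Lemma 2.3/Corollary 2.4 you cite), then combines it with the boundary Gehring estimate \eqref{111120148*} and the boundary comparison estimate \eqref{1111201410"}. One small but meaningful slip in your intermediate display: the boundary comparison from \cite{55Ph2} reads
$\fint_{B_{R/10}(x_0)}|\nabla w-\nabla V|\,dx\le C\bigl(([A]_{R_0})^{\kappa}+\varepsilon\bigr)\bigl(\fint_{B_R(x_0)}|\nabla w|^p\bigr)^{1/p}$,
with the extra additive $\varepsilon$ (not present in the interior version) coming from the flattening error; this $\varepsilon$ is precisely what forces $\delta_0$ to depend on $\varepsilon$, so it cannot be dropped the way you wrote it, although your final conclusion does recover the correct prefactor.
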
 
\begin{proof}
By \cite[Corollary 2.13]{55Ph2}, for any $\varepsilon>0$, there exists $\delta_0=\delta_0(n,p,\Lambda,\varepsilon)\in(0,1)$ such that if $\Omega$ is a $(\delta_0,R_0)$-Reifenberg flat domain  then we can find  $V\in W^{1,\infty}(B_{R/10}(x_0))$ satisfying  
\begin{align*}
\|\nabla V\|_{L^\infty(B_{R/10}(x_0))}\leq C \left(\fint_{B_R(x_0)}|\nabla w|^p\right)^{1/p},
\end{align*}
and 
\begin{align*}
\fint_{B_{R/10}(x_0)}|\nabla w-\nabla V|dx\leq C (([A]_{R_0})^{\kappa}+\varepsilon) \left(\fint_{B_R(x_0)}|\nabla w|^p\right)^{1/p},
\end{align*}
for some $\kappa\in (0,1)$. Combining these with \eqref{111120148*} in Lemma \ref{111120147*},  \eqref{1111201410"} in Lemma \ref{111120149"}, we arrive at the conclusion.  
\end{proof}

\section{Good-$\lambda$ type bounds on Reifenberg flat domains}\label{sec-3}

The purpose of this section is to prove Theorem  \ref{5hh23101312}.
 Our main tools here are Propositions \ref{inter} and \ref{boundary} and Lemma \ref{5hhvitali2} below.
This lemma can be viewed as a substitution for the Calder\'on-Zygmund-Krylov-Safonov decomposition.
The weighted version that is used here was obtained in \cite{55MePh2}. See also \cite{Wa, BW1, CaPe} for the case 
the weight $w\equiv 1$.

\begin{lemma}\label{5hhvitali2} Let $\Omega$ be a $(\delta,R_0)$-Reifenberg flat domain with $\delta<1/4$ and let $w$ be an $\mathbf{A}_\infty$ weight. Suppose that the sequence of balls $\{B_r(y_i)\}_{i=1}^L$ with centers $y_i\in\overline{\Omega}$ and  radius $r\leq R_0/4$ covers $\Omega$.  Let $E\subset F\subset \Omega$ be measurable sets for which 
 there exists $0<\varepsilon<1$ such that  
\begin{description}
 \item[1.] $w(E)<\varepsilon w(B_r(y_i))$ for all $i=1,...,L$, and 
 \item[2.] for all $x\in \Omega$, $\rho\in (0,2r]$, we have       
	 $w(E\cap B_\rho(x))\geq \varepsilon w(B_\rho(x)) \Longrightarrow B_\rho(x)\cap \Omega\subset F$. 
	\end{description}
 Then $
	w(E)\leq C \varepsilon w(F)$         
	for a constant $C$ depending only on $n$ and $[w]_{\mathbf{A}_\infty}$.
\end{lemma}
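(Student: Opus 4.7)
The plan is to adapt the standard Calder\'on--Zygmund--Krylov--Safonov stopping-time argument to the weight $w$, constructing a Vitali cover inside $\Omega$ and using Reifenberg flatness only at the very end to compare $w(B)$ with $w(B\cap\Omega)$.

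For $w$-a.e.\ $x\in E$, define the stopping radius
$$r_x := \sup\bigl\{\rho>0\,:\, w(E\cap B_\rho(x))\geq\varepsilon\, w(B_\rho(x))\bigr\}.$$
This supremum is strictly positive because $w\in\mathbf{A}_\infty$ is doubling, so the Lebesgue differentiation theorem for $w\,dx$ gives $w(E\cap B_\rho(x))/w(B_\rho(x))\to 1$ as $\rho\to 0^+$ at a.e.\ $x\in E$. On the other hand, any $x\in\overline{\Omega}$ lies in some $B_r(y_i)$ from the cover, hence $B_r(y_i)\subset B_{2r}(x)$, and hypothesis (1) gives for every $\rho\geq 2r$:
$$w(E\cap B_\rho(x))\leq w(E)<\varepsilon\, w(B_r(y_i))\leq\varepsilon\, w(B_\rho(x)),$$
so $r_x<2r$. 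The same comparison shows $w(E\cap B_{5r_x}(x))\leq\varepsilon\, w(B_{5r_x}(x))$ whether $5r_x\leq 2r$ (use the supremum definition) or $5r_x>2r$ (use the inclusion with $B_r(y_i)$). I then apply the Vitali $5r$-covering lemma to extract a countable disjoint subfamily $\{B_{r_{x_j}}(x_j)\}$ with $E\subset\bigcup_j B_{5r_{x_j}}(x_j)$ up to a $w$-null set; each $x_j\in\Omega$, each $r_{x_j}\in(0,2r]$, and by a limit argument $w(E\cap B_{r_{x_j}}(x_j))\geq\varepsilon\, w(B_{r_{x_j}}(x_j))$, so hypothesis (2) forces $B_{r_{x_j}}(x_j)\cap\Omega\subset F$. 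Combining with the doubling of $w$ produces
\begin{equation*}
w(E)\leq\sum_j w(E\cap B_{5r_{x_j}}(x_j))\leq\varepsilon\sum_j w(B_{5r_{x_j}}(x_j))\leq C_1\,\varepsilon\sum_j w(B_{r_{x_j}}(x_j))
\end{equation*}
with $C_1=C_1(n,[w]_{\mathbf{A}_\infty})$.

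The final step --- replacing each $w(B_{r_{x_j}}(x_j))$ by $w(B_{r_{x_j}}(x_j)\cap\Omega)$ so that disjointness bounds the resulting sum by $w(F)$ --- is where Reifenberg flatness enters. Because $\delta<1/4$ and $r_{x_j}<2r\leq R_0/2$, one has the Lebesgue-density bound $|B_{r_{x_j}}(x_j)\cap\Omega|\geq c_n\,|B_{r_{x_j}}(x_j)|$ with $c_n>0$ depending only on $n$ (either the ball lies entirely in $\Omega$, or one compares with a ball centered at the nearest point of $\partial\Omega$ and invokes the two-hyperplane sandwich). The equivalent form of the $\mathbf{A}_\infty$ condition --- for every $\sigma<1$ there exists $\eta<1$ such that $|S|\leq\sigma|B|\Rightarrow w(S)\leq\eta\,w(B)$ --- then upgrades this into $w(B_{r_{x_j}}(x_j)\cap\Omega)\geq c_2\,w(B_{r_{x_j}}(x_j))$ with $c_2=c_2(n,[w]_{\mathbf{A}_\infty})>0$, and disjointness together with $B_{r_{x_j}}(x_j)\cap\Omega\subset F$ closes the argument: $w(E)\leq (C_1/c_2)\,\varepsilon\, w(F)$.

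The main obstacle I anticipate is precisely this last passage from a Lebesgue-density lower bound (supplied by Reifenberg flatness) to a $w$-density lower bound; it requires not the bare definition of $\mathbf{A}_\infty$ stated in the paper but one of its self-improved equivalent formulations, and one must take care that the resulting constant $c_2$ depends only on $n$ and $[w]_{\mathbf{A}_\infty}$ --- and through Reifenberg only on the fixed bound $\delta<1/4$ --- rather than on $E$, $F$, or $\varepsilon$.
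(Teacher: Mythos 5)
The paper does not prove this lemma but cites it from Mengesha--Phuc \cite{55MePh2}, whose argument is exactly the stopping-radius/Vitali $5r$-covering scheme you reconstruct, so your approach is essentially the same as the reference's. Your proof is correct, including the two points you flag as delicate: the passage from the paper's stated $\mathbf{A}_\infty$ bound $w(E)\le C(|E|/|B|)^\nu w(B)$ to ``for every $\sigma<1$ there is $\eta<1$'' is a genuine self-improvement (it follows, e.g., via the reverse H\"older inequality, with $\eta$ controlled by $(C,\nu,n)$), and the Reifenberg density bound $|B_\rho(x)\cap\Omega|\ge c_n|B_\rho(x)|$ for $x\in\Omega$, $\rho\lesssim R_0$, $\delta<1/4$ holds by the case split you sketch (either $B_{\rho/2}(x)\subset\Omega$, or one passes to $B_{\rho/2}(y_0)\subset B_\rho(x)$ for $y_0$ the nearest boundary point and uses the flatness inclusion at scale $\rho$), with $c_n$ depending only on $n$ once $\delta<1/4$ is fixed.
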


We are now ready to prove Theorem \ref{5hh23101312}.

\medskip

\noindent \begin{proof}[Proof of Theorem \ref{5hh23101312}]  We shall use some of the ideas in the proofs of \cite[Theorem 1.4]{55Ph2}
and  \cite[Theorem 8.4]{55QH2} (see also \cite[Theorem 3.1]{55QH3}).

Let $\gamma_0$ be as in Lemma  \ref{111120149}  and let $u$ be a renormalized solution of \eqref{5hh070120148}. We first recall from \cite[Theorem 4.1]{11DMOP} that 
	\begin{align*}
	\|\nabla u\|_{L^{\frac{(p-1)n}{n-1},\infty}(\Omega)}\leq C\left[|\mu|(\Omega)\right]^{\frac{1}{p-1}},
	\end{align*}
	which implies  that
	\begin{align}\label{es14}
\left(	\frac{1}{R^n}\int_{\Omega}|\nabla u|^{\gamma}\right)^{1/\gamma}\leq C_\gamma \left[\frac{|\mu|(\Omega)}{R^{n-1}}\right]^{\frac{1}{p-1}}, \qquad \text{with}\quad R=diam(\Omega),
	\end{align}
	for any $\gamma\in \left(0,\frac{(p-1)n}{n-1}\right)$.
	
For $k>0$, let $\mu_0,\lambda_k^+,\lambda_k^-$ be as in Definition \ref{derenormalized}. Let $u_k\in W_0^{1,p}(\Omega)$ be the unique solution of the equation
	\begin{equation*}
	\left\{
	\begin{array}[c]{rcl}
	-\text{div}(A(x,\nabla u_k))&=&\mu_{k} \quad \text{in } \quad\Omega,\\
	{u}_{k}&=&0\quad\text{on } \quad \partial\Omega,\\
	\end{array}
	\right.  
	\end{equation*}
	where we set $\mu_k=\chi_{\{|u|<k\}}\mu_0+\lambda_k^+-\lambda_k^-$. 
	 
Note that  we have  $u_k=T_k(u)$ and $\mu_k\rightarrow \mu$ 
in the narrow topology of measures (see \cite[Remark 2.32]{11DMOP}). Thus,  
	\begin{equation}\label{grad-appr}
	\nabla u_k\to \nabla u  \quad \text{in} \quad L^\gamma(\Omega) \quad\forall \gamma\in \left(0,\frac{(p-1)n}{n-1}\right).
	\end{equation}
	
Let us set $$E_{\lambda,\delta_2}=\{({\bf M}(|\nabla u|^{\gamma_0}))^{1/\gamma_0}>\Lambda_0\lambda, (\mathbf{M}_1(\mu))^{\frac{1}{p-1}}\le \delta_2\lambda \}\cap \Omega,$$ and $$F_\lambda=\{ ({\bf M}(|\nabla u|^{\gamma_0}))^{1/\gamma_0}> \lambda\}\cap \Omega,$$ for $\delta_2\in (0,1)$ and $\lambda>0$. Here $\Lambda_0$ is a constant depending only on $n,p,\gamma_0,\Lambda$ and is to be  chosen  later.  
	Also, let $\{y_i\}_{i=1}^L\subset \Omega$ and a ball $B_0$ with radius $2R$ such that 
	 $$
	 \Omega\subset \bigcup\limits_{i = 1}^L {{B_{r_0}}({y_i})}  \subset {B_0},$$
	 where $r_0=\min\{R_0/1000,R\}$.

	We now claim that
	 \begin{equation}\label{5hh2310131}
	 w(E_{\lambda,\delta_2})\leq \varepsilon w({B_{r_0}}({y_i})) ~~\forall \lambda>0, \forall i=1,2,\dots,L,
	 \end{equation}
	 provided $\delta_2=\delta_2(n,p,\Lambda,\epsilon,[w]_{\mathbf{A}_\infty},R/R_0)>0$ is small enough.

	 Indeed, we may assume that $E_{\lambda,\delta_2}\not=\emptyset$ and thus 
	$$|\mu| (\Omega)\leq R^{n-1}(\delta_2\lambda)^{p-1}.$$
	
	Since  ${\bf M}$ is a bounded operator from $L^1(\mathbb{R}^{n})$ into $L^{1,\infty}(\mathbb{R}^{n})$, in view of \eqref{es14} with $\gamma=\gamma_0$ we find
	 \begin{align*}
	  |E_{\lambda,\delta_2}|&\leq \frac{C}{(\Lambda_0\lambda)^{\gamma_0}}\int_{\Omega}|\nabla u|^{\gamma_0}dx \leq \frac{CR^n}{(\Lambda_0\lambda)^{\gamma_0}} \left[\frac{|\mu|(\Omega)}{R^{n-1}}\right]^{\frac{\gamma_0}{p-1}}.
	 \end{align*}

	 Thus we obtain
\begin{align*}
 |E_{\lambda,\delta_2}|\leq \frac{CR^n}{(\Lambda_0\lambda)^{\gamma_0}} \left[\frac{R^{n-1}(\delta_2\lambda)^{p-1}}{R^{n-1}}\right]^{\frac{\gamma_0}{p-1}}=C\delta_2^{\gamma_0}|B_0|.
\end{align*}
  
Hence using the property of  ${\bf A}_\infty$ weights we have 
	 \begin{align*}
	 	w(E_{\lambda,\delta_2})\leq c\left(\frac{|E_{\lambda,\delta_2}|}{|B_{0}|}\right)^\nu w(B_0)\leq C\delta_2^{\nu\gamma_0} w(B_0),
	 \end{align*}
	 where $(c,\nu)$ is a pair of ${\bf A}_\infty$ constants of $w$. It is known that (see, e.g., \cite{55Gra}) there exist $c_1=c_1(n,c,\nu)$ and $\nu_1=\nu_1(n,c,\nu)$ such that 
	 \begin{equation*}
	 \frac{w(B_0)}{w({B_{r_0}}({y_i}))}\leq c_1\left(\frac{|B_0|}{|{B_{r_0}}({y_i})|}\right)^{\nu_1}~~\forall i=1,2,\dots,L.
	 \end{equation*}

	 Thus we obtain 
	 \begin{align*}
	 w(E_{\lambda,\delta_2})\leq C\delta_2^{\nu\gamma_0} \left(\frac{|B_0|}{|{B_{r_0}}({y_i})|}\right)^{\nu_1} w({B_{r_0}}({y_i}))
	 < \varepsilon w({B_{r_0}}({y_i}))~~\forall i=1,2,\dots,L,
	 \end{align*}
	provided  $\delta_2$ is small enough depending on $n,p,\gamma_0,\epsilon,[w]_{\mathbf{A}_\infty},R/R_0$. This proves  \eqref{5hh2310131}.

	 Next we verify that for all $x\in \Omega$, $r\in (0,2r_0]$, and $\lambda>0$ we have
	 \begin{equation}\label{2nd-check}
	 w(E_{\lambda,\delta_2}\cap B_r(x))\geq \varepsilon w(B_r(x)) \Longrightarrow B_r(x)\cap \Omega\subset F_\lambda,
	 \end{equation}
	 provided $\delta_2$ is small enough depending on $n,p,\Lambda, \gamma_0,\epsilon,[w]_{\mathbf{A}_\infty},R/R_0$ .

	Indeed,  take $x\in \Omega$ and $0<r\leq 2r_0$.
	 By contraposition, assume that $B_r(x)\cap \Omega\cap F^c_\lambda\not= \emptyset$ and $E_{\lambda,\delta_2}\cap B_r(x)\not = \emptyset$ i.e., there exist $x_1,x_2\in B_r(x)\cap \Omega$ such that $\left[{\bf M}(|\nabla u|^{\gamma_0})(x_1)\right]^{1/\gamma_0}\leq \lambda$ and $\mathbf{M}_1(\mu)(x_2)\le (\delta_2 \lambda)^{p-1}$.
	 We need to prove that
	 \begin{equation}\label{5hh2310133}
	 w(E_{\lambda,\delta_2}\cap B_r(x))< \varepsilon w(B_r(x)). 
	 \end{equation}

	 Clearly,
	 \begin{equation*}
	 {\bf M}(|\nabla u|)(y)\leq \max\{\left[{\bf M}\left(\chi_{B_{2r}(x)}|\nabla u|^{\gamma_0}\right)(y)\right]^{\frac{1}{\gamma_0}},3^{n}\lambda\}~~\forall y\in B_r(x).
	 \end{equation*}

	 Therefore, for all $\lambda>0$ and $\Lambda_0\geq 3^{n}$,
	 \begin{eqnarray*}\label{5hh2310134}E_{\lambda,\delta_2}\cap B_r(x)=\{{\bf M}\left(\chi_{B_{2r}(x)}|\nabla u|^{\gamma_0}\right)^{\frac{1}{\gamma_0}}>\Lambda_0\lambda, (\mathbf{M}_{1}(\mu))^{\frac{1}{p-1}}\leq \delta_2\lambda\}\cap \Omega \cap B_r(x).
	 \end{eqnarray*}

	 To prove \eqref{5hh2310133} we separately consider  the case $B_{8r}(x)\subset\subset\Omega$ and the case $B_{8r}(x)\cap\Omega^{c}\not=\emptyset$.
	
	\medskip
	
	 \noindent {\bf 1. The case $B_{8r}(x)\subset\subset\Omega$:} Applying  Proposition \ref{inter} to  $u=u_{k}\in W_{0}^{1,p}(\Omega),\mu=\mu_k$ and $B_{2R}=B_{8r}(x)$, there is a function $v_k\in W^{1,p}(B_{4r}(x))\cap W^{1,\infty}(B_{2r}(x))$ such that for any $\eta>0$, 
	 \begin{align*}
	 \|\nabla v_k\|_{L^\infty(B_{2r}(x))}\leq C \left[\frac{|\mu_k|(B_{8r}(x))}{r^{n-1}}\right]^{\frac{1}{p-1}}+C \left(\fint_{B_{8r}(x)}|\nabla u_k|^{\gamma_0}\right)^{1/\gamma_0},
	 \end{align*}
	 and
	 \begin{align*}
	 &\left(\fint_{B_{4r}}|\nabla u_k-\nabla v_k|^{\gamma_0}dx\right)^{\frac{1}{\gamma_0}}\nonumber\\
        &\qquad \qquad\leq C_\eta \left[\frac{|\mu_k|(B_{8r}(x))}{r^{n-1}}\right]^{\frac{1}{p-1}}+C(([A]_{R_0})^{\kappa} +\eta)\left(\fint_{B_{8r}}|\nabla u_k|^{\gamma_0}\right)^{1/\gamma_0},
	 \end{align*}
	 for some $\kappa\in (0,1)$.
	
	 Using $\left[{\bf M}(|\nabla u|^{\gamma_0})(x_1)\right]^{1/\gamma_0}\leq \lambda$ and  $[\mathbf{M}_1(\mu)(x_2)]^{\frac{1}{p-1}}\le \delta_2 \lambda$  with 
        $x_1,x_2\in B_r(x)$, and property \eqref{grad-appr},
			 we get 
	 \begin{align*}
	 \mathop {\limsup }\limits_{k \to \infty } \|\nabla v_{k}\|_{L^\infty(B_{2r}(x))}&\leq   C \left[\frac{|\mu|(\overline{B_{8r}(x)})}{r^{n-1}}\right]^{\frac{1}{p-1}}+C \left(\fint_{B_{8r}(x)}|\nabla u|^{\gamma_0}\right)^{1/\gamma_0}    \\&\leq   C [\mathbf{M}_1(\mu)(x_2)]^{\frac{1}{p-1}}+C \left[{\bf M}(|\nabla u|^{\gamma_0})(x_1)\right]^{1/\gamma_0}                     
	 \\&\leq C\lambda,  
	 \end{align*}
	 and 
	 \begin{align*}
	 &\mathop {\limsup }\limits_{k \to \infty }  \left(\fint_{B_{4r}(x)}|\nabla u_k-\nabla v_k|^{\gamma_0}dx\right)^{\frac{1}{\gamma_0}}\\
          &\qquad\leq C_\eta \left[\frac{|\mu|(\overline{B_{8r}(x)})}{r^{n-1}}\right]^{\frac{1}{p-1}}+C(([A]_{R_0})^{\kappa} +\eta)      \left(\fint_{B_{8r}(x)}|\nabla u|^{\gamma_0}\right)^{1/\gamma_0} \\
	 &\qquad\leq C_\eta [\mathbf{M}_1(\mu)(x_2)]^{\frac{1}{p-1}}+C(([A]_{R_0})^{\kappa} +\eta) \left[{\bf M}(|\nabla u|^{\gamma_0})(x_1)
         \right]^{1/\gamma_0}            
	 \\&\qquad\leq C\left(C_{\eta}\delta_2+\delta_1^{\kappa}+\eta\right)\lambda.
	 \end{align*}                                                                
	Here also we used that $\mu_k\rightarrow \mu$ in the narrow topology of measures
	and that 	$[A]_{R_0}\leq \delta_1$.

	 Thus there exists $k_0>1$ such that for all $k\geq k_0$ we have  
	 \begin{equation}\label{5hh2310136}
	 \|\nabla v_{k}\|_{L^\infty(B_{2r}(x))}\leq C\lambda,
	 \end{equation}                        
and
	 \begin{equation}\label{5hh2310137} 
	 \left(\fint_{B_{4r}(x)}|\nabla u_k-\nabla v_k|^{\gamma_0}dx\right)^{\frac{1}{\gamma_0}}\leq C\left(C_{\eta}\delta_2+\delta_1^{\kappa}+\eta\right)\lambda.
	 \end{equation}

	  Since 
	  \begin{align*}
	  (	{\bf M}(|\sum_{j=1}^{3}f_j|^{\gamma_0}))^{1/\gamma_0}\leq 3\sum_{j=1}^{3}	({\bf M}(|f_j|^{\gamma_0}))^{1/\gamma_0},
	  \end{align*}       
	   we find
	  \begin{align}\nonumber
	  |E_{\lambda,\delta_2}\cap B_r(x)|&\leq   |\{{\bf M}\left(\chi_{B_{2r}(x)}|\nabla (u_k-v_k)|^{\gamma_0}\right)^{\frac{1}{\gamma_0}}>\Lambda_0\lambda/9\}\cap B_r(x)|
	  \\&\nonumber+ |\{{\bf M}\left(\chi_{B_{2r}(x)}|\nabla (u-u_k)|^{\gamma_0}\right)^{\frac{1}{\gamma_0}}>\Lambda_0\lambda/9\}\cap B_r(x)|\\&+
	  |\{{\bf M}\left(\chi_{B_{2r}(x)}|\nabla v_k|^{\gamma_0}\right)^{\frac{1}{\gamma_0}}>\Lambda_0\lambda/9\}\cap B_r(x)|.         \label{es18}                 
	  \end{align}  

	 In view of \eqref{5hh2310136} we see that for $\Lambda_0\geq \max\{3^{n},10C\}$ ($C$ is the constant in \eqref{5hh2310136}) and $k\geq k_0$, it holds that 
	 \begin{align*}
	 |\{{\bf M}\left(\chi_{B_{2r}(x)}|\nabla v_k|^{\gamma_0}\right)^{\frac{1}{\gamma_0}}>\Lambda_0\lambda/9\}\cap B_r(x)|=0.
	 \end{align*}

Thus, we deduce from \eqref{es18} and \eqref{5hh2310137} that for $k\geq k_0$,
	 \begin{align*}
	 |E_{\lambda,\delta_2}\cap B_r(x)|&\leq   |\{\left[{\bf M}\left(\chi_{B_{2r}(x)}|\nabla (u_k-v_k)|^{\gamma_0}\right)\right]^{\frac{1}{\gamma_0}}>\Lambda_0\lambda/9\}\cap B_r(x)|
	 \\&+ |\{\left[{\bf M}\left(\chi_{B_{2r}(x)}|\nabla (u-u_k)|^{\gamma_0}\right)\right]^{\frac{1}{\gamma_0}}>\Lambda_0\lambda/9\}\cap B_r(x)|\\&\leq \frac{C}{\lambda^{\gamma_0}}    \left[\int_{B_{2r}(x)}  |\nabla (u_k-v_k)|^{\gamma_0}+ \int_{B_{2r}(x)}  |\nabla (u-u_k)|^{\gamma_0} \right]    
	 \\&\leq \frac{C}{\lambda^{\gamma_0}}    \left[\left(C_{\eta}\delta_2+\delta_1^{\kappa}+\eta\right)^{\gamma_0}\lambda^{\gamma_0}r^n+ \int_{B_{2r}(x)}  |\nabla (u-u_k)|^{\gamma_0} \right].
	 \end{align*}

	 At this point, letting $k\to\infty$  we get 
	 \begin{equation*}
	 |E_{\lambda,\delta_2}\cap B_r(x)|\leq C \left(C_{\eta}\delta_2+\delta_1^{\kappa}+\eta\right)^{\gamma_0}|B_r(x)|.
	 \end{equation*}

	 Thus,  
	 \begin{align*}
	 w(E_{\lambda,\delta_2}\cap B_r(x))&\leq c\left(\frac{|E_{\lambda,\delta_2}\cap B_r(x) |}{|B_r(x)|}\right)^\nu w(B_r(x))
	 \\&\leq  c\left(C_{\eta}\delta_2+\delta_1^{\kappa}+\eta\right)^{\gamma_0\nu} w(B_r(x))
	 \\&< \varepsilon w(B_r(x)),
	 \end{align*} 
	 where $\eta,\delta_1\leq C(n,p, \Lambda,\gamma_0,\epsilon,[w]_{\mathbf{A}_\infty})$ and
	$\delta_2\leq C(n,p, \Lambda,\gamma_0,\epsilon,[w]_{\mathbf{A}_\infty}, R/R_0)$.
	
	\medskip
	
	 \noindent {\bf 2. The case $B_{8r}(x)\cap\Omega^{c}\not=\emptyset$:} Let $x_3\in\partial \Omega$ such that $|x_3-x|=\text{dist}(x,\partial\Omega)$.  We have 
	 \begin{equation}\label{5hh2310138}
	 B_{2r}(x)\subset B_{10r}(x_3)\subset B_{100r}(x_3)\subset B_{108r}(x)\subset B_{109r}(x_1),
	 \end{equation}
	 and 
	 \begin{equation}\label{5hh2310139}
	 B_{100r}(x_3)\subset B_{108r}(x)\subset B_{109r}(x_2).
	 \end{equation}

	  Applying  Proposition \ref{boundary} to  $u=u_{k}\in W_{0}^{1,p}(\Omega),\mu=\mu_k$ and $B_{10R}=B_{100r}(x_3)$, 	for any $\eta>0$ there exists $\delta_0=\delta_0(n,p,\Lambda,\eta)$ such that the following holds. If $\Omega$ is a $(\delta_0,R_0)$-Reifenberg flat domain, there exists a function $V_k\in W^{1,\infty}(B_{10r}(x_3))$  such that 
	  \begin{align*}
	  \|\nabla V_k\|_{L^\infty(B_{10r}(x_3))}\leq C \left[\frac{|\mu_k|(B_{100r}(x_3))}{r^{n-1}}\right]^{\frac{1}{p-1}}+C \left(\fint_{B_{100r}(x_3)}|\nabla u_k|^{\gamma_0}\right)^{1/\gamma_0},
	  \end{align*}
and		
	  \begin{align*}\nonumber &
	  \left(\fint_{B_{10r}(x_3)}|\nabla (u_k-V_k)|^{\gamma_0}dx\right)^{\frac{1}{\gamma_0}}\\&\qquad\leq C_\eta \left[\frac{|\mu_k|(B_{100r}(x_3))}{r^{n-1}}\right]^{\frac{1}{p-1}}+C(([A]_{R_0})^{\kappa} +\eta)\left(\fint_{B_{100r}(x_3)}|\nabla u_k|^{\gamma_0}\right)^{1/\gamma_0},
	  \end{align*}
	  for some  $\kappa\in (0,1)$.

	 Since $\left[{\bf M}(|\nabla u|^{\gamma_0})(x_1)\right]^{1/\gamma_0}\leq \lambda$ and  $[\mathbf{M}_1(\mu)(x_2)]^{\frac{1}{p-1}}\le \delta_2 \lambda$  with $x_1,x_2\in B_r(x)$, by \eqref{5hh2310138}, \eqref{5hh2310139}, the fact that $[A]_{R_0}\leq \delta_1$,  and property \eqref{grad-appr},
 we get 
	 \begin{align*}
	 \mathop {\limsup }\limits_{k \to \infty } \|\nabla V_k\|_{L^\infty(B_{2r}(x))}&\leq C \left[\frac{|\mu|(\overline{B_{100r}(x_3)})}{r^{n-1}}\right]^{\frac{1}{p-1}}+C \left(\fint_{B_{100r}(x_3)}|\nabla u|^{\gamma_0}\right)^{1/\gamma_0}
	 \\& \leq C \left[\frac{|\mu|(B_{109r}(x_2))}{r^{n-1}}\right]^{\frac{1}{p-1}}+C \left(\fint_{B_{109r}(x_1)}|\nabla u|^{\gamma_0}\right)^{1/\gamma_0}
	 \\&\leq C\left([\mathbf{M}_1(\mu)(x_2)]^{\frac{1}{p-1}}+\left[{\bf M}(|\nabla u|^{\gamma_0})(x_1)\right]^{1/\gamma_0}\right)
	 \\&\leq C\lambda,
	 \end{align*}
	 and 
	 \begin{align*}
	 &\mathop {\limsup }\limits_{k \to \infty }\left(\fint_{B_{2r}(x)}|\nabla (u_k-V_k)|^{\gamma_0}dx\right)^{\frac{1}{\gamma_0}}\\&~~~~~~~~\leq C_\eta [\mathbf{M}_1(\mu)(x_2)]^{\frac{1}{p-1}}+C(([A]_{R_0})^{\kappa} +\eta)\left[{\bf M}(|\nabla u|^{\gamma_0})(x_1)\right]^{1/\gamma_0}
	 \\&~~~~~~~~\leq C\left( C_\eta \delta_2+\delta_1^{\kappa}+\eta\right)\lambda.
	 \end{align*}

	Thus we can find $k_0>1$ such that for all $k\geq k_0$ we have   
	 \begin{align}\label{5hh23101310}
	 \|\nabla V_k\|_{L^\infty(B_{2r}(x))}\leq C\lambda,
	 \end{align}
	 and 
	 \begin{align}\label{5hh23101311}
	 \left(\fint_{B_{2r}(x)}|\nabla (u_k-V_k)|^{\gamma_0}dx\right)^{\frac{1}{\gamma_0}}\leq C\left( C_\eta \delta_2+\delta_1^{\kappa}+\eta\right)\lambda.
	 \end{align}

	  As in the interior case we also have  for $k\geq k_0$,
			\begin{align*}
	  |E_{\lambda,\delta_2}\cap B_r(x)|&\leq   |\{{\bf M}\left(\chi_{B_{2r}(x)}|\nabla (u_k-v_k)|^{\gamma_0}\right)^{\frac{1}{\gamma_0}}>\Lambda_0\lambda/9\}\cap B_r(x)|
	  \\&\qquad+ |\{{\bf M}\left(\chi_{B_{2r}(x)}|\nabla (u-u_k)|^{\gamma_0}\right)^{\frac{1}{\gamma_0}}>\Lambda_0\lambda/9\}\cap B_r(x)|,
	  \end{align*}                      
	  for a constant $\Lambda_0>1$ depending only on $n,p,\Lambda$. 
	 Therefore, we deduce from \eqref{5hh23101310} and \eqref{5hh23101311} that, for  $k\geq k_0$,
	 \begin{align*}
	 |E_{\lambda,\delta_2}\cap B_r(x)|&\leq  \frac{C}{\lambda^{\gamma_0}}    \left(\int_{B_{2r}(x)}  |\nabla (u_k-v_k)|^{\gamma_0}+ \int_{B_{2r}(x)}  |\nabla (u-u_k)|^{\gamma_0} \right)    
	 \\&\leq \frac{C}{\lambda^{\gamma_0}}    \left(\left(C_{\eta}\delta_2+\delta_1^{\kappa}+\eta\right)^{\gamma_0}\lambda^{\gamma_0}r^n+ \int_{B_{2r}(x)}  |\nabla (u-u_k)|^{\gamma_0} \right).    
	 \end{align*}

Then 	 letting $k\to\infty$  we get 
	\begin{equation*}
	|E_{\lambda,\delta_2}\cap B_r(x)|\leq C \left(C_{\eta}\delta_2+\delta_1^{\kappa}+\eta\right)^{\gamma_0}|B_r(x)|.
	\end{equation*}

	 Thus we have 
        \begin{align*}
	 w(E_{\lambda,\delta_2}\cap B_r(x))&\leq c\left(\frac{|E_{\lambda,\delta_2}\cap B_r(x) |}{|B_r(x)|}\right)^\nu w(B_r(x))
	 \\&\leq  c\left(C_{\eta}\delta_2+\delta_1^{\kappa}+\eta\right)^{\gamma_0\nu} w(B_r(x))
	 \\&< \varepsilon w(B_r(x)).
	 \end{align*} 
	 where $\eta,\delta_1\leq C(n,p,\Lambda,\gamma_0,\varepsilon,[w]_{\mathbf{A}_\infty})$ and 
	$\delta_2\leq C(n,p,\Lambda,\gamma_0,\varepsilon,[w]_{\mathbf{A}_\infty}, R/R_0)$.

With \eqref{5hh2310131} and \eqref{2nd-check} in hand,	we can now apply Lemma \ref{5hhvitali2} with $E= E_{\lambda, \delta_2}$ and $F=F_\lambda$ to complete the proof of the theorem.                         
\end{proof}

\section{Proofs of Theorem \ref{101120143-p} and Corollary \ref{compactness}}\label{sec-4}

In this section we prove Theorem \ref{101120143-p} and Corollary \ref{compactness}.   We mention here that the proof of the weighted Lorentz space bound, Theorem 
 \ref{101120143}, can be done similarly to that of Theorem \ref{101120143-p} and thus will be skipped. 
We now begin with the proof of Theorem \ref{101120143-p} using mainly the good-$\lambda$ type bound
obtained in Theorem  \ref{5hh23101312}.

\medskip
 
\noindent \begin{proof}[Proof of Theorem \ref{101120143-p}] 
	By Theorem \ref{5hh23101312}, for any $\varepsilon>0,R_0>0$ one finds  $\delta=\delta(n,p,\Lambda,\varepsilon,[w]_{{\bf A}_\infty})\in (0,1/2)$, $\delta_2=\delta_2(n,p,\Lambda,\varepsilon,[w]_{{\bf A}_\infty},diam(\Omega)/R_0)\in (0,1)$, and $\Lambda_0=\Lambda_0(n,p,\gamma_0,\Lambda)>1$ such that if $\Omega$ is  a $(\delta,R_0)$-Reifenberg flat domain and $[A]_{R_0}\le \delta$ then 
	\begin{align*}
	& w(\{({\bf M}(|\nabla u|^{\gamma_0}))^{1/\gamma_0}>\Lambda_0\lambda, (\mathbf{M}_1(\mu))^{\frac{1}{p-1}}\le \delta_2\lambda \}\cap \Omega)\\
	& \qquad \qquad \leq C\varepsilon w(\{ ({\bf M}(|\nabla u|^{\gamma_0}))^{1/\gamma_0}> \lambda\}\cap \Omega),
	\end{align*}
	for all $\lambda>0$. Here the constant $\gamma_0$ is as in Lemmas  \ref{111120149},  and the constant $C$  depends only on $n,p,\gamma_0,\Lambda,[w]_{{\bf A}_\infty}$, and $diam(\Omega)/R_0$. 
Thus, as $\Phi$ is invertible with $\Phi^{-1}:[0,\infty)\rightarrow [0,\infty)$, we find 	
\begin{align*}
	&w(\{({\bf M}(|\nabla u|^{\gamma_0}))^{1/\gamma_0}>\Phi^{-1}(t)\}\cap \Omega)\\&~~~\le 	w(\{ (\mathbf{M}_1(\mu))^{\frac{1}{p-1}}> \frac{\delta_2}{\Lambda_0} \Phi^{-1}(t) \}\cap \Omega)+ C\varepsilon w(\{ ({\bf M}(|\nabla u|^{\gamma_0}))^{1/\gamma_0}> \frac{\Phi^{-1}(t)}{\Lambda_0}\}\cap \Omega)
\end{align*}
for all $t>0$. This gives, for any $T>0$,
	\begin{align*}
	&\int_{0}^{T}w(\{x\in\Omega:	\Phi[({\bf M}(|\nabla u|^{\gamma_0}))^{\frac{1}{\gamma_0}}]>t\}) dt\\&\qquad\leq C\varepsilon \int_{0}^{T}w(\{x\in\Omega:	\Phi[\Lambda_0({\bf M}(|\nabla u|^{\gamma_0}))^{\frac{1}{\gamma_0}}]>t\}) dt \\&\qquad\qquad+   \int_{0}^{T} w(\{x\in\Omega: \Phi[\frac{\Lambda_0}{\delta_2}(\mathbf{M}_1(\mu))^{\frac{1}{p-1}}]> t  \})dt.
	\end{align*}

As $\Phi(2t)\leq c\, \Phi(t)$ and $\Phi$ is increasing, this yields
\begin{align*}
	&\int_{0}^{T}w(\{x\in\Omega:	\Phi[({\bf M}(|\nabla u|^{\gamma_0}))^{\frac{1}{\gamma_0}}]>t\}) dt\\&\qquad\leq C\varepsilon \int_{0}^{T}w(\{x\in\Omega:	 H_1\Phi[({\bf M}(|\nabla u|^{\gamma_0}))^{\frac{1}{\gamma_0}}]>t\}) dt \\&\qquad\qquad+   \int_{0}^{T} w(\{x\in\Omega: H_2\Phi[(\mathbf{M}_1(\mu))^{\frac{1}{p-1}}]> t  \})dt,
	\end{align*}
where $H_1=c^{[[\log_{2}(\Lambda_0)]]}$ and  $H_2=c^{[[\log_{2}(\frac{\Lambda_0}{\delta_2})]]}$. Here $[[a]]$ denotes the smallest integer  greater than or equal to $a$.  Thus by simple changes of variables we arrive at 
\begin{align*}
	&\int_{0}^{T}w(\{x\in\Omega:	\Phi[({\bf M}(|\nabla u|^{\gamma_0}))^{\frac{1}{\gamma_0}}]>t\}) dt\\&\qquad\leq H_1 C\varepsilon \int_{0}^{\frac{T}{H_1}}w(\{x\in\Omega:	 \Phi[({\bf M}(|\nabla u|^{\gamma_0}))^{\frac{1}{\gamma_0}}]>s\}) ds \\&\qquad\qquad+   H_2 \int_{0}^{\frac{T}{H_2}} w(\{x\in\Omega: \Phi[(\mathbf{M}_1(\mu))^{\frac{1}{p-1}}]> s  \})ds.
	\end{align*}

Now using $H_1>1$ and letting $\varepsilon=\frac{1}{2H_1 C}$ we can absorb the first term on the right to the left, which yields
\begin{align*}
	&\int_{0}^{T}w(\{x\in\Omega:	\Phi[({\bf M}(|\nabla u|^{\gamma_0}))^{\frac{1}{\gamma_0}}]>t\}) dt\\
	&\qquad \qquad \leq 	2 H_2 \int_{0}^{\frac{T}{H_2}} w(\{x\in\Omega: \Phi[(\mathbf{M}_1(\mu))^{\frac{1}{p-1}}]> s  \})ds.
	\end{align*}

Then sending $T\rightarrow \infty$ in the above bound and recalling that
\begin{align*}
\int_{\Omega}\Phi(|f|) w dx= \int_{0}^{\infty}w(\{x\in\Omega:\Phi(|f(x)|)>t\}) dt,
	\end{align*}	
we deduce 
\begin{equation*}
\int_{\Omega}\Phi[({\bf M}(|\nabla u|^{\gamma_0}))^{\frac{1}{\gamma_0}}] w dx\leq 2H_2 \int_{\Omega}\Phi[(\mathbf{M}_1(\mu))^{\frac{1}{p-1}}] w dx.
\end{equation*}

This yields \eqref{101120144} as desired and  completes the proof of the theorem.
\end{proof}

\medskip

We next  prove Corollary \ref{compactness} which provides a compactness criterion for solution sets of equation \eqref{5hh070120148}:

\medskip
 
\noindent \begin{proof}[Proof of Corollary \ref{compactness}] By de la Vall\'ee-Poussin Lemma on equi-integrability, there exists  an increasing   function 
$G:[0,\infty)\rightarrow [0,\infty)$ with $G(0)=0$ and $$\lim_{t\rightarrow \infty} \frac{G(t)}{t}=\infty,$$
such that 
\begin{equation*}
 \sup_{j} \int_{\Omega} G([{\bf M}_1(|\mu_j|)]^{\frac{q}{p-1}}) w dx \leq C.
\end{equation*}
Moreover, we may assume that $G$ satisfies a moderate growth condition (see \cite{Mey}): there exists $c_1>1$ such that 
$$G(2t)\leq c_1\, G(t)\qquad \forall t\geq 0.$$ 

Then applying Theorem \ref{101120143-p} with $\Phi(t):=G(t^q)$, which is also of moderate growth, we get
     \begin{equation*}
\int_{\Omega}G(|\nabla u_j|^q) w(x)dx \leq C \int_{\Omega} G([\mathbf{M}_1(\mu_j)]^{\frac{q}{p-1}}) w(x)dx\leq C.
    \end{equation*} 

Thus by de la Vall\'ee-Poussin Lemma the set   $\{|\nabla u_j|^q\}_j$ is  also bounded and equi-integrable in   $L^1_{w}(\Omega)$.

On the other hand, from the assumption we have  
\begin{equation}\label{rhsbound}
|\mu_j|(\Omega)\leq C.
\end{equation}
 
By \eqref{rhsbound}, it follows from the proof of Theorem 3.4 in \cite{11DMOP} that there exists a subsequence $\{u_{j'}\}_{j'}$ converging a.e. to  a  function $u$ such that $|u|<\infty$ a.e.,  $T_{k}(u)\in W^{1, p}_0(\Omega)$ for all $k>0$, and  moreover
\begin{equation*}
\nabla u_{j'} \rightarrow \nabla u \quad {\rm a.e.~ in~} \Omega.
\end{equation*}

We can now apply  Vitali Convergence Theorem to obtain the strong convergence \eqref{strong-q-w}. This completes the proof of the corollary. 
\end{proof}  

\section{Existence of solutions to Riccati type equations}\label{sec-5}
 We shall prove Theorem \ref{main-Ric} in this section. To that end, we need some preliminaries. 

\begin{definition} Given  $s >1$ we define the space $M^{1, s}(\Om)$ to be the set of all finite signed measures
$\mu$  in $\Om$ such that the quantity $[\mu]_{M^{1, s}(\Om)}<+\infty$, where
$$[\mu]_{M^{1, s}(\Om)}:=\sup\left\{ |\mu|(K)/{\rm Cap}_{1,\, s}(K): {\rm Cap}_{1, \, s}(K)>0 \right\},$$
with the supremum being taken over all compact sets $K\subset\Omega$.
\end{definition}

Due to the capacitability of Borel sets, $[\mu]_{M^{1, s}(\Om)}$ remains unchanged  in the above definition
even if the  supremum is taken over all Borel sets $K\subset\Om$.

Given a nonnegative locally finite measure $\nu$ in $\RR^n$, we define its 
first order Riesz's potentials  by
\begin{equation*}
{\rm \bf I}_{1}^{\rho}\nu (x)=\int_{0}^{\rho}\frac{\nu(B_{t}(x))}{t^{n-1}} \frac{dt}{t},\qquad x\in \RR^n,
\end{equation*}
where $\rho\in (0, \infty]$. When $\rho=\infty$, we write ${\rm \bf I}_{1}\nu$ instead of ${\rm \bf I}_{1}^{\infty}\nu$ and note that in this case we have 
$${\rm \bf I}_{1}\nu(x)=c(n) \int_{\RR^n} \frac{1}{|x-y|^{n-1}} d\nu(y) ,\qquad x\in \RR^n. $$

Let $M=M(n)\geq 1$ be a constant such that 
\begin{equation}\label{A1pot}
{\bf M}({\bf I}_1(f))(x)   \leq M\, {\bf I}_{1}(f)(x), \qquad  x\in\RR^n, 
\end{equation}
for all $f\in L^1_{\rm loc}(\RR^n)$, $f\geq 0$. Recall that ${\bf M}$ is the Hardy-Littlewood maximal function.
Inequality \eqref{A1pot} follows from an application of Fubini's Theorem and the fact the function $x\mapsto |x|^{1-n}$ is an ${\bf A}_1$ weight. By an ${\bf A}_1$ weight we mean  a nonnegative function $w\in L^{1}_{\rm loc}(\RR^n)$, $w\not\equiv0$, such that   
\begin{equation*}
{\bf M}(w)(x) \leq  C\, w(x), \qquad {\rm a.e.~} x\in\RR^n,
\end{equation*}
for a constant  $C>0$. The least possible value of $C$  will be denoted by $[w]_{{\bf A}_1}$ and is called the ${\bf A}_1$ constant of $w$. It is well-known that ${\bf A}_1\subset {\bf A}_\infty$.

With $R= {\rm diam}(\Om)$ and $q >p- 1$, for each measure $\mu\in M^{1, \frac{q}{q-p+1}}(\Om)$ we  define the set
\[
\begin{split}
E_{1}(\mu):=\Big\{ v\in W_0^{1,q}(\Om): &\int_{\Om} |\nabla v|^q w dx  \leq T_1 \int_{\Om} {\bf I}_1^{2R}(|\mu|)^{\frac{q}{p-1}} wdx \\ 
 & \text{for all } w\in {\bf A}_1\cap L^\infty(\Om) \text{ such that } [w]_{{\bf A}_1}\leq M \Big\}.
\end{split}
\]
Here $T_1>0$ is to be determined. Now if $q\geq 1$, then under the strong topology of $W_0^{1,q}(\Om)$, we have that $E_1(\mu)$ is closed and convex. Note that by \cite[Corollary 2.5]{Ph1} (see also \cite[Theorem 1.2]{MV}) if $\mu\in M^{1, \frac{q}{q-p+1}}(\Om)$
then so is ${\bf I}_1^{2R}(|\mu|)^{\frac{q}{p-1}}$ with 
\begin{equation}\label{MaVer}
[{\bf I}_1^{2R}(|\mu|)^{\frac{q}{p-1}}]_{M^{1,\frac{q}{q-p+1}}} \leq C(n,p,q,R)\, [\mu]_{M^{1,\frac{q}{q-p+1}}}^{\frac{q}{p-1}}.
\end{equation}

We will need the following lemma.
\begin{lemma}\label{pwforv-lem}
Let $\frac{3n-2}{2n-1}<p\leq 2-\frac{1}{n}$, $q>p- 1$, $\mu\in M^{1, \frac{q}{q-p+1}}(\Om)$. Let  $E_{1}(\mu)$  and $T_1>0$ be as above. Then for any $v\in E_1(\mu)$ we have 
\begin{equation}\label{pwforv}
{\bf I}^{2R}_1(|\nabla v|^q \chi_\Om)(x) \leq C_1\, T_1\, [\mu]^{\frac{q-p+1}{p-1}}_{M^{1,\frac{q}{q-p+1}}(\Om)}\  {\bf I}_{1}^{2R}(|\mu|)(x),
\end{equation}
for a.e. $x\in\Om$ and  $R= {\rm diam}(\Om)$. Here $C_1$ depends only on $n, p,$ and $q$.
\end{lemma}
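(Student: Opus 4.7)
\textbf{Proof plan for Lemma \ref{pwforv-lem}.} The plan is to exploit the definition of the set $E_1(\mu)$ by choosing, for each fixed base point $x\in\Om$, a weight $w=w_x$ whose Riesz-potential flavor converts the weighted integrals appearing in the definition of $E_1(\mu)$ into truncated first-order Riesz potentials centered at $x$. The natural candidate is
\[
w_x(y):=c_n\,|x-y|^{1-n},
\]
which one easily checks by a two-case computation (separating $r\le |x-y_0|/2$ from $r>|x-y_0|/2$) is an $\mathbf{A}_1$ weight with constant depending only on $n$. Moreover, regarding $w_x$ as the Riesz potential ${\bf I}_1\delta_x$ and approximating $\delta_x$ by nonnegative $L^1_{\rm loc}$ functions, the assumption \eqref{A1pot} forces $[w_x]_{\mathbf{A}_1}\le M$. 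Since $w_x\notin L^\infty(\Om)$, I would work instead with the truncation $w_x^{(k)}(y):=\min(w_x(y),k)\in L^\infty(\Om)$; the identity ${\bf M}(\min(f,k))\le\min({\bf M}(f),k)$ together with $M\ge 1$ shows $[w_x^{(k)}]_{\mathbf{A}_1}\le M$, so $w_x^{(k)}$ is admissible in the definition of $E_1(\mu)$.

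Plugging $w=w_x^{(k)}$ into the defining inequality of $E_1(\mu)$ and letting $k\to\infty$ by monotone convergence yields
\[
\int_\Om |\nabla v(y)|^q\,|x-y|^{1-n}\,dy\;\le\;T_1\int_\Om \bigl({\bf I}_1^{2R}(|\mu|)(y)\bigr)^{\frac{q}{p-1}}|x-y|^{1-n}\,dy.
\]
The next step is a Fubini computation based on
\[
|x-y|^{1-n}=(n-1)\int_0^\infty \chi_{B_t(x)}(y)\,t^{-n}\,dt,
\]
which gives, for any nonnegative $f$ supported in $\Om$ and any $x\in\Om$, the two-sided equivalence
\[
\int_\Om f(y)\,|x-y|^{1-n}\,dy\;\asymp\;{\bf I}_1^{2R}(f\chi_\Om)(x),
\]
with constants depending only on $n$ (here I use that $\Om\subset B_R(x)$ since $R=\mathrm{diam}(\Om)$, so that $B_t(x)\cap\Om=\Om$ for $t\ge R$ and the tail contribution from $t\ge 2R$ is dominated by the integral over $[R,2R]$). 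Combining this equivalence applied to $f=|\nabla v|^q$ on the left and to $f=({\bf I}_1^{2R}(|\mu|))^{q/(p-1)}$ on the right, I obtain
\[
{\bf I}_1^{2R}(|\nabla v|^q\chi_\Om)(x)\;\le\;C\,T_1\,{\bf I}_1^{2R}\!\Bigl(\bigl({\bf I}_1^{2R}(|\mu|)\bigr)^{\frac{q}{p-1}}\chi_\Om\Bigr)(x).
\]

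To finish, I need the iterated Riesz-potential estimate: with $s=\tfrac{q}{q-p+1}$, $s'=\tfrac{q}{p-1}$, $s'-1=\tfrac{q-p+1}{p-1}$, one has
\[
{\bf I}_1^{2R}\!\Bigl(\bigl({\bf I}_1^{2R}(|\mu|)\bigr)^{s'}\chi_\Om\Bigr)(x)\;\le\;C\,[\mu]^{s'-1}_{M^{1,s}(\Om)}\,{\bf I}_1^{2R}(|\mu|)(x).
\]
This is the Maz\cprime ya--Verbitsky type capacity-potential bound; combined with \eqref{MaVer}, it reflects the fact that integrating a power $s'$ of a Riesz potential against the kernel $|x-y|^{1-n}$ costs a capacitary factor $[\mu]^{s'-1}_{M^{1,s}}$ but otherwise behaves like a single Riesz potential. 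I would either cite it from \cite{MV} (or \cite[Corollary~2.5]{Ph1}) or prove it by splitting $\mu$ into the local piece on $B_{2|x-y|}(y)$ and a far piece; the far piece is pointwise controlled by ${\bf I}_1^{2R}(|\mu|)(x)$, while the local piece is handled by the capacitary trace inequality. Inserting this estimate into the previous display delivers \eqref{pwforv} with $C_1$ depending only on $n,p,q$.

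\textbf{Main obstacle.} The truly nonelementary ingredient is the iterated potential estimate in the last step; this is where the capacity hypothesis $[\mu]_{M^{1,q/(q-p+1)}}<\infty$ is actually used, and it rests on a nonlinear potential-theoretic splitting. The preceding Fubini/weight manipulations are technical but routine once the weight $w_x=c_n|x-y|^{1-n}$ is identified as the correct $\mathbf{A}_1$ weight tying the definition of $E_1(\mu)$ to pointwise Riesz-potential estimates.
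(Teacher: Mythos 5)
Your proposal is correct and takes essentially the same approach as the paper: both exploit the $\mathbf{A}_1$ weight of Riesz-potential type to turn the defining integral inequality of $E_1(\mu)$ into a pointwise truncated-potential bound, and both then invoke the Maz$'$ya--Verbitsky iterated potential estimate (inequality (2.10) of \cite{Ph1}) to absorb the composition ${\bf I}_1^{2R}\bigl[({\bf I}_1^{2R}|\mu|)^{q/(p-1)}\bigr]$ into a single ${\bf I}_1^{2R}(|\mu|)$ at the cost of the capacitary factor. The only cosmetic difference is that the paper tests against arbitrary nonnegative compactly supported $f\in L^\infty$ and uses Fubini/duality of the Riesz kernel to move ${\bf I}_1$ across, whereas you fix the base point $x$ and plug in the truncated kernel $\min\bigl(c_n|x-\cdot|^{1-n},k\bigr)$ directly; the two are dual formulations of the same idea.
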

\begin{proof}
For any $v\in E_1(\mu)$, by \eqref{A1pot} we have 
$$\int_{\RR^n}|\nabla v|^q \chi_\Om {\bf I}_1(f) dx \leq T_1 \int_{\RR^n} {\bf I}^{2R}_1(|\mu|)^{\frac{q}{p-1}}\chi_\Om {\bf I}_1(f) dx,$$
for any $f\in L^\infty(\RR^n), f\geq 0$, with compact support. Hence by Fubini's  Theorem, 
$$\int_{\RR^n} {\bf I}_1(|\nabla v|^q \chi_\Om) f dx \leq T_1 \int_{\RR^n} {\bf I}_1[{\bf I}^{2R}_1(|\mu|)^{\frac{q}{p-1}}\chi_\Om]f dx,$$
which yields
$${\bf I}_1(|\nabla v|^q \chi_\Om) \leq T_1\, {\bf I}_1[{\bf I}^{2R}_1(|\mu|)^{\frac{q}{p-1}} \chi_{\Om}]
\leq C\, T_1\, {\bf I}_1^{2R}[{\bf I}^{2R}_1(|\mu|)^{\frac{q}{p-1}} \chi_{\Om}]\quad {\rm a.e. ~in~} \Om.$$

On the other hand, by inequality (2.10) of \cite{Ph1} with $\nu=|\mu|, \rho=4R, \alpha=1/2, p=2$ and with $q$ replaced by $\frac{q}{p-1}>1$ 
we find 
\begin{equation*}
{\bf I}_{1}^{2R}[{\bf I}_{1}^{2R}(|\mu|)^{\frac{q}{p-1}}](x)\leq C [\mu]^{\frac{q-p+1}{p-1}}_{M^{1,\frac{q}{q-p+1}}(\Om)}\  {\bf I}_{1}^{4R}(|\mu|)(x)\leq C [\mu]^{\frac{q-p+1}{p-1}}_{M^{1,\frac{q}{q-p+1}}(\Om)}\  {\bf I}_{1}^{2R}(|\mu|)(x).
\end{equation*}
for a.e. $x\in\Om$. Thus  for a.e. $x\in\Om$ we have \eqref{pwforv} as desired.
\end{proof}

\medskip
We are now ready to prove Theorem \ref{main-Ric}.

\medskip

\noindent \begin{proof}[Proof of Theorem \ref{main-Ric}]
Since $q\geq1$, we have $q>p-1$. First we assume that $\mu\in \mathfrak{M}_0(\Om)$ and  let $S: E_1(\mu)\rightarrow W_0^{1,q}(\Om)$ be defined by 
$S(v)=u$ where $u\in W_0^{1,q}(\Om)$ is the unique renormalized  solution of
\begin{eqnarray*}
\left\{\begin{array}{rcl}
-{\rm div}(A(x, \nabla u)) &=& |\nabla v|^q + \mu \quad {\rm in}~ \Om,\\
u&=&0\quad {\rm on}~ \partial\Om.
\end{array}
\right.
\end{eqnarray*}

We  claim that we can find $T_1>0$ and $c_0>0$ such that  if (1.7) holds with $c_0$ then
\begin{equation}\label{claim-self}
S: E_1(\mu) \rightarrow E_1(\mu).
\end{equation}
 Indeed for any weight $w\in {\bf A}_1\cap L^\infty(\Om)$ with $[w]_{{\bf A}_1}\leq M$ and for any $v\in E_1(\mu)$, by Theorem \ref{101120143-p} 
there exists $\delta=\delta(n, p, \Lambda, q)\in (0, 1)$ such that if  $[A]_{R_0}\leq \delta$ and $\Om$ is $(\delta, R_0)$-Reifenberg flat for some $R_0>0$, then
we have
\begin{eqnarray*}
\int_{\Om} |\nabla S(v)|^q w dx \leq N\int_\Om \Big[{\bf I}_1^{2R}(|\nabla v|^q\chi_{\Omega} +|\mu|)\Big]^{\frac{q}{p-1}} w dx.
\end{eqnarray*}
Here $N>0$  depends only on $n, p, q, \Lambda,$ and  $R/R_{0}$. We used the elementary fact that, for any $\nu\in \mathfrak{M}_b(\Omega)$, 
\begin{equation}\label{MI}
{\bf M}_1(|\nu|)\leq C\, {\bf I}_1^{2R}(|\nu|) \quad \text{a.e. in } \Omega.
\end{equation}
 
Thus by 
Lemma \ref{pwforv-lem} we find
\begin{equation*}
\int_{\Om} |\nabla S(v)|^q w dx \leq N \Big[C_1 T_1 [\mu]^{\frac{q-p+1}{p-1}}_{M^{1,\frac{q}{q-p+1}}(\Om)}  +1\Big]^{\frac{q}{p-1}}\int_\Om {\bf I}_1^{2R}(|\mu|)^{\frac{q}{p-1}} w dx.
\end{equation*}

We now choose $T_1=2 N$ and  $c_0$ such that 
\begin{equation}\label{ci1}
0<c_0 \leq c_1:= [(2^{\frac{p-1}{q}}-1)/(2 N C_1)]^{\frac{p-1}{q-p+1}}.
\end{equation}
Then it follows that if condition (1.7) holds then
\begin{equation*}
\int_{\Om} |\nabla S(v)|^q w dx \leq T_1 \int_\Om {\bf I}_1^{2R}(|\mu|)^{\frac{q}{p-1}} w dx.
\end{equation*}
 
This gives \eqref{claim-self} for this choice of $T_1$ and $c_0$.

We next  show the continuity of $S$  on  $E_1(\mu)$. Let $\{v_k\}$ be a sequence in $E_1(\mu)$ such that $v_k$ converges strongly in  $W_{0}^{1, q}(\Om)$
to a function $v\in E_1(\mu)$. Set $u_k=S(v_k)$. We have
\begin{eqnarray*} 
\left\{\begin{array}{rcl}
-{\rm div}(A(x, \nabla u_k)) &=& |\nabla v_k|^q + \mu \quad {\rm in}~ \Om,\\
u_k&=&0\quad {\rm on}~ \partial\Om.
\end{array}
\right.
\end{eqnarray*}

Now it follows from Lemma \ref{pwforv-lem} that 
\begin{equation*}
 {\bf I}^{2R}_1(|\nabla v_k|^q\chi_{\Om} +|\mu|)(x) \leq C\, {\bf I}^{2R}_1(|\mu|)(x) \quad {\rm a.e.~} x\in\Om.
\end{equation*}
Thus by \eqref{MI} and Corollary \ref{compactness}
there exist a subsequence 
$\{u_{k'}\}_{k'}$ and a finite a.e. function $u$ with the property that $T_s(u)\in W^{1,p}_0(\Omega)$ for all $s>0$, $u_{k'}\rightarrow u$ a.e., and 
\begin{equation}\label{stronguk'u}
\nabla u_{k'} \rightarrow \nabla u \quad \text{strongly in} \quad L^q(\Omega, \mathbb{R}^n).
\end{equation}

Since $u_k\in W^{1,q}_0(\Om)$, we also have $u \in W^{1,q}_0(\Om)$. Moreover, as $|\nabla v_k|^q \rightarrow |\nabla v|^q$ in $L^1(\Om)$, by the stability result in  [8, Theorem 3.4], we have   $u=S(v)\in W^{1,q}_0(\Om)$. 
Thus the limit  in \eqref{stronguk'u} is independent of the subsequence, which implies that 
the whole sequence  $u_{k}\rightarrow u$ strongly in   $W^{1,q}_0(\Om)$. This proves the continuity of $S$.

Similarly, using Corollary \ref{compactness} we can show that $S(E_1(\mu))$ is precompact under the strong topology of $W_{0}^{1, q}(\Om)$.

At this point, we can apply Schauder Fixed Point Theorem to obtain a renormalized solution $u\in E_1(\mu)$ to equation 
\eqref{Riccati}. Moreover, by Lemma \ref{pwforv-lem},  Theorem \ref{101120143-p}, and \eqref{MI},  we have 
\begin{eqnarray*}
\int_{\Om} |\nabla u|^q w dx \leq C \int_\Om {\bf I}_1^{2R}(|\mu|)^{\frac{q}{p-1}} w dx
\end{eqnarray*}
for all weights $w\in {\bf A}_{\infty}$.Thus by \eqref{MaVer} and \cite[Lemma 3.1]{MV}, we obtain
\begin{equation*} 
[|\nabla u|^q]_{M^{1, \frac{q}{q-p+1}}(\Om)} \leq  C [\mu]_{M^{1, \frac{q}{q-p+1}}(\Om)}^{\frac{q}{p-1}}\leq C.
\end{equation*}

This completes the proof of Theorem 1.6 in the case  $\mu\in \mathfrak{M}_0(\Om)$.

We now  remove  the assumption $\mu\in \mathfrak{M}_0(\Om)$. Recall that $\mu=\mu_0+\mu_s$ where
$\mu_0\in \mathfrak{M}_{0}(\Om)$ and  $\mu_s\in \mathfrak{M}_{s}(\Om)$. Let $\mu_k=\mu_0 +\rho_k*\mu_s$, where $\{\rho_k\}_{k>0}$ is a standard sequence of mollifiers. Then by \cite[Lemma 5.7]{55Ph2}, we have $\mu_k \in \mathfrak{M}_{0}(\Om)\cap
M^{1,\frac{q}{q-p+1}}(\Om)$ with
$$[\mu_k]_{M^{1,\frac{q}{q-p+1}}(\Om)} \leq B\, [\mu]_{M^{1,\frac{q}{q-p+1}}(\Om)}$$
for some $B>1$. Thus if we further restrict $c_0$ so that $B\,c_0 \leq c_1$, where  $c_1$ is defined in \eqref{ci1}, then
 we have   
$$[\mu_k]_{M^{1,\frac{q}{q-p+1}}(\Om)} \leq c_1.$$

This allows us to apply the above result: for each $k>0$ there exists a renormalized solution $u_k\in E_1(\mu_k)$ to the equation
\begin{eqnarray*}
\left\{\begin{array}{rcl}
-{\rm div}(A(x, \nabla u_k)) &=& |\nabla u_k|^q + \mu_k \quad {\rm in}~ \Om,\\
u_k&=&0\quad {\rm on}~ \partial\Om,
\end{array}
\right.
\end{eqnarray*}
 such that 
\begin{equation*}
[|\nabla u_k|^q]_{M^{1, \frac{q}{q-p+1}}(\Om)} \leq  C 
\end{equation*}
and
\begin{equation*}
{\bf I}^{2R}_1(|\nabla u_k|^q \chi_{\Om} +|\mu_k|)(x) \leq C \  {\bf I}_{1}^{2R}(|\mu_k|)(x) \qquad {\rm a.e.~} x\in\Om.
\end{equation*}

Thus by \eqref{MI},
\begin{equation*}
{\bf M}_1(|\nabla u_k|^q \chi_{\Om} +|\mu_k|)(x) \leq C \  {\bf I}_{1}^{2R}(|\mu_k|)(x) \qquad {\rm a.e.~} x\in\Om.
\end{equation*}

Now observe that we have   
\begin{align*}
  {\bf I}_{1}^{2R}(|\mu_k|) &\leq {\bf I}_1^{2R}(|\mu_0|)+ {\bf I}_1^{2R}(\rho_k*|\mu_s|)\\ 
	&= {\bf I}_1^{2R}(|\mu_0|)+\rho_k* ({\bf I}_1^{2R}(|\mu_s|)) \\
	&\leq 	2{\bf M} ({\bf I}_1^{2R}(|\mu|)) \qquad {\rm a.e.~} x\in\Om,
\end{align*}
where  ${\bf M}$ is the Hardy-Littlewood maximal function.

Thus, since ${\bf M} ({\bf I}_1^{2R}(|\mu|))\in L^{\frac{q}{p-1}}(\Om)$, we see that 
the set $\{{\bf M}_1(|\nabla u_k|^q +\mu_k)^{\frac{q}{p-1}}\}$ is equi-integrable in $L^1(\Om)$. Then by Corollary \ref{compactness}
and the stability result of [8, Theorem 3.4], there exists a subsequence $\{u_{k'}\}$ converging a.e. and strongly in 
$W^{1, q}_0(\Om)$ to  a function $u\in W^{1, q}_0(\Om)$ such that $u$ solves \eqref{Riccati}. This completes the proof of the theorem.
\end{proof}\medskip\\

\noindent {\large \bf Acknowledgment:}

\noindent  Q.-H. Nguyen  is supported by  the Centro De Giorgi, Scuola Normale Superiore, Pisa, Italy. 
N. C. Phuc is supported in part by Simons Foundation, award number 426071.

\end{document}